\newtheorem{thm}{Theorem}[section]
\newtheorem{definition}{Definition}[section]
\newtheorem{lemma}[thm]{Lemma}
\newtheorem{remark}{Remark}[section]
\newtheorem{theorem}[thm]{Theorem}
\numberwithin{equation}{section}
\title{\textbf{  A time-periodic  competition model with nonlocal dispersal and bistable nonlinearity: propagation dynamics  and stability  }}
\author{Manjun Ma\footnote{Department of Mathematics, School of Science, Zhejiang Sci-Tech University, Hangzhou, Zhejiang, 310018, China. Email:\texttt{mjunm9@zstu.edu.cn}.}, \  \ \
 	   Wentao Meng\footnote{Department of Mathematics, School of Science, Zhejiang Sci-Tech University, Hangzhou, Zhejiang, 310018, China. Email: \texttt{202020102047@mails.zstu.edu.cn}.  }, \  \  \
  Chunhua Ou \footnote{Corresponding author. Department of Mathematics and Statistics, Memorial University of Newfoundland, St. John's, Newfoundland, Canada, A1C 5S7. Email: \texttt{ou@mun.ca.}},  \,\,\,
and \,\,\,
	   Jiajun Yue\footnote{Department of Mathematics, School of Science, Zhejiang Sci-Tech University, Hangzhou, Zhejiang, 310018, China. Email: \texttt{yjjminer@sina.com}.  }
}
\date{}
\begin{document}
	\maketitle
\begin{quote}
\textbf{Abstract}: Seasonality frequently occurs in population models,  and the corresponding seasonal patterns have been of   great interest to scientists.  This paper is  concerned with  traveling waves to a time-periodic bistable Lotka-Volterra competition system with nonlocal dispersal. We first establish the existence, uniqueness  and stability of traveling wave solutions for this system. Then, by utilizing  comparison principle and the stability property, the relationship among  the bistable wave speed,   the asymptotic propagation speeds of the associated monotone subsystems  and the speed of upper/lower solutions is obtained.  Next, explicit sufficient conditions for positive and negative bistable wave speeds are derived.  Our explicit results are derived by constructing  particular upper/lower solutions with specific  asymptotical behaviors,  which can be seen as case studies  shedding light on further studies and improvements. Finally, the theoretical results are corroborated under weak conditions by direct simulations of the underlying time-periodic system with nonlocal dispersal. The combined impact of competition, dispersal and seasonality on the invasion direction has shed new light on  the modelings and analysis of population competition and species invasion in heterogeneous media.

\vspace{0.2in}
\indent \textbf{Keywords}: Bistable traveling wave,  existence, stability, Lotka-Volterra competition model,   nonlocal dispersal, invasion direction.

\indent \textbf{2000 Mathematics Subject Classification}. Primary 35K57,
 35C07,37C65, 92D25.
\end{quote}
	%
	
\section{Introduction}\label{sec1}	
In this   paper we are concerned with traveling wave solutions to the following nonlocal dispersal system
\begin{equation}\label{original_model}
\left\{
\begin{array}{lr}
u_{t}=d_{1}(t)\,[J_{1}\ast u-u]+u(r_{1}(t)-a_{1}(t)u-b_{1}(t)v),\\
v_{t}=d_{2}(t)\,[J_{2}\ast v-v]+v(r_{2}(t)-a_{2}(t)u-b_{2}(t)v),
\end{array}
\text{$ x \in \mathbb{R},\, \,t>0,$}
\right.
\end{equation}
where $u=u(x,t)$ and $v=v(x,t)$ stand for the densities of two competitive species  at position $x$ at time $t;$ the functions $d_i(t)$ are the dispersal coefficients, $r_i(t)$ are the net birth rates or resource strength, $r_i(t)/a_i(t)$ are called the carrying capacities, $b_i(t)/r_i(t)$ are the competition coefficients, $J_i$ represent the kernel functions, \,\,$i=1,2.$ The convolution $J_{i}\ast \omega(x,t)$ means
$$
J_{i}\ast \omega(x,t)= \int_{\mathbb{R}}J_{i}(x-y)\omega(y,t) \,dy
$$
for any continuous function $\omega(x,t)$. Moveover, we assume that  all the coefficients are continuous positive T-periodic functions and satisfy the bistable nonlinearity
\begin{equation}
 \int_0 ^T a_1(t)p(t)-b_{1}(t)q(t) \ dt <0, \,\,\, \int_0 ^T b_2(t)q(t)-a_{2}(t)p(t) \ dt <0, \label{A2}
\end{equation}
where
\begin{equation}\label{p(t)}
\left\{
\begin{array}{lr}
p(t)=\dfrac{p_0 e^{\int_0 ^t r_1(s)ds}}{1+p_0 \int_0 ^t e^{\int_0 ^s r_1(\tau)d\tau}a_1(s)ds },\, \,\,\, p_0=\dfrac{e^{\int_0 ^T r_1(s)ds}-1}{\int_0 ^T e^{\int_0 ^s r_1(\tau)d\tau}a_1(s)ds }>0,\\[2mm]
q(t)=\dfrac{q_0 e^{\int_0 ^t r_2(s)ds}}{1+q_0 \int_0 ^t e^{\int_0 ^s r_2(\tau)d\tau}b_2(s)ds },\, \, \,\, q_0=\dfrac{e^{\int_0 ^T r_2(s)ds}-1}{\int_0 ^T e^{\int_0 ^s r_2(\tau)d\tau}b_2(s)ds }>0.
\end{array}
\right.
\end{equation}

Competition among species is an eternal topic in nature. In recent years, Lotka-Volterra type systems with nonlocal dispersal have been frequently applied  to describe the dynamic interactions between two competing  species, see e.g.,  \cite{Bao2,GB,GB2,FD,YJ,JF,YZ,XS,XH}. Among them,
 Yu and Yuan in \cite{YZ} established the existence of traveling wave solutions to a nonlocal dispersal competitive-cooperative system by using the Schauder's fixed-point theorem and a cross-iteration technique.  Li and Lin \cite{XS} proved the existence of traveling wavefronts in a nonlocal dispersal cooperative system with delays by the method of upper and lower solutions. Bao, Li and Shen \cite{Bao2} investigated  the existence and non-existence of space-periodic traveling wave solutions of a competition system with nonlocal dispersal and space-periodic coefficients by means of the monotone semiflow theory. When all the coefficients are constant, the system in (\ref{original_model})  becomes
\begin{equation}\label{constant_model}
\left\{
\begin{array}{lr}
u_{t}=d_{1}\,[J_{1}\ast u-u]+u(r_{1}-a_{1}u-b_{1}v),\\
v_{t}=d_{2}\,[J_{2}\ast v-v]+v(r_{2}-a_{2}u-b_{2}v),
\end{array}
\right.
x \in \mathbb{R},\,t>0,
\end{equation}
which has been studied in \cite{GB,JF,SP}. Zhang, Ma and Li  in \cite{GB} showed that the bistable traveling waves with nonzero speed are strictly monotone.
Pan and Lin \cite{SP} proved the existence of the traveling waves  of system (\ref{constant_model}) with speed $c>c^*$ for a critical speed $c^*$, by constructing upper and lower solutions and using a limiting  process. Fang and Zhao \cite{JF} showed that the minimal wave speed must be   the spreading speed,  when monostable-nonlinearity is assumed. Dynamics for related diffusive Lotka-Velterra competitive models have been extensively studied  in \cite{AO_2018_2,Conley,Gardner,Girardin,GY,Hosono_1989,Hosono_1998,Kan-on_1995,Kan-on_1997,Ma_2019}.

In this paper, we further study  traveling wave solutions of Lotka-Volterra competition model (\ref{original_model}) when seasonality is coupled with nonlocal dispersal.   Throughout this paper, we will use the notation
$$\overline{f}=\frac{1}{T}\int_0 ^T f(t) \,dt$$
to denote the average value of a function on the interval $[0,T]$ and always assume the following:\\
(\textbf{A1}) $J_{i}$ is nonnegative and Lebesgue measurable for each $i;$ \\
(\textbf{A2}) For any $\lambda \in \mathbb{R},$ $\int_{\mathbb{R}}J_{i}(x)e^{-\lambda x}\,dx<\infty;$\\
(\textbf{A3}) $\int_{\mathbb{R}}J_{i}(x)dx=1,~x\in \mathbb{R}.$

Under the condition (\ref{A2}), the corresponding kinetic system of (\ref{original_model})
\begin{equation}\label{kinetic_model}
\left\{
\begin{array}{lr}
u_{t}=u(r_{1}(t)-a_{1}(t)u-b_{1}(t)v),\\
v_{t}=v(r_{2}(t)-a_{2}(t)u-b_{2}(t)v)
\end{array}
\right.
\end{equation}
has three nonnegative $T$-period  solutions  $(0,0)$, $(p(t),0)$, $(0,q(t))$ and at least  one coexistence solution $(u^*(t),v^*(t))$, where $p(t)$ and $q(t)$ are explicitly given by (\ref{p(t)}) { and satisfy  $0<u^*(t)<p(t)$, $0<v^*(t)<q(t)$ for all $t\in\mathbb{R}^+$; it  further follows  that  the two semitrivial periodic solutions $(p(t),0)$ and $(0,q(t))$ are stable, and  $(0,0)$ is unstable.  The uniqueness and the linear unstability of the coexistence solution $(u^*(t),v^*(t))$ is assured by a strong  condition}
 \begin{equation}\label{1.12a}
\overline{r}_1<\min_{0\leq t\leq T}\left(\frac{b_1(t)}{b_2(t)}\right)\,\overline{r}_2, \,\,\, \overline{r}_2<\min_{0\leq t\leq T} \left(\frac{a_2(t)}{a_1(t)}\right) \, \overline{r}_1,
\end{equation}
see e.g. \cite{Bao}.   A detailed argument  of the above results can be found in \cite{Bao}.

To study the time-periodic traveling wave of (\ref{original_model}) connecting $(0,q(t))$ to $(p(t),0),$  we set
\[
\phi(x,t)=\dfrac{u(x,t)}{p(t)}\,\qquad\text{and}\,\qquad\psi(x,t)=\dfrac{q(t)-v(x,t)}{q(t)},
\]
which leads to a cooperative system of the form
\begin{equation}\label{coo sys}
\left\{
\begin{array}{lr}
\phi_{t}=d_{1}(t)\left(\int_\mathbb{R} J_{1}(y)\phi(x-y,t) dy -\phi\right)+\phi\left[\,a_{1}(t)\,p(t)\,(1-\phi)-b_{1}(t)\,q(t)\,(1-\psi)\,\right], \\[2mm]
\psi_{t}=d_{2}(t)\left(\int_\mathbb{R} J_{2}(y)\psi(x-y,t) dy -\psi\right)+(1-\psi)\left[\, a_{2}(t)\,p(t)\,\phi-b_{2}(t)\,q(t)\,\psi\,\right],\\[2mm]
(\phi(x,0),\psi(x,0))=(\phi_{0},\psi_{0}),
\end{array}
\right.
\end{equation}
where
\[
\phi_{0}=\dfrac{u(x,0)}{p(0)}\, \,\text{and}\,\,\psi_0=\dfrac{q(0)-v(x,0)}{q(0)} \,\, \text{{are nonnegative real functions}.}
\]
Under this setting,  the trivial solution $(0, 0)$ of the system (\ref{original_model}) becomes
$\alpha_{1}=(0,1),$  while the other three  solutions $(p(t),0)$,  $(0,q(t))$ and $(u^*(t),v^*(t))$ becomes $\beta=(1,1),$ $\textbf{o}=(0,0)$ and a positive solution  $(\hat{\phi}(t),\hat{\psi}(t)),$ respectively.  Therefore, studying the traveling wave connecting $(0,q(t))$ to $(p(t),0)$ is equivalent to the study of the traveling wave of (\ref{coo sys}) from $\textbf{o}=(0,0)$ to $\beta=(1,1).$
Here a traveling wave solution of (\ref{coo sys}) is a translation invariant solution of the form
\begin{equation}\label{def}
\phi(x,t)=\Phi(z,t),\qquad \psi(x,t)=\Psi(z,t),\qquad  z=x+ct,
\end{equation}
where $c$ is the bistable wave speed. Thus, $(\Phi(z,t),\Psi(z,t))$ must satisfy the following wave profile system
\begin{equation}\label{wave sys}
\begin{cases}
\Phi_{t}=d_{1}(t)\left(\int_\mathbb{R} J_{1}(y)\Phi(z-y,t) dy -\Phi\right)-{c}{\Phi}_{z}+{\Phi}\left[a_{1}(t)p(t)(1-{\Phi})-b_{1}(t)q(t)(1-{\Psi})\right],\\
\Psi_{t}=d_{2}(t)\left(\int_\mathbb{R }J_{2}(y)\Psi(z-y,t) dy -\Psi\right)-{c}{\Psi}_z+(1-{\Psi})\left[a_{2}(t)p(t){\Phi}-b_{2}(t)q(t){\Psi}\right],\\
(\Phi(z,t),\Psi(z,t))=(\Phi(z,t+T),\Psi(z,t+T)),\\
\end{cases}
\end{equation}
with the asympototic conditions
\begin{equation}
(\Phi,\Psi)(-\infty,t)=(0,0),\qquad\qquad(\Phi,\Psi)(\infty,t)=(1,1). \label{1.8}
\end{equation}
For convenience, we set $\omega(x,t;\omega_{0})=(\phi(x,t;\omega_{0}),\psi(x,t;\omega_{0}))$ with $\omega_{0}=(\phi_{0},\psi_{0})$ and  $\Gamma(x+ct,t)=(\Phi(x+ct,t),\Psi(x+ct,t)).$ Moveover, let
\begin{equation}
f_{1}(\phi,\psi,t)=\phi\left[\,a_{1}(t)\,p(t)\,(1-\phi)-b_{1}(t)\,q(t)\,(1-\psi)\,\right]
 \end{equation}
and
\begin{equation}
f_{2}(\phi,\psi,t)=(1-\psi)\left[\, a_{2}(t)\,p(t)\,\phi-b_{2}(t)\,q(t)\,\psi\,\right].
\end{equation}
Thus we can rewrite (\ref{coo sys}) into \begin{equation}\label{1.12}
\left\{
\begin{array}{lr}
\phi_{t}=d_{1}(t)\left(\int_\mathbb{R} J_{1}(y)\phi(x-y,t) dy -\phi\right)+f_{1}(\phi,\psi,t),\\[2mm]
\psi_{t}=d_{2}(t)\left(\int_\mathbb{R} J_{2}(y)\psi(x-y,t) dy -\psi\right)+f_{2}(\phi,\psi,t),\\[2mm]
(\phi(x,0),\psi(x,0))=(\phi_{0},\psi_{0})(x).
\end{array}
\quad\text{$ x \in \mathbb{R},\, \,t>0,$}
\right.
\end{equation}

Nonlocal dispersal is totally different from the classical local diffusion and the solution map cannot smooth the initial data. It also yields challengings in the solution compactness as well as in the studying of eigenvalue problems. In this paper, we first establish the existence, uniqueness, monotonicity and stability of the traveling waves. Since the sign of wave speed determines which species will win the competition (or  which species will die out),  more interestingly and importantly,  we will study how to obtain  criteria to determine the speed sign of the wave. Our results provide possible deep understandings on the combined impact of competition, dispersal and seasonality on the invasion direction,  which  sheds new light on  the modelings and analysis of population competition and species invasion in heterogeneous media.

{\begin{remark}
It is easy to check that condition (\ref{A2}) is weaker than (\ref{1.12a}). In what follows, we will prove  the existence of a bistable traveling wave solution under (\ref{1.12a}). However, condition (\ref{A2}) is enough for us to prove  the uniqueness of  the bistable traveling wave solution. We conjecture  that  condition (\ref{A2}) can  guarantee the existence of  a bistable traveling wave solution.
\end{remark}}

 The paper is organized as follows. In section 2, we prove the existence,  monotonicity, uniqueness, and stability of the time-periodic traveling wave. In section 3, we establish  the value range of the bistable wave speed, which indicates  the relationship among the bistable wave speed, the spreading speeds of monostable subsystems, and the speed of upper/lower solutions. In section 4, by constructing upper/lower solutions, we derive  explicit conditions to get the positive and negative wave speeds. Examples and numerical simulations are presented in Section 5. Section 6 is with conclusion and discussion.

\section{The bistable traveling wave}
\subsection{Preliminaries}\label{subsec1}
\textbf{Notation. } We suppose that $\mathbb{\chi}$ is an ordered Banach space with a norm $\|\cdot\|_{\mathbb{\chi}}$ and its  positive cone $\mathbb{\chi}^+$ is well defined. Assume that ${\mathrm{Int}}(\mathbf{\chi}^+)$ is not empty.  For any $\xi, \varsigma \in \mathbb{\chi}$, we say  $\xi\geqslant \varsigma$ if $\xi-\varsigma\in \chi^+$, $\xi>\varsigma$ if $\xi\geqslant\varsigma$ but $\xi\neq \varsigma$, and  $\xi\gg\varsigma$  if $\xi-\varsigma\in {\mathrm{Int}}(\mathbf{\chi}^+)$. A subset of  $\mathbb{\chi}$ is called totally unordered provided that no two elements are ordered.  Let $$
\mathcal{C}=\lbrace u \in C(\mathbb{R},\chi)\vert  u  \text{ is a  nondecreasing function} \rbrace
$$ and equip $\mathcal{C}$ with a compact open topology.
For any  $\rho, \varphi\in\mathcal{C}$, we define
$$
\rho\geq\varphi \quad \text{if} \quad \rho(x)\ge \varphi(x) \quad\text{for all}\quad  x\in \mathbb{R}.
$$
Similarly, we define $\rho>\varphi$ if  $\rho\geq\varphi$ but $\rho\neq\varphi$, and $\rho\gg\varphi$ if $\rho(x)\gg \varphi(x)$
for all  $x\in \mathbb{R}$. Let $\chi_\sigma=\{\gamma\in\chi: \mathbf{o}\leq \gamma \leq \sigma\}$ and $\mathcal{C}_\sigma=\{\varphi\in\mathcal{C}: \mathbf{o}\leq\varphi\leq\sigma\}$ for any $\sigma\in\chi$ with $\sigma>\mathbf{o}$, where $\mathbf{o}$ is the zero element in $\chi$ or $\mathcal{C}$.

Assume that $\beta\in  {\mathrm{Int}}(\mathbf{\chi}^+)$ and $Q$ maps $\mathcal{C}_\beta$ to $\mathcal{C}_\beta$. Let $E$ be the set of all fixed pints of $Q$ restricted on $\chi_\beta$. Suppose that $\mathbf{o}$ and $\beta$ are in $E$.
Define a translation operator $T_{y}$ on $\mathcal{C}$ for any $y \in R$ by $ T_{y}[\phi](x)=\phi(x-y),\forall x \in \mathbb{R},\phi \in \mathcal{C}.$ Based on the idea in  \cite{Bao,Fang,JF},  we give the assumptions on the map $Q$.
\begin{enumerate}
\item[(H1)](Translation invariance) $T_{y}\circ Q[\phi]=Q\circ T_{y}[\phi],~\forall \phi \in \mathcal{C}_\beta,~y \in \mathbb{R}.$
\item[(H2)](Continuity) $Q: \mathcal{C}_\beta\rightarrow \mathcal{C}_\beta $ is continuous in the sense that if $\phi_{n}\rightarrow\phi$ in $\mathcal{C}_\beta$, then $	Q[\phi_{n}](x)\rightarrow Q[\phi](x)$ in $\chi_{\beta}$ for almost all $x \in \mathbb{R}.$
\item[(H3)](Monotonicity) $Q$ is order-preserving in the sense that $Q[\phi]\geqslant Q[\psi]$ whenever $\phi\geqslant\psi$ in $\mathcal{C}_\beta.$
\item[(H4)](Weak-compactness) For any fixed $x \in \mathbb{R},$ the set $Q\,[\,\mathcal{C_{\beta}}]$ is precompact in $\chi_{\beta}$.
\item[(H5)] (Bistability) Two fixed points $\mathbf{o}$ and $\beta$ are strongly stable from above and below, respectively. For the map $Q:\chi_\beta\rightarrow \chi_{\beta},$  the equilibra set $E \backslash \lbrace\mathbf{o},\beta\rbrace$ is totally unordered. The definition of strong stability is seen in \cite{Bao,Fang}.
\item[(H6)] (Counter-propagation) For $\alpha_{i} \in E \backslash \lbrace\mathbf{o},\beta\rbrace, c^*_-(\alpha_i,\beta)+c^*_+(0,\alpha_i)>0,i=1,2,$ where $ c^*_-(\alpha_i,\beta)$ and $c^*_+(0,\alpha_i)$ are recalled in \cite{Fang,Liang06,Liang07,Bao}.
\end{enumerate}
\begin{definition}\label{traveling wave def}
A family  of mappings  $\{ Q_t \}_{t\in \mathbb{R}^+}$ is called a $T$-periodic semiflow on space $\mathcal{C}$ provided that it has the following properties:
 \begin{enumerate}
\item[(i)] $Q_0[\phi]=\phi$, $\forall \phi\in\mathcal{C}$,
\item[(ii)] $Q_{t+T}[\phi]=Q_t\circ Q_T[\phi]$ for all $t\geq 0$, $\phi\in\mathcal{C}$,
\item[(iii)] $Q_{t_{n}}[\phi_{n}](x)\rightarrow Q_{t}\,[\phi](x)$  in $\chi_{\beta}$ for almost all $x \in \mathbb{R}$ whenever $t_{n}\rightarrow t$ and $\phi_{n}\rightarrow \phi$ in $\mathcal{C_{\beta}}.$
\end{enumerate}
\end{definition}
Moreover, the mapping $Q_T$ is called the Poincar\'{e} map associated with this periodic semiflow.
\begin{definition}(see \cite{Fang} Definition 3.3)
$\Gamma(x+ct,t)$ is said to be a traveling wave of the  semiflow $\{ Q_t \}_{t \in \mathbb{R^{+}}}$ with speed $c$, if $ Q_t [\Gamma(x,0)](x)=\Gamma(x + ct,t)$ for all $x \in \mathbb{R}$ and $ t \in \mathbb{R^{+}}.$
\end{definition}

\begin{lemma}\label{existence lemma}
(see \cite{Fang} Theorem 5.4) Let $\beta(t)$ be a strongly positive periodic fixed point of $\left\lbrace Q_{t}\right\rbrace _{t\geqslant0}$ restricted on $\chi_{\beta}$ with $Q_t[\beta(0)]=\beta(t)$ and  assume that $\left\lbrace Q_{t}\right\rbrace _{t\geqslant0}$ is a T-periodic semiflow on $\mathcal{C}_{\beta(0)}.$ Further, assume that the Poincar\'{e} map $Q_{T}$ satisfies  (H1)-(H6) with $\beta=\beta(0).$ Then there exist $c \in \mathbb{R}$ and $\phi(x,t)$ with $\phi(-\infty, t)=0$ and $\phi(+\infty ,t)=\beta(t)$ such that $Q_{t}[\phi](x)=\phi(x+ct, t)$ for all $(x,t) \in \mathbb{R}\times \mathbb{R^+} .$ Furthermore, $\phi(x,t)\in \mathcal{C}_{\beta(t)}$ is nondecreasing in $x$ and is $T$-periodic in $t.$
\end{lemma}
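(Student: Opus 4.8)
The plan is to prove this continuous, $T$-periodic statement by reducing it to a \emph{discrete} traveling-wave problem for the Poincar\'{e} map $Q:=Q_T$ and then lifting the solution back. By the cocycle identity $Q_{t+T}=Q_t\circ Q_T$ of Definition \ref{traveling wave def} together with the translation invariance (H1), it suffices to produce a real number $c$ and a nondecreasing $\phi_0\in\mathcal{C}_{\beta(0)}$ with $\phi_0(-\infty)=\mathbf{o}$, $\phi_0(+\infty)=\beta(0)$ such that $Q_T[\phi_0](x)=\phi_0(x+cT)$ for all $x\in\mathbb{R}$. Granting such a discrete wave, I would set $\phi(x,t):=Q_t[\phi_0](x-ct)$ and verify the required identity $Q_t[\phi_0](x)=\phi(x+ct,t)$ directly from this definition, together with the $T$-periodicity in $t$ via
\[
\phi(x,t+T)=Q_t\!\left[Q_T[\phi_0]\right](x-ct-cT)=Q_t[\phi_0](x-ct)=\phi(x,t),
\]
where the middle step uses $Q_T[\phi_0]=T_{-cT}\phi_0$ and (H1). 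The monotonicity and asymptotics of $\phi(\cdot,t)$ then follow from those of $\phi_0$ and the order preservation of the semiflow, so the whole problem collapses to the discrete one for the monotone map $Q$ obeying (H1)--(H6).

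To solve the discrete problem I would run a recursion-and-compactness scheme. Starting from a front-like datum $\varphi\in\mathcal{C}_{\beta(0)}$ that equals $\mathbf{o}$ to the far left and $\beta(0)$ to the far right, I would study the iterates $Q^n[\varphi]$. Monotonicity (H3) and translation invariance (H1) keep the sequence well ordered and front-like, and the bistable structure lets one track its motion through a normalization: fix an intermediate level $\lambda$ with $\mathbf{o}\ll\lambda\ll\beta(0)$, recenter each iterate so that its $\lambda$-level crossing sits at the origin, and record the shifts $x_n$ and the renormalized profiles $\tilde\varphi_n=T_{x_n}Q^n[\varphi]$; the candidate speed is read off from the asymptotic shift rate $c=\lim_{n\to\infty}x_n/(nT)$. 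Since each $\tilde\varphi_n$ is nondecreasing and $\chi_{\beta(0)}$-valued, the weak compactness (H4) confines the values to a precompact set, and a Helly-type selection argument over a countable dense set of $x$ produces a subsequence $\tilde\varphi_{n_k}$ converging a.e. to a monotone limit $\phi_0$; the continuity (H2) then lets me pass $Q$ through the limit to obtain $Q[\phi_0]=T_{-cT}\phi_0$, which is exactly the discrete wave equation.

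The hard part will be to show that the limit profile $\phi_0$ is a \emph{genuine} bistable front, i.e. that $\phi_0(-\infty)=\mathbf{o}$ and $\phi_0(+\infty)=\beta(0)$ rather than getting pinned at an intermediate equilibrium $\alpha_i\in E\setminus\{\mathbf{o},\beta(0)\}$. This is precisely where bistability (H5) and counter-propagation (H6) must be used. The total unorderedness of $E\setminus\{\mathbf{o},\beta(0)\}$ in (H5) rules out an ordered cascade of intermediate states, so the only failure mode is a persistent plateau at a single $\alpha_i$, splitting the front into a lower transition on $[\mathbf{o},\alpha_i]$ and an upper transition on $[\alpha_i,\beta(0)]$. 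I would exclude this by comparison with the monostable subsystems: restricting $Q$ to the order intervals $[\mathbf{o},\alpha_i]$ and $[\alpha_i,\beta(0)]$ gives monostable monotone maps whose spreading speeds $c^*_+(0,\alpha_i)$ and $c^*_-(\alpha_i,\beta)$ are furnished, together with monostable fronts usable as sub/super solutions, by the recalled theory of \cite{Liang06,Liang07,Fang}. The inequality $c^*_-(\alpha_i,\beta)+c^*_+(0,\alpha_i)>0$ of (H6) forces the upper front invading $\alpha_i$ from above and the lower front invading $\mathbf{o}$ from below to move toward each other, so the two pieces must collide and no plateau at $\alpha_i$ can survive; the strong stability of $\mathbf{o}$ and $\beta(0)$ in (H5) then pins the two tails exactly at these equilibria.

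Finally, with the discrete wave $(c,\phi_0)$ in hand, I would invoke the lifting of the first paragraph to obtain $\phi(x,t)=Q_t[\phi_0](x-ct)$, which by construction satisfies $Q_t[\phi](x)=\phi(x+ct,t)$ on $\mathbb{R}\times\mathbb{R}^+$, is $T$-periodic in $t$, nondecreasing in $x$, and has the correct limits $\phi(-\infty,t)=\mathbf{o}$ and $\phi(+\infty,t)=Q_t[\beta(0)]=\beta(t)$; the inclusion $\phi(\cdot,t)\in\mathcal{C}_{\beta(t)}$ follows since $Q_t$ maps $\mathcal{C}_{\beta(0)}$ into $\mathcal{C}_{\beta(t)}$. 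I expect the plateau-exclusion step above to be the only genuinely delicate point: the recursion, the Helly selection, and the lifting are routine once (H1)--(H4) are available, whereas (H5)--(H6) are precisely tailored to force the bistable connection through.
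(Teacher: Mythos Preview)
The paper does not prove this lemma at all: it is quoted verbatim from \cite{Fang} (Theorem~5.4 there) and simply invoked as a black box, so there is no ``paper's own proof'' to compare against. Your sketch is therefore not redundant with anything in the present paper; it is an attempt to reconstruct the argument of the cited reference.

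As a reconstruction, your outline is broadly sound and captures the two genuine ingredients: the reduction from the $T$-periodic semiflow to the Poincar\'e map $Q_T$ via the cocycle identity and (H1), and the use of (H5)--(H6) to force the limiting profile to connect $\mathbf{o}$ to $\beta(0)$ rather than stall at some $\alpha_i$. The lifting computation in your first and last paragraphs is correct as written. One caution: the actual construction in \cite{Fang} does not proceed by iterating a single front-like datum, recentering, and taking a Helly limit in the way you describe; their argument builds the bistable wave by squeezing between monostable fronts on the sub-intervals $[\mathbf{o},\alpha_i]$ and $[\alpha_i,\beta]$ and exploiting the counter-propagation inequality directly to produce the speed, rather than reading $c$ off as $\lim x_n/(nT)$. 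Your recursion scheme is closer in spirit to the monostable constructions of Weinberger and Liang--Zhao, and in the bistable setting the step ``pass $Q$ through the Helly limit to get $Q[\phi_0]=T_{-cT}\phi_0$'' hides real work, since (H2) only gives convergence of $Q[\phi_n](x)$ for a.e.\ $x$ and you must also show that the recentering shifts $x_{n+1}-x_n$ actually converge. None of this is fatal, but if you want a self-contained proof you should either follow \cite{Fang} more closely or be prepared to supply those missing estimates.
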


To discuss the dynamical behaviors of the semiflow generated by  system (\ref{coo sys}),  we first introduce the definition of upper and lower solutions to the wave profile system in (\ref{wave sys}).  An upper solution/lower solution of (\ref{coo sys}) can be defined similarly.
\begin{definition}\label{upper_solution_def1}
A pair of bounded function $(\Phi(z,t),\Psi(z,t))$ on $\mathbb{R}\times[0,T)$ is called an upper solution ( a lower solution) of (\ref{wave sys}) if $(\Phi(z,t),$ $\Psi(z,t))$ is continuous in $(z,t )\in \mathbb{R} \times [0,T)$, and satisfy
\[
\left\{
\begin{array}{lr}
\Phi_{t}\geq (\le)~ d_{1}(t)~[ J_{1}\ast\Phi(z,t)-\Phi(z,t)] -{c}{\Phi}_{z}+{\Phi}[a_{1}(t)\,p(t)(1-{\Phi})-b_{1}(t)\,q(t)(1-{\Psi})],\\
\Psi_{t}\geq (\le)~d_{2}(t)~[ J_{1}\ast\Psi(z,t)-\Psi(z,t)]-{c}{\Psi}_z+(1-{\Psi})(a_{2}(t)\,p(t){\Phi}-b_{2}(t)\,q(t){\bar\Psi})
\end{array}
\right.
\]
for all $(z,t)\in\mathbb{R}\times(0,T)$.
\end{definition}
\subsection{Existence and  monotonicity}
Let $\chi=\mathbb{R}^2$.  Let $P_{i}(t)$ be the solution semigroup of the linear nonlocal dispersal equation $u_{t}=d_{i}(t)[(J\ast u)-u]$ as below:
$$
P_{i}(t)[\phi](x)=e^{-\int_{0} ^{t}d_{i}(\tau)\,d\tau}\left[a_{0}+a_{1}\int_{0} ^{t}d_{i}(\tau)\,d\tau +a_{2}\int_{0} ^{t}\int_{0} ^{\tau}d_{i}(\tau)d_{i}(s)\,d\tau \,ds+\cdot\cdot\cdot \right](x),\,\,i=1,2,
$$
where $a_{0}(\phi)=\phi$ and $a_{m}(\phi)=J_{i}\ast a_{m-1}(\phi)$\,, $\forall \,m\geqslant1.$ At this point, define
\[
P(t)=\left( \begin{matrix}
                 P_{1}(t) & 0 \\
                 0 & P_{2}(t)
            \end{matrix}\right),   \qquad
f(\omega,t)= \left( \begin{matrix}
                 f_{1}(\phi,\psi,t)\\
                 f_{2}(\phi,\psi,t)
            \end{matrix}\right).
\]
Then the solution of system (\ref{coo sys})(or \ref{1.12})  can be represented in the integral form
\begin{equation} \label{integral solution}
\omega(x,t;\omega_{0})=P(t)[\omega_{0}](x)+\int_0^t P(t-s)[f(\omega(\cdot,s),s)](x)ds,\,x \in \mathbb{R}, \,t\geqslant0.
\end{equation}
By this, we define a family of operators $Q_{t}$ associated with  system (\ref{coo sys}) by
\begin{equation}\label{flow}
Q_{t}(\omega_{0})=\omega(x,t;\omega_{0}),\quad \forall\, x \in \mathbb{R},\,t\geqslant0.
\end{equation}
{It is easy to show that $ Q_{t}(\omega_{0})$ is a $T$-periodic semiflow.} However,
the existence of a bistable traveling wave is usually difficult to prove. Here we use the theory of  monotone dynamical systems developed in \cite{Fang} to deal with it. Hence  a further condition on  the symmetry of the kernel functions  is   required so that the counter-propagation (H6) is satisfied for  the Poincar\'{e} map  $Q_{T}$ associated with (\ref{flow}), i.e.,
$$
Q_{T}(\omega_{0})=\omega(x,T;\omega_{0})=P(T)[\omega_{0}](x)+\int_0^T P(T-s)[f(\omega(\cdot,s),s)](x)ds,\,x\in \mathbb{R},\, \omega_{0}\in \mathcal{C}_\beta.
$$

\begin{theorem}\label{existence}
Assume that $J_i(x)=J_i(-x), i=1,2$ and (\ref{1.12a}) holds. Then there exist a  constant $c \in \mathbb{R}$ and a  T-periodic nondecreasing (in $z$) traveling wave profile $\Gamma(z,t)=(\Phi(z,t),\Psi(z,t))$  to (\ref{wave sys})-(\ref{1.8}), where $ z=x+ct, \ \Gamma(z,t+T)=\Gamma(z,t)$.  Moreover,  $\dfrac{\partial}{\partial z}\Phi_{\pm}(z,t)>0$ and $\dfrac{\partial}{\partial z}\Psi_{\pm}(z,t)>0$ for $z \in \mathbb{R}$ and $t \in \mathbb{R_{+}}.$ Here, for $\pm$, we mean the left and right derivatives at $z$.
\end{theorem}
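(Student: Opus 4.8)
The plan is to verify that the Poincaré map $Q_T$ associated with the cooperative system (\ref{coo sys}) satisfies all the hypotheses (H1)--(H6) of Lemma \ref{existence lemma}, and then conclude existence and nondecreasing monotonicity directly from that lemma; the strict (left/right) monotonicity of the profiles will then be upgraded by a separate sliding/strong-maximum-principle argument. First I would check (H1): translation invariance is immediate from the convolution structure of $P(t)$ and the autonomy in $x$ of the nonlinearities $f_1,f_2$, since a shift $T_y$ commutes with both $J_i\ast\,\cdot\,$ and pointwise evaluation. For (H2), continuity of $Q_T$ in the compact-open topology follows from the integral representation (\ref{integral solution}) together with continuous dependence on initial data, which in turn follows from a Gronwall estimate on $\|\omega_1-\omega_2\|$ using the Lipschitz bound on $f$ on the invariant box $\mathcal{C}_\beta$. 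For (H3), monotonicity (order-preservation), I would invoke the cooperative structure: on $[0,1]\times[0,1]$ the off-diagonal derivatives $\partial_\psi f_1 = b_1(t)q(t)\phi \ge 0$ and $\partial_\phi f_2 = a_2(t)p(t)(1-\psi)\ge 0$ are nonnegative, so (\ref{coo sys}) generates a monotone semiflow by the standard comparison principle for nonlocal cooperative systems; one must also check that $\mathcal{C}_\beta$ with $\beta=(1,1)$ is positively invariant, i.e. that $(0,0)$ and $(1,1)$ are respectively a sub- and super-solution, which is a direct sign check of $f$ on the boundary of the box. Hypothesis (H4), weak compactness of $Q_T[\mathcal{C}_\beta]$ in $\chi_\beta=\mathbb{R}^2$ at each fixed $x$, is essentially trivial because $\mathcal{C}_\beta$ is uniformly bounded and $\chi_\beta$ is finite-dimensional, so boundedness gives precompactness.

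The substantive hypotheses are (H5) and (H6). For (H5) I would use the analysis of the kinetic system (\ref{kinetic_model}) recalled in the introduction: under (\ref{1.12a}) the semitrivial periodic orbits $(p(t),0)$ and $(0,q(t))$ of (\ref{original_model}) are stable while $(0,0)$ is unstable, which after the change of variables $\phi=u/p$, $\psi=(q-v)/q$ translates into $\mathbf{o}=(0,0)$ and $\beta=(1,1)$ being strongly stable (from above and below respectively) for the reaction part, and hence for $Q_T$ by comparison. The requirement that $E\setminus\{\mathbf{o},\beta\}$ be totally unordered is where condition (\ref{1.12a}) does its real work: it forces uniqueness (and linear instability) of the coexistence periodic solution $(u^*(t),v^*(t))$, so $E\setminus\{\mathbf{o},\beta\}$ is the single point $(\hat\phi(t),\hat\psi(t))$ (equivalently $\alpha_1=(0,1)$ appears as the other possible boundary equilibrium), and a one-point set is vacuously totally unordered. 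I would cite \cite{Bao} for the uniqueness statement rather than reprove it. For (H6), the counter-propagation condition $c^*_-(\alpha_i,\beta)+c^*_+(0,\alpha_i)>0$, I would note that on the two monostable subsystems (obtained by freezing one component at a semitrivial value) the spreading speeds $c^*_\pm$ exist by the Liang--Zhao / Fang--Zhao theory \cite{Liang06,Liang07,Fang}, that the symmetry assumption $J_i(x)=J_i(-x)$ forces $c^*_-(\alpha_i,\beta)=c^*_+(\alpha_i,\beta)$ and similarly for the other pair (the leftward and rightward speeds coincide), and then that strict positivity of the sum reduces to showing each monostable spreading speed is itself positive, which follows from the linear instability of the intermediate equilibrium $\alpha_i$ guaranteed again by (\ref{1.12a}) via a principal-eigenvalue computation of the linearized time-periodic nonlocal operator.

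Having established (H1)--(H6), Lemma \ref{existence lemma} yields a constant $c\in\mathbb{R}$ and a profile $\Gamma(z,t)=(\Phi,\Psi)(z,t)$, nondecreasing in $z$, $T$-periodic in $t$, with $\Gamma(-\infty,t)=(0,0)$ and $\Gamma(+\infty,t)=\beta(t)=(1,1)$, satisfying $Q_t[\Gamma(\cdot,0)](x)=\Gamma(x+ct,t)$; writing this out via (\ref{integral solution}) and differentiating gives exactly the wave profile system (\ref{wave sys}) with the boundary conditions (\ref{1.8}). It remains to promote ``nondecreasing'' to strict positivity of the one-sided $z$-derivatives. For this I would differentiate (or difference) the profile equation in $z$: set $\Phi_h(z,t)=\Phi(z+h,t)-\Phi(z,t)\ge 0$ and $\Psi_h$ similarly for $h>0$; these satisfy a linear cooperative nonlocal system with bounded coefficients, and since $\Phi_h\not\equiv 0$ (otherwise $\Phi$ would be constant in $z$, contradicting the distinct limits $0$ and $1$), a strong maximum principle for nonlocal cooperative systems forces $\Phi_h>0$ and $\Psi_h>0$ everywhere; letting $h\downarrow 0$ gives strict positivity of the right derivative, and the same with $h<0$ gives the left derivative. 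I expect the main obstacle to be the careful verification of (H6), specifically pinning down that the relevant monostable spreading speeds are strictly positive under (\ref{1.12a}) and that the kernel symmetry genuinely delivers the left/right speed equality needed for the sum to be controlled; the strong maximum principle step for the strict monotonicity is also delicate in the nonlocal setting because one cannot simply invoke the classical parabolic version and must instead argue directly from the sign structure of the nonlocal term, as in \cite{GB}.
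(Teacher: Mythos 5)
Your overall strategy coincides with the paper's: verify (H1)--(H6) for the Poincar\'e map $Q_T$ and invoke Lemma \ref{existence lemma}, then upgrade monotonicity. Your treatment of (H1)--(H4) is fine (the paper states these without detail), and your (H6) argument is the paper's: the two monostable subsystems are scalar KPP-type equations whose spreading speeds are $\inf_{\mu>0}\gamma_i(\mu)/\mu$, and the kernel symmetry $J_i(x)=J_i(-x)$ makes $\int_{\mathbb R}J_i(y)e^{\mp\mu y}dy\ge 1$, hence both speeds positive. One small correction on (H5): $E\setminus\{\mathbf{o},\beta\}$ is not a single point; it contains both $\alpha_1=(0,1)$ and the coexistence state $(\hat\phi(0),\hat\psi(0))$, which are however mutually unordered (first components ordered one way, second components the other), so the conclusion you need still holds.

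The genuine gap is in your final step. From $\Phi_h(z,t):=\Phi(z+h,t)-\Phi(z,t)>0$ for every \emph{fixed} $h>0$ you cannot conclude that the one-sided derivative $\lim_{h\downarrow 0}\Phi_h(z,t)/h$ is positive: the strong-maximum-principle lower bound you obtain is not uniform in $h$, and a limit of strictly positive quantities can perfectly well be zero. Thus your sliding argument only re-proves that $\Phi(\cdot,t)$ is (strictly) increasing as a function, not the asserted strict positivity of $\partial_z^\pm\Phi$ and $\partial_z^\pm\Psi$. To close this you must argue on the derivative (or the difference quotient with a bound proportional to $h$) directly, which is what the paper does: suppose $\partial_z\Phi(z_0,t_0)=0$, use $T$-periodicity of the profile to get vanishing at all times $t_0+nT$, view $\Gamma$ as the solution of the Cauchy problem for (\ref{wave sys}) with initial datum $\Gamma(\cdot,0)$, add terms $-\beta_i u$ so that the modified reaction is monotone, differentiate in $x$, and use the mild formulation $\phi_x(\cdot,t)\ge \bar P_1(t)[\Phi_x(\cdot,0)]$, where $\bar P_1(t)$ is the linear nonlocal semigroup. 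Since $\Phi_x(\cdot,0)\ge 0$ is not identically zero (the limits $0$ and $1$ at $\mp\infty$ differ) and the support of $J_1$ contains an interval, the iterated-convolution series defining $\bar P_1(t)$ is strictly positive at the relevant point for $t=nT$ large, contradicting the assumed vanishing. (Note also that $\Phi$ need not be differentiable in $z$ when $c=0$, which is why the statement and the paper's argument are phrased in terms of one-sided derivatives; any fix of your last step has to respect this.)
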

\begin{proof}
We can easily verify that $Q_{T}$ satisfies assumptions (H1)-(H5). If (H6) is true for  $Q_{T}$, then Lemma \ref{existence lemma} guarantees the first statement in the theorem.  In the following, we prove that $Q_{T}$ satisfies (H6), that is,
\begin{equation}\label{h6}
c^*_-(\alpha_i,\beta)+c^*_+(0,\alpha_i)>0, i=1, 2,
\end{equation}
where  $c^{*}_{-}(\alpha_{i},\beta)$ is called the leftward spreading speed of $Q_{T}$ in the phase space  $\mathcal{C}_{[\alpha_{i}, \beta]}$, and   $c^{*}_{+}(0,\alpha_{i})$ is called the rightward spreading speed of $Q_{T}$ in the phase space  $\mathcal{C}_{[0,\alpha_{i}]}$ (see (2.3) in \cite{Bao} or (2.8) in \cite{Fang}).

Here we only  prove
inequality (\ref{h6}) for the case of $i=1$, i.e., $\alpha_{1}=(0,1)$, since the other case ($i=2$) can be similarly handled as in \cite{Bao} with the assumption of (\ref{1.12a}). Suppose that $(\Phi(x+ct,t), \Psi(x+ct,t))$ is a traveling wave solution of (\ref{coo sys}) connecting $\alpha_1$ to $\beta$. Then $(\phi(x,t), \psi(x,t))$ solves
\begin{equation}\label{c01}
\left\{
\begin{array}{lr}
\phi_{t}=d_{1}(t)\left(\int_\mathbb{R} J_{1}(y)\phi(x-y,t) dy -\phi(x,t)\right)+\,a_{1}(t)\,p(t)\phi\,(1-\phi),\\
\psi_{t}=0.
\end{array}
\right.
\end{equation}
Linearizing the first equation at $\phi=0$, we have
\begin{equation}\label{fir-linear}
\phi_{t}=d_{1}(t)\left(\int_\mathbb{R} J_{1}(y)\phi(x-y,t) dy -\phi(x,t)\right)+\,a_{1}(t)\,p(t)\phi.
\end{equation}
Let the solution of (\ref{fir-linear}) be of the form $\eta_{1}(t)e^{\mu x}$. Then $\eta_{1}(t)$ satisfies the $\mu$-parameterized linear equation
\begin{equation}\label{c01a}
\left\{
\begin{array}{lr}
\eta_{1}'(t)= \left( d_{1}(t)\int_\mathbb{R} J_{1}(y)e^{-\mu y} dy - d_{1}(t)+a_{1}(t)p(t)\right) \eta_{1}(t),\\
\eta_{1}(0)=\eta_{1}(T).
\end{array}
\right.
\end{equation}
It is well known that the principal eigenvalue of (\ref{c01}) is
$$
\gamma_{1}(\mu)=\dfrac{1}{T}\int_0 ^T d_{1}(t)\left(\int_\mathbb{R} J_{1}(y)e^{-\mu y} dy-1\right)+a_{1}(t)p(t)dt.
$$
Furthermore, by the reference \cite{Liang07}, we have
\begin{equation}\label{C-}
c^{*}_{-}(\alpha_{1},\beta)=\inf_{0<\mu<\infty}\frac{\gamma_{1}(\mu)}{\mu}.
\end{equation}
The condition $J_1(x)=J_1(-x)$ implies that it is positive. On the other hand, assume that  $(\Phi(x+ct,t), \Psi(x+ct,t))$ is a traveling wave solution of (\ref{coo sys}) connecting $0$ to $\alpha_1$. Then, by  (\ref{coo sys}), it is obvious that $(\phi(x,t), \psi(x,t))$ satisfies
\begin{equation}\label{coo sys1}
\left\{
\begin{array}{lr}
\phi_{t}=0,\\
\psi_{t}=d_{2}(t)\left(\int_\mathbb{R }J_{2}(y)\psi(x-y,t) dy -\psi(x,t)\right)-b_{2}(t)\,q(t)\,\psi(1-\psi).
\end{array}
\right.
\end{equation}
Repeating the above process, we obtain
\begin{equation}\label{C+}
c^{*}_{+}(0,\alpha_{1})=\inf_{0<\mu<\infty}\frac{\gamma_{2}(\mu)}{\mu}>0,
\end{equation}
where
$$
\gamma_{2}(\mu)=\frac{1}{T}\int_0 ^T d_{2}(t)\left(\int_\mathbb{R} J_{2}(y)e^{\mu y} dy-1\right)+b_{2}(t)q(t) dt.
$$
By (\ref{C-}) and (\ref{C+}), we get that (H6) is true.

Next, we verify the second statement in the theorem. By Lemma \ref{existence lemma}, it follows that
$$
\dfrac{\partial}{\partial z}\Phi(z,t)\geq0 \  \text{ and }  \ \  \dfrac{\partial}{\partial z}\Psi(z,t)\geq0
$$
for $z \in \mathbb{R}$ and $t \in \mathbb{R^{+}} $ if the derivatives  exist. If they do not exist, here we mean the left and right derivatives.  We need only to prove that the equal sign does not appear.
Suppose that there exists $z_{0}, t_0 \in \mathbb{R}$ such that $\dfrac{\partial}{\partial z}\Phi(z_0,t_0)=0$. By the periodic property of the wave functions, it would give   $\dfrac{\partial}{\partial z}\Phi(z_0,t_0+nT)=0$ for any positive integer $n$. We can assume that $t_0=0$ and $z_0=x_0+ct_0=x_0$ for some $x_0$. Therefore, we can re-write (\ref{1.12}), with initial wavefront profile,  as
 \begin{equation}\label{1.12abc}
\left\{
\begin{array}{lr}
\phi_{t}=d_{1}(t)\left(\int_\mathbb{R} J_{1}(y)\phi(x-y,t) dy -\phi\right)-\beta_1 \phi+\bar f_{1}(\phi,\psi,t),\\[2mm]
\psi_{t}=d_{2}(t)\left(\int_\mathbb{R} J_{2}(y)\psi(x-y,t) dy -\psi\right)-\beta_2 \psi+\bar f_{2}(\phi,\psi,t),\\[2mm]
(\phi(x,0),\psi(x,0))=(\Phi(x,0),\Psi(x,0))
\end{array}
\quad\text{$ x \in \mathbb{R},\, \,t>0,$}
\right.
\end{equation}
Here $\bar f_1=f_1+\beta_1 \phi, \bar f_2=f_2+\beta_2 \psi$ with a proper choice of $\beta_1$ and $\beta_2$ so that both $\bar f_1$ and $\bar f_2$ are monotone in $\phi$ and $\psi$. Taking derivative with respect to $x$ at both sides of each equation in (\ref{1.12abc}) gives

\begin{equation}\label{1.12aa}
\left\{
\begin{array}{lr}
(\phi_x)_{t}=d_{1}(t)\left(\int_\mathbb{R} J_{1}(y)\phi_x(x-y,t) dy -\phi_x\right)-\beta_1 \phi_x+\bar f_{1\phi}\phi_x+\bar f_{1\psi}\psi_x ,\\[2mm]
(\psi_x)_{t}=d_{2}(t)\left(\int_\mathbb{R} J_{2}(y)\psi_x(x-y,t) dy -\psi_x\right)-\beta_2 \psi_x+\bar f_{2\phi}\phi_x+\bar f_{2\psi}\psi_x,\\[2mm]
(\phi_x(x,0),\psi_x(x,0))=(\Phi_{x}(x,0),\Psi_{x}(x,0)),
\end{array}
\quad\text{$ x \in \mathbb{R},\, \,t>0,$}
\right.
\end{equation}
where $g_{iy}$ represents the partial derivative of  $g_{i}$ with respective to $y$. Let $\bar P_{i}(t)$ be the solution semigroup of the linear nonlocal dispersal equation $u_{t}=d_{i}(t)[(J\ast u)-u]-\beta_i u$. As in (\ref{integral solution}), we get from (\ref{1.12aa})
 \begin{equation} \label{integral solutiona}
\phi_x(x,t)=\bar P_1(t)[\Phi_x(x,0)](x)+\int_0^t \bar P_1(t-s)[\bar f_{1\phi}\phi_x+\bar f_{1\psi}\psi_x]ds\ge \bar P_1(t)[\Phi_x(x,0)](x).
\end{equation}

  Recall that the support of $J_1(x)$ contains at least an interval with the length large than  zero,  and then this makes $\bar P_1(t)[\omega_x(x,0)](x)$ positive for  $x=x_0$ when time $t$ is large, say $nT$ for large $n$. This is a contradiction.
   Thus the supposition is false, and the proof is complete.
\end{proof}

\begin{remark}
When the wave speed $c$ is not zero, it can be proved that $\dfrac{\partial}{\partial z}\Phi(z,t)$ and $\dfrac{\partial}{\partial z}\Psi(z,t)$ are continuous functions in $(z,t)$. However, the smooth property is not clear to us when $c=0$.
\end{remark}

\subsection{Uniqueness}
The following comparison principle can be proved by properly modifying the argument of Lemma 3.2 in \cite{HR}. Hence the proof is omitted here.
\begin{lemma}\label{comparison}
Suppose that $(\phi^{-},\psi^{-})(x,t)$ and $(\phi^{+},\psi^{+})(x,t)$ in $\mathcal{C_{\beta}}$ are a bounded lower solution and a bounded upper solution of (\ref{coo sys}) on $\mathbb{R}\times[0,T)$. Then we have
\item[\rm(i):] if $\phi^{-}(x,0)\leqslant \phi^{+}(x,0)$ and $\psi^{-}(x,0)\leqslant \psi^{+}(x,0)$ for $x \in \mathbb{R},$ then
\begin{equation}
\phi^{-}(x,t)\leqslant \phi^{+}(x,t),~~~\psi^{-}(x,t)\leqslant \psi^{+}(x,t), ~~~~~(x,t) \in \mathbb{R}\times[0,\infty).
\end{equation}
 \item[\rm(ii):]  if $\phi^{-}(x,0)\leqslant \phi_{0}\leqslant \phi^{+}(x,0)$ and $\psi^{-}(x,0)\leqslant \psi_{0}\leqslant \psi^{+}(x,0)$ for $x \in \mathbb{R}$, where $(\phi_{0},\psi_{0})=w_{0}$ is the initial data of (\ref{coo sys}) then
\begin{equation}
\begin{cases}
\phi^{-}(x,t)\leqslant \phi(x,t;w_{0})\leqslant \phi^{+}(x,t),\\
\psi^{-}(x,t)\leqslant \psi(x,t;w_{0})\leqslant \psi^{+}(x,t),\\
\end{cases}
~~~~(x,t) \in \mathbb{R}\times[0,\infty).
\end{equation}
\end{lemma}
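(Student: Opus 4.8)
The plan is to recast the cooperative system (\ref{coo sys}) in the monotone form already introduced in the proof of Theorem \ref{existence}, run a monotone iteration against the positive linear semigroups $\bar P_i$, and then obtain (i)--(ii) by a squeezing argument. \emph{Step 1 (reduction to a monotone problem).} On $\chi_\beta=[0,1]^2$ the reaction term is quasimonotone, since $\partial_\psi f_1=b_1(t)q(t)\,\phi\ge 0$ and $\partial_\phi f_2=a_2(t)p(t)(1-\psi)\ge 0$ there. As $f_1,f_2$ are $C^1$ on the compact set $\chi_\beta\times[0,T]$, I would fix constants $\beta_1,\beta_2>0$ so large that $\bar f_1:=f_1+\beta_1\phi$ and $\bar f_2:=f_2+\beta_2\psi$ are nondecreasing in \emph{both} $\phi$ and $\psi$ on $\chi_\beta\times[0,T]$; this is exactly the rewriting (\ref{1.12abc}). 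One then works with the linear semigroups $\bar P_i(t)$ of $u_t=d_i(t)[J_i\ast u-u]-\beta_i u$, which, by their series representation (as in the display defining $P_i(t)$, with an extra factor $e^{-\beta_i t}$), are positive operators, hence order-preserving on bounded functions.

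\emph{Step 2 (Duhamel (in)equalities).} Writing (\ref{coo sys}) in the form (\ref{1.12abc}) and applying variation of constants, a bounded solution with datum $w_0=(\phi_0,\psi_0)$ satisfies an integral identity of the shape (\ref{integral solution}) with $\bar P_i,\bar f_i$ in place of $P_i,f_i$. The same computation applied to the differential inequalities of Definition \ref{upper_solution_def1} (after the rewriting) shows that a bounded upper solution $(\phi^+,\psi^+)$ satisfies the corresponding integral inequality with ``$\ge$'' in each component, and a bounded lower solution $(\phi^-,\psi^-)$ with ``$\le$'', on $\mathbb{R}\times[0,T]$ --- the same mechanism as the estimate (\ref{integral solutiona}). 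If the upper/lower solutions are only continuous in $t$, I would first perturb them to $(\phi^\pm,\psi^\pm)\pm\varepsilon\,e^{\lambda t}$ for suitable $\lambda$ and let $\varepsilon\downarrow 0$ at the end.

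\emph{Step 3 (monotone iteration and squeezing).} For (ii), given $w_0$ with $w^-(\cdot,0)\le w_0\le w^+(\cdot,0)$, define $\mathcal{T}=(\mathcal{T}_1,\mathcal{T}_2)$ on $\mathbb{R}\times[0,T]$ by $\mathcal{T}_i[\phi,\psi](x,t)=\bar P_i(t)[w_{0,i}](x)+\int_0^t\bar P_i(t-s)\big[\bar f_i(\phi(\cdot,s),\psi(\cdot,s),s)\big](x)\,ds$, with $w_{0,1}=\phi_0$ and $w_{0,2}=\psi_0$. By Steps 1--2, $\mathcal{T}$ is order-preserving on $\{(\phi,\psi):\mathbf{o}\le(\phi,\psi)\le\beta\}$, its unique fixed point is the solution $w(\cdot,\cdot;w_0)$, and $\mathcal{T}[w^+]\le w^+$, $\mathcal{T}[w^-]\ge w^-$. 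Hence $\{\mathcal{T}^n[w^+]\}$ is nonincreasing and $\{\mathcal{T}^n[w^-]\}$ nondecreasing, both trapped between $w^-$ and $w^+$; since $\mathbf{o}$ and $\beta$ are a lower and an upper solution of (\ref{coo sys}) respectively, all iterates stay in $\chi_\beta$, so the monotonicity of $\bar f_i$ from Step 1 is available at every stage. The two sequences converge pointwise, and passing to the limit in the integral equation (monotone/dominated convergence, using the boundedness in (H4)) shows both limits solve (\ref{integral solution}); by uniqueness of that solution both equal $w(\cdot,\cdot;w_0)$, so $w^-\le w(\cdot,\cdot;w_0)\le w^+$ on $\mathbb{R}\times[0,T]$. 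Iterating across successive periods $[nT,(n+1)T]$ via $Q_{t+T}=Q_t\circ Q_T$ (the coefficients, and the upper/lower solutions in question, being $T$-periodic in $t$) extends the bound to $\mathbb{R}\times[0,\infty)$, which is (ii). Part (i) is then immediate: a genuine solution is at once an upper and a lower solution, so taking $w_0=w^-(\cdot,0)$ in (ii) yields $\phi^-\le\phi\le\phi^+$ and $\psi^-\le\psi\le\psi^+$, whence $\phi^-\le\phi^+$ and $\psi^-\le\psi^+$.

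I expect the main obstacle to lie in Steps 2--3: making the Duhamel comparison rigorous for only bounded, possibly non-smooth upper/lower solutions --- which I would handle by the $\varepsilon$-regularization together with positivity of $\bar P_i$ --- and, more fundamentally, obtaining convergence of the monotone iteration without any smoothing in the space variable, the hallmark difficulty of nonlocal dispersal, which forces a pointwise (monotone/dominated convergence) argument rather than one based on compactness. A secondary but necessary point is verifying that $\chi_\beta=[0,1]^2$ is invariant along the iteration, since it is this that keeps the region-restricted monotonicity of $\bar f_i$ usable throughout.
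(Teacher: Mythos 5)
The paper does not actually write out a proof of this lemma: it simply states that the result follows by modifying the argument of Lemma 3.2 in \cite{HR} (Thieme's comparison argument for nonlinear integral equations), so there is no step-by-step paper proof to match against. Your monotone-iteration argument on the Duhamel formulation with the shifted positive semigroups $\bar P_i$ is in the same general spirit (comparison via integral inequalities with positive kernels) and is essentially correct: the quasimonotonicity check in Step 1 is right (on $\phi\ge 0$, $\psi\le 1$ one has $\partial_\psi f_1=b_1q\,\phi\ge0$, $\partial_\phi f_2=a_2p\,(1-\psi)\ge0$), the inequalities $\mathcal{T}[w^+]\le w^+$, $\mathcal{T}[w^-]\ge w^-$ follow from positivity of $\bar P_i$ and the ordering of the initial data, invariance of $[0,1]^2$ follows since $\bar f_i(0,0,t)=0$ and $\bar f_i(1,1,t)=\beta_i$ with $\bar P_i(t)[1]=e^{-\beta_i t}$, and uniqueness of the bounded fixed point is a routine Gronwall estimate, so the limits of both monotone iterate sequences coincide with $w(\cdot,\cdot;w_0)$ and (ii), hence (i), follow.

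Two points need repair, though neither is fatal. First, in extending the conclusion past $[0,T)$ you appeal to $T$-periodicity of the upper/lower solutions; this is not a hypothesis of the lemma and is false in the paper's own application (the upper/lower solutions of Lemma \ref{upper and lower lemma} carry factors $e^{-\rho t}$ and are not periodic). The correct fix is simply to observe that your argument works on any finite time interval on which the differential inequalities hold, and to restart the iteration at $t=nT$ using the ordering already established at that time as the new initial comparison; periodicity of the coefficients alone is what matters. Second, the claim that both iterate sequences are ``trapped between $w^-$ and $w^+$'' tacitly uses $w^-\le w^+$ for all $t$, which is not assumed in (ii); it is also unnecessary, since confinement of the iterates to $\chi_\beta=[0,1]^2$ already gives the boundedness needed for pointwise monotone convergence, after which $w^-\le\lim\mathcal{T}^n[w^-]=w=\lim\mathcal{T}^n[w^+]\le w^+$ delivers the stated squeeze. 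With these adjustments your proof stands as a self-contained alternative to the citation-based treatment in the paper.
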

 To proceed, we need the following lemma.
 \begin{lemma}\label{lecc}
 Assume that (\ref{A2}) holds. There exist two  positive pairs  $(\lambda_{0}, (p_{1}^{-}(t),p_{2}^{-}(t)))$ and $(\lambda_{1}, (p_{1}^{+}(t),p_{2}^{+}(t)))$ solving the following eigenvalue inequality problems
\begin{equation}\label{inequ1}
\left\{
\begin{array}{lr}
\dfrac{d p_{1}^{-}(t)}{d t}\ge \left[\,a_{1}(t)\,p(t)\,-b_{1}(t)\,q(t)\,+\lambda_{0}\right]p_{1}^{-}(t), \\[2mm]
\dfrac{d p_{2}^{-}(t)}{d t}\ge a_{2}(t)\,p(t)\,p_{1}^{-}(t)\,+\left[\lambda_{0}-b_{2}(t)\,q(t)\right]\,p_{2}^{-}(t),\\[2mm]
 p_{1}^{-}(t+T)= p_{1}^{-}(t),~ p_{2}^{-}(t+T)= p_{2}^{-}(t)
\end{array}
\right.
\end{equation}
and
\begin{equation}\label{inequ2}
\left\{
\begin{array}{lr}
\dfrac{d p_{1}^{+}(t)}{d t}\ge \left[\lambda_{1}-a_{1}(t)\,p(t)\right]\,p_{1}^{+}(t)+b_{1}(t)\,q(t)\,p_{2}^{+}(t), \\[2mm]
\dfrac{d p_{2}^{+}(t)}{d t}\ge \left[\lambda_{1}+b_{2}(t)\,q(t)\,-a_{2}(t)\,p(t)\,\right]p_{2}^{+}(t), \\[2mm]
 p_{1}^{+}(t+T)= p_{1}^{+}(t),~ p_{2}^{+}(t+T)= p_{2}^{+}(t),
\end{array}
\right.
\end{equation}
respectively.
\end{lemma}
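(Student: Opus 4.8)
The plan is to reduce the whole statement to one elementary fact about scalar linear periodic ODEs: if $g,h$ are continuous and $T$-periodic with $\int_{0}^{T}g(t)\,dt<0$, then $\dot y=g(t)y+h(t)$ has a unique $T$-periodic solution, and this solution is strictly positive whenever $h\ge 0$ and $h\not\equiv 0$. Indeed, with $\delta=-\int_{0}^{T}g\,dt>0$ the $T$-periodic solution is
\[
y(t)=\frac{e^{-\delta}}{1-e^{-\delta}}\,e^{\int_{0}^{t}g}\!\int_{0}^{T}e^{-\int_{0}^{s}g}h(s)\,ds+e^{\int_{0}^{t}g}\!\int_{0}^{t}e^{-\int_{0}^{s}g}h(s)\,ds ,
\]
and every term on the right is positive. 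The second observation is that both systems are \emph{triangular}: in (\ref{inequ1}) the $p_{1}^{-}$–equation does not involve $p_{2}^{-}$, while in (\ref{inequ2}) the $p_{2}^{+}$–equation does not involve $p_{1}^{+}$. So in each case I would solve the self-contained scalar equation first and feed its (positive, $T$-periodic) solution as a forcing term into the remaining one. Since all the coefficients together with $p(t),q(t)$ from (\ref{p(t)}) are positive, continuous and $T$-periodic, all the forcing terms arising below are legitimate positive $T$-periodic functions.

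For (\ref{inequ1}): dividing the two inequalities by $p_{1}^{-}>0$ and $p_{2}^{-}>0$ and integrating over $[0,T]$ (using periodicity, and noting the second becomes strict because of the positive term $a_{2}p\,p_{1}^{-}$) forces $\lambda_{0}\le\overline{b_{1}q}-\overline{a_{1}p}$ and $\lambda_{0}<\overline{b_{2}q}$; the interval $\big(0,\min\{\overline{b_{1}q}-\overline{a_{1}p},\ \overline{b_{2}q}\}\big)$ is nonempty precisely because the first half of (\ref{A2}) gives $\overline{a_{1}p}<\overline{b_{1}q}$ (and $\overline{b_{2}q}>0$ is automatic). Fix $\lambda_{0}$ in this interval. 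Then $\int_{0}^{T}[a_{1}p-b_{1}q+\lambda_{0}]\,dt<0$, so let $p_{1}^{-}$ be the positive $T$-periodic solution of $\dot y=[a_{1}p-b_{1}q+\lambda_{0}]y+1$; it satisfies the first inequality of (\ref{inequ1}) with slack. Since also $\int_{0}^{T}[\lambda_{0}-b_{2}q]\,dt<0$ and $a_{2}p\,p_{1}^{-}$ is a positive forcing, let $p_{2}^{-}$ be the positive $T$-periodic solution of $\dot y=[\lambda_{0}-b_{2}q]y+a_{2}p\,p_{1}^{-}$; it satisfies the second inequality with equality. This produces the pair $(\lambda_{0},(p_{1}^{-},p_{2}^{-}))$.

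For (\ref{inequ2}): the same period-integration bookkeeping forces $\lambda_{1}\le\overline{a_{2}p}-\overline{b_{2}q}$ and $\lambda_{1}<\overline{a_{1}p}$, and $\big(0,\min\{\overline{a_{2}p}-\overline{b_{2}q},\ \overline{a_{1}p}\}\big)$ is nonempty by the second half of (\ref{A2}). Fix $\lambda_{1}$ there. Since $\int_{0}^{T}[\lambda_{1}+b_{2}q-a_{2}p]\,dt<0$, take $p_{2}^{+}$ to be the positive $T$-periodic solution of $\dot y=[\lambda_{1}+b_{2}q-a_{2}p]y+1$; then the $p_{2}^{+}$–inequality holds. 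Since $\int_{0}^{T}[\lambda_{1}-a_{1}p]\,dt<0$ and $b_{1}q\,p_{2}^{+}$ is a positive forcing, take $p_{1}^{+}$ to be the positive $T$-periodic solution of $\dot y=[\lambda_{1}-a_{1}p]y+b_{1}q\,p_{2}^{+}$; then the $p_{1}^{+}$–inequality holds. This gives $(\lambda_{1},(p_{1}^{+},p_{2}^{+}))$ and finishes the proof.

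The only genuinely delicate point is the choice of the two eigenvalue parameters. One is tempted to turn the inequalities into equalities using the maximal admissible value, e.g. $\lambda_{0}=\overline{b_{1}q}-\overline{a_{1}p}$, but then the $p_{2}^{-}$–equation would admit only a \emph{negative} periodic solution unless the uncontrolled inequality $\overline{b_{1}q}-\overline{a_{1}p}<\overline{b_{2}q}$ happened to hold. Leaving a strictly positive amount of slack — i.e. picking $\lambda_{0},\lambda_{1}$ strictly inside the stated intervals and allowing a positive forcing in each scalar ODE — keeps both relevant Floquet multipliers below $1$ simultaneously, which is exactly what makes the periodic solutions positive. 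Everything else is the routine variation-of-constants computation for scalar linear periodic ODEs.
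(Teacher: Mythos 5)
Your proof is correct and follows essentially the same route as the paper: the same admissible range $0<\lambda_{0}<\min\{\overline{b_{1}q-a_{1}p},\,\overline{b_{2}q}\}$ (and its analogue for $\lambda_{1}$), the same use of the triangular structure, and the same variation-of-constants formula for the positive $T$-periodic solution of the forced scalar equation — the paper's explicit $p_{2}^{-}(0)$ and $c_{0}(t)$ are exactly that formula. The only cosmetic difference is that you generate the slack in the $p_{1}^{-}$ (resp. $p_{2}^{+}$) inequality by adding the forcing $+1$, whereas the paper takes a pure exponential whose exponent is shifted by $\overline{b_{1}q-a_{1}p}\,t$, so that the inequality holds because $\lambda_{0}$ sits strictly below that average.
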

\begin{proof}
We only prove (\ref{inequ1}) since the proof of (\ref{inequ2}) is similar  and omitted here.
Take
$$
0<\lambda_{0}<\min \{\overline{ b_{1}(t)q(t)-a_{1}(t)p(t)}, \overline{b_2(t)q(t)} \},
$$
and
\begin{equation*}
\left\{
\begin{array}{lr}
p_{1}^{-}(t)=\text{exp}\left(\int_{0}^t a_{1}(\tau)\,p(\tau)\,-b_{1}(\tau)\,q(\tau)\,d\tau+\overline{b_1q-a_1p} \,t\right),\\
p_{2}^{-}(t)=\left(c_{0}(t)+p_{2}^{-}(0)\right)\text{exp}\left(-\int_0^t b_{2}(\tau)q(\tau)d\tau +\lambda_{0}t\right),\\[2mm]
\end{array}
\right.
\end{equation*}
where
\begin{equation*}
\left\{
\begin{array}{lr}
p_{1}^{-}(0)=1,\\
p_{2}^{-}(0)=\dfrac{\int_0^T a_{2}(t)\,p(t)\,p_{1}^{-}(t)\text{exp}\left(\int_0^t b_{2}(\tau)q(\tau)d\tau -\lambda_{0}t\right) dt}{\text{exp}\left(\int_0^T b_{2}(t)q(t)dt -\lambda_{0}T\right)-1}, \\[4mm]
c_{0}(t)=\int_{0}^{t} a_{2}(s)p(s)p_{1}^{-}(s)\text{exp}\left(\int_0^s b_{2}(\tau)q(\tau)d\tau-\lambda_{0}s\right) \,ds.
\end{array}
\right.
\end{equation*} Then it is easy to check that  (\ref{inequ1}) is true.
\end{proof}
We next apply the eigenvalues $\lambda_0, \lambda_1 $ and eigenfunctions $(p_{1}^{-}(t),p_{2}^{-}(t))$ and $(p_{1}^{+}(t),p_{2}^{+}(t))$ to construct upper and lower solutions of the system  (\ref{coo sys}).
\begin{lemma}\label{upper and lower lemma}
 Assume that (\ref{A2}) holds and  there exists  $(c, \Phi(z,t),\Psi(z,t))$ as a traveling wave solution of (\ref{coo sys}).
  Then there exist positive constants $\sigma_{1}$, $ \rho$, $\delta$, real numbers $\kappa^{\pm} \in \mathbb{R}$  and positive  functions $p_{1}(x,t), p_{2}(x,t)$
such that
\begin{eqnarray}\label{upper and lower}
&\phi^{\pm}(x,t)=\Phi\left(x+ct+\kappa^{\pm}\pm\sigma_{1}\delta(1-e^{-\rho t}),t\right)\pm\delta p_{1}\left(x+ct+\kappa^{\pm}\pm\sigma_{1}\delta(1-e^{-\rho t}),t\right)e^{-\rho t},\nonumber\\
&\psi^{\pm}(x,t)=\Psi\left(x+ct+\kappa^{\pm}\pm\sigma_{1}\delta(1-e^{-\rho t}),t\right)\pm\delta p_{2}\left(x+ct+\kappa^{\pm}\pm\sigma_{1}\delta(1-e^{-\rho t}),t\right)e^{-\rho t}\nonumber
\end{eqnarray}
  are upper-lower solutions of (\ref{coo sys}) for  $(x,t)\in\mathbb{R}\times(0,\infty)$.
\end{lemma}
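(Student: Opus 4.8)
The plan is to run a Fife--McLeod-type perturbation scheme, adapted to the $T$-periodic nonlocal setting, with the eigenpairs $(\lambda_{0},(p_{1}^{-},p_{2}^{-}))$ and $(\lambda_{1},(p_{1}^{+},p_{2}^{+}))$ from Lemma~\ref{lecc} supplying the decay near $\mathbf{o}$ and near $\beta$. Write $\zeta^{\pm}:=x+ct+\kappa^{\pm}\pm\sigma_{1}\delta(1-e^{-\rho t})$, so that $\dot\zeta^{\pm}-c=\pm\sigma_{1}\delta\rho e^{-\rho t}$; the candidates are exactly those in the statement, $\phi^{\pm}=\Phi(\zeta^{\pm},t)\pm\delta p_{1}(\zeta^{\pm},t)e^{-\rho t}$ and $\psi^{\pm}=\Psi(\zeta^{\pm},t)\pm\delta p_{2}(\zeta^{\pm},t)e^{-\rho t}$. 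The shifts $\kappa^{\pm}$ are immaterial here (the problem is translation invariant in $x$) and are kept only because the uniqueness proof will need to slide the sub/supersolution; the remaining constants are fixed in the order $\rho\to M\to\sigma_{1}\to\delta$, with $0<\rho<\min\{\lambda_{0},\lambda_{1}\}$. Substituting $(\phi^{+},\psi^{+})$ into the upper-solution inequality for (\ref{coo sys}) and using the profile equation (\ref{wave sys}) to eliminate $d_{1}(t)[J_{1}\ast\Phi-\Phi]$, the defect $\mathcal{D}_{1}:=\phi^{+}_{t}-d_{1}(t)[J_{1}\ast\phi^{+}-\phi^{+}]-f_{1}(\phi^{+},\psi^{+},t)$ decomposes, at $\zeta=\zeta^{+}$, as
\[
\mathcal{D}_{1}=\underbrace{\Phi_{z}(\zeta,t)\,\sigma_{1}\delta\rho e^{-\rho t}}_{\mathrm{(a)}}+\underbrace{\delta e^{-\rho t}\bigl[\partial_{t}p_{1}+\dot\zeta^{+}p_{1,z}-\rho p_{1}-d_{1}(t)(J_{1}\ast p_{1}-p_{1})\bigr](\zeta,t)}_{\mathrm{(b)}}-\underbrace{\bigl[f_{1}(\phi^{+},\psi^{+},t)-f_{1}(\Phi,\Psi,t)\bigr]}_{\mathrm{(c)}},
\]
and I must show $\mathcal{D}_{1}\ge 0$, with the analogous identity for the second equation and all inequalities reversed for $(\phi^{-},\psi^{-})$, where the backward shift makes (a) enter with the sign favourable to a subsolution.

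Next I would build the weights $p_{1},p_{2}$. By Theorem~\ref{existence}, fix $M>0$ so that $\theta:=\min\{\partial_{z}\Phi(z,t),\partial_{z}\Psi(z,t):|z|\le M,\ t\in[0,T]\}>0$, and, enlarging $M$, also $\|(\Phi,\Psi)(z,t)-\mathbf{o}\|\le\varepsilon(M)$ for $z\le-M$ and $\|(\Phi,\Psi)(z,t)-\beta\|\le\varepsilon(M)$ for $z\ge M$, with $\varepsilon(M)\to0$. Then take each $p_{i}(z,t)$ bounded, $T$-periodic in $t$, $C^{1}$ with bounded derivative in $z$, bounded below by a positive constant, equal to a constant $\hat C$ on a subinterval of $[-M,M]$, equal to $p_{i}^{-}(t)$ for $z\le-M$, and equal to $p_{i}^{+}(t)$ for $z\ge M$. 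Such an interpolation exists because all the prescribed pieces are strictly positive, and the point is that its $z$-transition is confined to $[-M,M]$, where $\Phi_{z},\Psi_{z}\ge\theta$; this placement is what keeps the later estimates clean.

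The verification then splits by the location of $\zeta^{+}$. For $\zeta^{+}\le-M$ one has $p_{i}=p_{i}^{-}(t)$ (hence $p_{i,z}=0$, $J_{i}\ast p_{i}=p_{i}$) and $\|(\Phi,\Psi)(\zeta^{+},t)\|\le\varepsilon(M)$, so Taylor expansion gives $f_{1}(\phi^{+},\psi^{+},t)-f_{1}(\Phi,\Psi,t)=Df_{1}(\mathbf{o},t)\cdot(\delta p_{1}^{-},\delta p_{2}^{-})^{\top}e^{-\rho t}+O(\varepsilon(M)\delta e^{-\rho t})+O(\delta^{2}e^{-2\rho t})$; substituting the row $Df_{1}(\mathbf{o},t)=(a_{1}p-b_{1}q,\,0)$, combining (b) and (c), and using the first line of (\ref{inequ1}) leaves $\mathcal{D}_{1}\ge(\lambda_{0}-\rho)\,p_{1}^{-}\delta e^{-\rho t}+O(\varepsilon(M)\delta e^{-\rho t})+O(\delta^{2}e^{-2\rho t})\ge0$ once $M$ is large and $\delta$ small, since (a)$\ge0$. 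The second equation uses the second line of (\ref{inequ1}); the region $\zeta^{+}\ge M$ is symmetric, with $\mathbf{o},\lambda_{0},(p_{i}^{-})$ replaced by $\beta,\lambda_{1},(p_{i}^{+})$ and (\ref{inequ1}) replaced by (\ref{inequ2}). On the compact core $|\zeta^{+}|\le M$ the eigenvalue inequalities are \emph{not} needed: here (b) is $O(\delta e^{-\rho t})+O(\sigma_{1}\delta^{2}e^{-2\rho t})$ and (c) is $O(\delta e^{-\rho t})$, with constants depending only on $M,\rho,\hat C$, whereas (a)$\ge\theta\sigma_{1}\rho\,\delta e^{-\rho t}$; taking $\sigma_{1}$ large and then $\delta$ small makes (a) dominate, so $\mathcal{D}_{1}\ge0$. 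Reversing all inequalities yields the subsolution $(\phi^{-},\psi^{-})$, which finishes the proof.

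The part I expect to be the main obstacle is producing the uniform lower bound $\theta>0$ for the spatial derivatives of the profile on compact $z$-sets. When $c\neq0$ this is immediate, since $\partial_{z}\Phi,\partial_{z}\Psi$ are continuous (by the Remark following Theorem~\ref{existence}) and strictly positive (by the theorem), hence have a positive minimum on $[-M,M]\times[0,T]$. When $c=0$, where that continuity is not known, this step must be replaced---for instance by approximating the standing wave with waves of nonzero speed and passing to the limit, or by exploiting $T$-periodicity in $t$ together with the monotone structure of the one-sided difference quotients to extract a positive lower bound on the core. A secondary, purely bookkeeping, concern---and the reason both for the ordering $\rho\to M\to\sigma_{1}\to\delta$ and for confining the $z$-transition of $p_{1},p_{2}$ to where $\Phi_{z},\Psi_{z}$ are already bounded below---is to keep the coefficient of $\sigma_{1}\delta^{2}e^{-2\rho t}$ appearing in (b) independent of $\sigma_{1}$, so that choosing $\delta$ last does not disturb the earlier choices.
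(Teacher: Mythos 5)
Your proposal is correct and follows essentially the same route as the paper: you interpolate the periodic eigenfunctions of Lemma \ref{lecc} across a window $[-M,M]$, and verify the defect inequalities in three regions, using (\ref{inequ1})--(\ref{inequ2}) to absorb the linearized reaction terms on the two outer regions and letting the term $\sigma_{1}\rho\delta e^{-\rho t}\,\partial_z\Phi$ dominate on the core after choosing $\sigma_{1}$ large and then $\delta$ small, which is exactly the paper's three-case argument with the constants $C_{0}$--$C_{3}$. Your concern about a uniform positive lower bound for $\partial_z\Phi,\partial_z\Psi$ on the core when $c=0$ is shared by the paper itself, which simply takes $C_{1}>0$ from Theorem \ref{existence} without further comment.
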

\begin{proof}
Define a continuous function  $\zeta(x)$ by
$$
\zeta(x)=\begin{cases}
~0,~~~x<-M,\\
~1,~~~x>M,
\end{cases}
$$
where $M$ is a large  positive constant and $0\leqslant \zeta'(x)\leqslant 1$ for $x \in \mathbb{R}.$ Furthermore, define $p(x,t)=(p_{1}(x,t),p_{2}(x,t))$ as follows:
$$
p_{1}(x,t)=\zeta(x)p_{1}^{+}(t)+(1-\zeta(x))p_{1}^{-}(t),$$
$$
p_{2}(x,t)=\zeta(x)p_{2}^{+}(t)+(1-\zeta(x))p_{2}^{-}(t)$$
and
$$\xi^{\pm}(x,t)=x+ct+\kappa^{\pm}\pm\sigma_{1}\delta(1-e^{-\rho t}).$$
Then we can re-write
\begin{equation}\label{special upper and lower}
\begin{array}{lr}
\phi^{\pm}(x,t)=\Phi(\xi^{\pm},t)\pm\delta p_{1}(\xi^{\pm},t)e^{-\rho t},\\
\psi^{\pm}(x,t)=\Psi(\xi^{\pm},t)\pm\delta p_{2}(\xi^{\pm},t)e^{-\rho t}.
\end{array}
\end{equation}
We only show that $(\phi^{+}(x,t),\psi^{+}(x,t))$ is an upper solution of system (\ref{coo sys}). The proof of the lower solution is similar and is omitted here.
To this end, several  notations are first given by
\[d=\max_{t \in [0,T]}\left\lbrace d_{1}(t),d_{2}(t)\right\rbrace,\]
\[C_{0}=\max\left\lbrace  \max_{\xi^+ \in \mathbb{R},t \in [0,T]}\vert\dfrac{\partial}{\partial t}p_{1}(\xi^{+},t)\vert,\quad \max_{\xi^+ \in \mathbb{R},t \in [0,T]}\vert\dfrac{\partial}{\partial t}p_{2}(\xi^{+},t)\vert\right\rbrace,  \]
\[C_{1}=\min\left\lbrace  \inf_{\xi^+ \in [-M,M],t \in [0,T]}\dfrac{\partial \Phi}{\partial \xi^+}(\xi^+,t),\inf_{\xi^+ \in [-M,M],t \in [0,T]}\dfrac{\partial \Psi}{\partial \xi^+}(\xi^+,t)\right\rbrace,\]
\[C_{2}=\max\left\lbrace  \max_{t \in [0,T]}(a_{1}(t)p(t)+b_{1}(t)q(t)),\max_{t \in [0,T]}(a_{2}(t) p(t)+b_{2}(t)q(t))\right\rbrace,\]
\[C_{3}=\max\left\lbrace  \max_{t \in [0,T]}\vert p_{1}^{+}(t)-p_{1}^{-}(t)\vert, \max_{t \in [0,T]}\vert p_{1}^{+}(t)+p_{1}^{-}(t)\vert, \max_{t \in [0,T]}  \vert p_{2}^{+}(t)-p_{2}^{-}(t)\vert, \max_{t \in [0,T]}\vert p_{2}^{+}(t)+p_{2}^{-}(t)\vert\right\rbrace.\]
We shall prove the result on three value intervals of $\xi^{+}(x,t) \in \mathbb{R}$.
\item[(i):] $\xi^{+}(x,t)\leqslant -M.$ According to the definition of the function $\zeta(x),$ we have
$$\zeta(\xi^{+})=0, \ p_{1}(\xi^{+},t)=p_{1}^{-}(t) \ \text{ and} \   \ p_{2}(\xi^{+},t)=p_{2}^{-}(t).$$
Then
$$
\phi^{+}(x,t)=\Phi(\xi^{+},t)+\delta p_{1}^{-}(t)e^{-\rho t},\quad
\psi^{+}(x,t)=\Psi(\xi^{+},t)+\delta p_{2}^{-}(t)e^{-\rho t}.
$$
Substituting these expressions into the first equation of system (\ref{coo sys}) and using Lemma \ref{lecc} give
\begin{eqnarray*}
\begin{aligned}
&d_{1}(t)[J_{1}*\phi^{+}(x,t)-\phi^{+}(x,t)]-\phi^{+}_{t}+\phi^{+}[\,a_{1}(t)\,p(t)\,(1-\phi^{+})-b_{1}(t)\,q(t)\,(1-\psi^{+})] \\
=d_{1}(t)&\left\lbrace \int_\mathbb{R} J_{1}(y)[\Phi(\xi^{+}-y,t)+\delta p_{1}^{-}(t)e^{-\rho t}] dy -\Phi(\xi^{+},t)-\delta p_{1}^{-}(t)e^{-\rho t}\right\rbrace \\
&-\left[\Phi_{\xi^{+}}\xi^{+}_{t}+\Phi_{t}+\delta e^{-\rho t}\dfrac{d p_{1}^{-}(t)}{d t}-\delta \rho e^{-\rho t}p_{1}^{-}(t)\right] +\left(\Phi(\xi^{+},t)+\delta p_{1}^{-}(t)e^{-\rho t}\right)[\,a_{1}(t)\,p(t)\,\\
&\left(1-\Phi(\xi^{+},t)-\delta p_{1}^{-}(t)e^{-\rho t}\right)-b_{1}(t)\,q(t)\,\left(1-\Psi(\xi^{+},t)-\delta p_{2}^{-}(t)e^{-\rho t}\right)] \\
\leq d_{1}(t)&\int_\mathbb{R} J_{1}(y)[\Phi(\xi^{+}-y,t)-\Phi(\xi^{+},t)]dy-c\Phi_{\xi^{+}}-\Phi_{t}+\Phi[\,a_{1}(t)\,p(t)\,(1-\Phi)\\
&-b_{1}(t)\,q(t)\,(1-\Psi)]-\sigma_{1} \delta \rho e^{-\rho t}\Phi_{\xi^{+}}-\delta e^{-\rho t}[\,a_{1}(t)\,p(t)\,-b_{1}(t)\,q(t)\,+\lambda_{0}]p_{1}^{-}(t)\\
&+\delta \rho e^{-\rho t}p_{1}^{-}(t) +\,(\Phi+\delta p_{1}^{-}(t)e^{-\rho t})[\,a_{1}(t)\,p(t)\,(1-\Phi-\delta p_{1}^{-}(t)e^{-\rho t})\\
&-b_{1}(t)\,q(t)\,(1-\Psi-\delta p_{2}^{-}(t)e^{-\rho t})]-\Phi[\,a_{1}(t)\,p(t)\,(1-\Phi)-b_{1}(t)\,q(t)\,(1-\Psi)]\\
=-\delta e^{-\rho t}&\left(\sigma_{1} \rho \Phi_{\xi^{+}}+[\,a_{1}(t)\,p(t)\,-b_{1}(t)\,q(t)\,+\lambda_{0}-\rho]p_{1}^{-}(t)+\,\Phi[a_{1}(t)\,p(t)p_{1}^{-}(t)-b_{1}(t)\,q(t)p_{2}^{-}(t)]\right)\\
&+\delta e^{-\rho t}\left(a_{1}(t)\,p(t)\,(1-\Phi-\delta e^{-\rho t}p_{1}^{-}(t))-b_{1}(t)\,q(t)\,(1-\Psi-\delta e^{-\rho t}p_{2}^{-}(t))\right)p_{1}^{-}(t)\\
\stackrel{def}{=}\Pi.&&
\end{aligned}
\end{eqnarray*}
Here the last equality holds by using
\begin{equation}\label{c1}
d_{1}(t)\int_\mathbb{R} J_{1}(y)[\Phi(\xi^{+}-y,t)-\Phi(\xi^{+},t)] dy -c\Phi_{\xi^{+}}-\Phi_{t}+\Phi[\,a_{1}(t)\,p(t)\,(1-\Phi)-b_{1}(t)\,q(t)\,(1-\Psi)]=0.
\end{equation}
Recall that when $M$ is sufficiently large (i.e., $\xi^{+}(x,t)$ is negative enough), $(\Phi(\xi^{+},t),\Psi(\xi^{+},t))\rightarrow (0,0),$  and let $\rho$ be small enough to have
$$
\Pi\rightarrow \delta p_{1}^{-}(t)e^{-\rho t}\left\lbrace \rho-\lambda_{0}+\delta e^{-\rho t}[b_{1}(t)\,q(t)p_{2}^{-}(t)-a_{1}(t)\,p(t)p_{1}^{-}(t)]\right\rbrace\leqslant0.
$$
For the second equation in system (\ref{coo sys}), by Lemma \ref{lecc}, we have
\begin{eqnarray*}
\begin{aligned}
&d_{2}(t)[J_{2}*\psi^{+}(x,t)-\psi^{+}(x,t)]-\psi^{+}_{t}+(1-\psi^{+})\left[\,a_{2}(t)\,p(t)\,\phi^{+}-b_{2}(t)\,q(t)\,\psi^{+}\,\right] \\
=&d_{2}(t)\left\lbrace \int_\mathbb{R} J_{2}(y)[\Psi(\xi^{+}-y,t)+\delta p_{2}^{-}(t)e^{-\rho t}] dy -\Psi(\xi^{+},t)-\delta p_{2}^{-}(t)e^{-\rho t}\right\rbrace \\
&-\left(\Psi_{\xi^{+}}\xi^{+}_{t}+\Psi_{t}+\delta e^{-\rho t}\dfrac{d p_{2}^{-}(t)}{d t}-\delta \rho e^{-\rho t}p_{2}^{-}(t)\right) +\,\left(1-\Psi(\xi^{+},t)-\delta p_{2}^{-}(t)e^{-\rho t}\right)\\
&\left(\,a_{2}(t)\,p(t)\,[\Phi(\xi^{+},t)+\delta p_{1}^{-}(t)e^{-\rho t}]-b_{2}(t)\,q(t)\,[\Psi(\xi^{+},t)+\delta p_{2}^{-}(t)e^{-\rho t}]\right),\\
\leq&d_{2}(t)\left\lbrace \int_\mathbb{R} J_{2}(y)[\Psi(\xi^{+}-y,t) -\Psi(\xi^{+},t)] dy\right\rbrace-{c}{\Psi}_{\xi^{+}}-\Psi_{t}+(1-{\Psi})[a_{2}(t)p(t){\Phi}\\
&-b_{2}(t)q(t){\Psi}]-\sigma_{1} \delta \rho e^{-\rho t}\Psi_{\xi^{+}}-\delta e^{-\rho t}[a_{2}(t)\,p(t)\,p_{1}^{-}(t)\,+(\lambda_{0}-b_{2}(t)\,q(t))\,p_{2}^{-}(t)]\\
&+\delta \rho e^{-\rho t}p_{2}^{-}(t)+(1-\Psi)[a_{2}(t)\,p(t)\,(\Phi(\xi^{+},t)+\delta p_{1}^{-}(t)e^{-\rho t})-b_{2}(t)\,q(t)\,(\Psi(\xi^{+},t)\\
&+\delta p_{2}^{-}(t)e^{-\rho t})]-\delta  e^{-\rho t}p_{2}^{-}(t)[a_{2}(t)\,p(t)\,(\Phi(\xi^{+},t)+\delta p_{1}^{-}(t)e^{-\rho t})-b_{2}(t)\,q(t)\,(\Psi(\xi^{+},t)\\
&+\delta p_{2}^{-}(t)e^{-\rho t})]-(1-{\Psi})[a_{2}(t)p(t){\Phi}-b_{2}(t)q(t){\Psi}]\\
\rightarrow &\delta p_{2}^{-}(t)e^{-\rho t}\left\lbrace \rho-\lambda_{0}+\delta e^{-\rho t}[b_{2}(t)\,q(t)p_{2}^{-}(t)-a_{2}(t)\,p(t)p_{1}^{-}(t)]\right\rbrace\leqslant0. \\
\end{aligned}
\end{eqnarray*}
The above inequality holds when $\rho$ is small enough.
\item[(ii):] $-M\leqslant\xi^{+}(x,t)\leqslant M.$ By observing the definition of $C_{3},$ we have
 $$
\mid \dfrac{\partial p_{1}(\xi^{+},t)}{\partial \xi^{+}}\mid=\mid \dfrac{\partial \zeta(\xi^{+},t)}{\partial \xi^{+}}\mid\cdot\mid p_{1}^{+}(t)-p_{1}^{-}(t)\mid\leqslant \mid p_{1}^{+}(t)-p_{1}^{-}(t)\mid\leqslant C_{3},  $$
   $$
\mid \dfrac{\partial p_{2}(\xi^{+},t)}{\partial \xi^{+}}\mid=\mid \dfrac{\partial \zeta(\xi^{+},t)}{\partial \xi^{+}}\mid\cdot\mid p_{2}^{+}(t)-p_{2}^{-}(t)\mid\leqslant \mid p_{2}^{+}(t)-p_{2}^{-}(t)\mid\leqslant C_{3},$$
   $$
\mid p_{1}(\xi^{+},t)\mid=\mid \zeta(\xi^{+}) p_{1}^{+}(t)+(1-\zeta(\xi^{+}))p_{1}^{-}(t)\mid\leqslant \mid p_{1}^{+}(t)+p_{1}^{-}(t)\mid\leqslant C_{3},$$
   $$
\mid p_{2}(\xi^{+},t)\mid=\mid \zeta(\xi^{+}) p_{2}^{+}(t)+(1-\zeta(\xi^{+}))p_{2}^{-}(t)\mid\leqslant \mid p_{2}^{+}(t)+p_{2}^{-}(t)\mid\leqslant C_{3}.$$
We now give the following estimates in terms of   $C_{2}$ and $C_{3}$.
\begin{eqnarray*}
\begin{aligned}
&\phi^{+}[\,a_{1}(t)\,p(t)\,(1-\phi^{+})-b_{1}(t)\,q(t)\,(1-\psi^{+})]-\Phi[\,a_{1}(t)\,p(t)\,(1-\Phi)-b_{1}(t)\,q(t)\,(1-\Psi)]\\
=&\delta e^{-\rho t}p_{1}(\xi^{+},t)\lbrace a_{1}(t)p(t)-b_{1}(t)q(t)+b_{1}(t)\,q(t)\Psi-2a_{1}(t)p(t)\Phi-a_{1}(t)p(t)\delta e^{-\rho t}p_{1}(\xi^{+},t)\rbrace\\
&+\delta e^{-\rho t}p_{2}(\xi^{+},t)b_{1}(t)q(t)\lbrace\Phi+\delta e^{-\rho t}p_{1}(\xi^{+},t)\rbrace\\
\leq& \delta e^{-\rho t}C_3\lbrace 3a_{1}(t)p(t)+3 b_{1}(t)q(t)+ a_{1}(t)p(t)\delta e^{-\rho t} C_3+b_{1}(t)q(t)\delta e^{-\rho t}C_3\rbrace\\
\leq& \delta e^{-\rho t}C_3C_2( 3+2\delta C_3).\\
\end{aligned}
\end{eqnarray*}
The second estimate is as follows:
\begin{eqnarray*}
\begin{aligned}
&(1-\psi^{+})\left[\,a_{2}(t)\,p(t)\,\phi^{+}-b_{2}(t)\,q(t)\,\psi^{+}\,\right]-(1-\Psi)\left[\,a_{2}(t)\,p(t)\,\Phi-b_{2}(t)\,q(t)\,\Psi\,\right] \\
=&\left[1-\Psi-\delta e^{-\rho t}p_{2}(\xi^{+},t)\right]a_{2}(t)\,p(t)\,\delta e^{-\rho t}p_{1}(\xi^{+},t)+\lbrace 2\,b_{2}(t)\,q(t)\left(\Psi+ \frac{1}{2}\delta e^{-\rho t}p_{2}(\xi^{+},t)\right)\\
&-b_{2}(t)\,q(t)-a_{2}(t)\,p(t)[\Phi+\delta e^{-\rho t}p_{1}(\xi^{+},t)]\rbrace \delta e^{-\rho t}p_{2}(\xi^{+},t)\\
\leq& \delta e^{-\rho t}C_3\lbrace2C_2+b_{2}(t)\,q(t)+a_{2}(t)\,p(t)\delta C_3+C_2C_3\delta\rbrace\\
\leq& \delta e^{-\rho t}C_3C_2( 3+2\delta C_3).
\end{aligned}
\end{eqnarray*}
By using  $p_{1}(\xi^{+},t)=\zeta(\xi^{+})p_{1}^{+}(t)+(1-\zeta(\xi^{+}))p_{1}^{-}(t)$, we have the third estimate:
\begin{eqnarray*}
\begin{aligned}
&\vert d_{1}(t)\left\lbrace \int_{\mathbb{R}}J_{1}(y)p_{1}(\xi^+-y,t)dy-p_{1}(\xi^+,t)\right\rbrace \vert \\
=&\vert d_{1}(t)\left\lbrace \int_{\mathbb{R}}J_{1}(y)[\zeta(\xi^{+}-y)p_{1}^{+}(t)+p_{1}^{-}(t)-\zeta(\xi^{+}-y)p_{1}^{-}(t)]dy-\zeta(\xi^{+})p_{1}^{+}(t)-p_{1}^{-}(t)
+\zeta(\xi^{+})p_{1}^{-}(t)\right\rbrace \vert \\
=&\vert d_{1}(t)\left\lbrace \int_{\mathbb{R}}J_{1}(y)[\zeta(\xi^{+}-y)p_{1}^{+}(t)-\zeta(\xi^{+}-y)p_{1}^{-}(t)]dy-\zeta(\xi^{+})p_{1}^{+}(t)+\zeta(\xi^{+})p_{1}^{-}(t)\right\rbrace \vert \\
=&\vert d_{1}(t)\vert \cdot \vert\left[ \int_{\mathbb{R}}J_{1}(y)\zeta(\xi^{+}-y)dy-\zeta(\xi^{+})\right]p_{1}^{+}(t)- \left[\int_{\mathbb{R}}J_{1}(y)\zeta(\xi^{+}-y)dy-\zeta(\xi^{+})\right]p_{1}^{-}(t) \vert \\
\leq & 2\,d\,(p_{1}^{+}(t)+p_{1}^{-}(t))\leq 2\,d\,C_{3}.
\end{aligned}
\end{eqnarray*}
Similarly, it follows that
$$
\vert d_{2}(t)\left\lbrace \int_{\mathbb{R}}J_{2}(y)p_{2}(\xi^{+}-y,t)-p_{2}(\xi^{+},t)dy\right\rbrace \vert \leqslant 2\,d\,C_{3}.
$$
Consequently, applying the above estimates and (\ref{c1}), and further taking $\delta\leq \frac{C_1}{2C_3}$ and
$$
\sigma_{1}\geqslant \dfrac{2C_{0}+2C_{3}(3C_{2}+\vert c \vert+\rho+2d+C_{2}C_{1})}{C_{1}\rho},
$$
we have
\begin{eqnarray*}
\begin{aligned}
&d_{1}(t)[J_{1}*\phi^{+}(x,t)-\phi^{+}(x,t)]-\phi^{+}_{t}+\phi^{+}\left[\,a_{1}(t)\,p(t)\,(1-\phi^{+})-b_{1}(t)\,q(t)\,(1-\psi^{+})\,\right] \\
=&d_{1}(t)\left[\int_{\mathbb{R}}J_{1}(y)(\Phi(\xi^{+}-y,t)+\delta p_{1}(\xi^{+}-y,t)e^{-\rho t})dy-(\Phi(\xi^{+},t)+\delta p_{1}(\xi^{+},t)e^{-\rho t})\right]\\
&-\left\lbrace\Phi_{\xi^{+}}\xi_{t}+\Phi_{t}+\delta e^{-\rho t}\left[ \dfrac{d p_{1}(\xi^{+},t)}{d \xi}(c+\sigma_{1}\rho \delta e^{-\rho t})+\dfrac{d p_{1}(\xi^{+},t)}{d t}\right]+\delta e^{-\rho t} (-\rho) p_{1}(\xi^{+},t)\right\rbrace\\
&+\phi^{+}[\,a_{1}(t)\,p(t)\,(1-\phi^{+})-b_{1}(t)\,q(t)\,(1-\psi^{+})\,] \\
=&\phi^{+}[\,a_{1}(t)\,p(t)\,(1-\phi^{+})-b_{1}(t)\,q(t)\,(1-\psi^{+})]-\Phi[\,a_{1}(t)\,p(t)\,(1-\Phi)-b_{1}(t)\,q(t)\,(1-\Psi)]\\
&+d_{1}(t)\left\lbrace \int_{\mathbb{R}}J_{1}(y)\left(p_{1}(\xi^{+}-y,t)-p_{1}(\xi^{+},t)\right)dy\right\rbrace \delta e^{-\rho t}-\Phi_{\xi^{+}}\sigma_{1}\rho \delta e^{-\rho t}\\ &-\delta e^{-\rho t}\left\lbrace \dfrac{d p_{1}(\xi^{+},t)}{d \xi}(c+\sigma_{1}\rho \delta e^{-\rho t})+\dfrac{d p_{1}(\xi^{+},t)}{d t}\right\rbrace +\delta e^{-\rho t} \rho p_{1}(\xi^{+},t)\\
\leqslant &\delta e^{-\rho t} \left( C_3C_2( 3+2\delta C_3)+2\,d\,C_{3}-C_{1}\sigma_{1}\rho+\vert c \vert \, C_{3}+\sigma_{1}\rho\delta C_{3} e^{-\rho t}+C_{0}+\rho C_{3}\right)\leq 0.
\end{aligned}
\end{eqnarray*}
Likewise, it can be verified that
\begin{eqnarray*}
\begin{aligned}
&d_{2}(t)[J_{2}*\psi^{+}(x,t)-\psi^{+}(x,t)]-\psi^{+}_{t}+(1-\psi^{+})\left[\,a_{2}(t)\,p(t)\,\phi^{+}-b_{2}(t)\,q(t)\,\psi^{+}\,\right]\\
\leqslant &\delta e^{-\rho t} \left( C_3C_2( 3+2\delta C_3)+2\,d\,C_{3}-C_{1}\sigma_{1}\rho+\vert c \vert \, C_{3}+\sigma_{1}\rho\delta C_{3} e^{-\rho t}+C_{0}+\rho C_{3}\right)\leq 0.
\end{aligned}
\end{eqnarray*}
 For case \rm(iii): $\xi^{+}(x,t)\geqslant M$, the result can be  proved by using the same method as in case \rm(i). Hence $(\phi^{+},\psi^{+})$ is an upper solution of system (\ref{coo sys}). Then the proof is complete.
 \end{proof}

 \begin{remark}
{ System (\ref{coo sys}) is monotone only in the phase space
$$\mathbb{W}=\{(\phi, \psi)|\phi, \psi\in \mathcal{C}, \phi\ge 0 \ \text{and} \ \psi\le 1 \}.$$ When  $\phi^-\not\in \mathbb{W}$ or $\psi^+\not\in \mathbb{W}$,  we  can use their truncations $\hat \phi^-=\max \{0, \phi^-\}$ and $\hat \psi^+=\min \{1, \psi^+ \}$ to replace $\phi^-$ and $\psi^+$, respectively, so that the comparison principle in Lemma \ref{comparison} still works for the upper and lower solutions in Lemma \ref{upper and lower lemma}.}
 \end{remark}
We are in a position now to state and prove the uniqueness of the bistable time-periodic traveling wave {if it exists.}
\begin{theorem}\label{uniquenes}
Assume that (\ref{A2}) holds. Then there exists at most one (up to translation) bistable  time-periodic traveling wave solution to (\ref{wave sys})-(\ref{1.8}).
\end{theorem}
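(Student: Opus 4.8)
The plan is a squeezing argument built on the exponentially relaxing upper and lower solutions of Lemma~\ref{upper and lower lemma} and the comparison principle of Lemma~\ref{comparison}. Suppose $(c_1,\Phi_1,\Psi_1)$ and $(c_2,\Phi_2,\Psi_2)$ are two bistable time-periodic traveling waves of (\ref{wave sys})--(\ref{1.8}); being monotone bistable waves, both profiles are continuous, nondecreasing in $z$, $T$-periodic in $t$, and attain the limits in (\ref{1.8}) uniformly in $t\in[0,T]$ (cf.\ Theorem~\ref{existence}). Set $U_i(x,t)=(\Phi_i(x+c_it,t),\Psi_i(x+c_it,t))$; each $U_i$ solves (\ref{coo sys}) and is therefore at once a lower and an upper solution of it. I would begin by trapping $U_2$ between the lower/upper solutions built on $(c_1,\Phi_1,\Psi_1)$. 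Apply Lemma~\ref{upper and lower lemma} to that wave to get $\rho,\sigma_1>0$ and positive functions $p_1,p_2$, bounded below by some $m>0$ since they are convex combinations of the strictly positive periodic functions $p_i^{\pm}$ of Lemma~\ref{lecc}. For small $\delta>0$ and shifts $\kappa^{\pm}$, let $(\phi^{\pm}_\delta,\psi^{\pm}_\delta)$ be the corresponding upper/lower solutions, which at $t=0$ read $\Phi_1(\cdot+\kappa^{\pm},0)\pm\delta p_1(\cdot+\kappa^{\pm},0)$ and $\Psi_1(\cdot+\kappa^{\pm},0)\pm\delta p_2(\cdot+\kappa^{\pm},0)$. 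Using only $p_i\ge m$, $0\le\Phi_i,\Psi_i\le 1$, and the limits in (\ref{1.8}), a routine partition of $\mathbb{R}$ into a far zone, where the $\delta$-buffer already dominates, and a near zone, where a large positive $\kappa^+$ (resp.\ large negative $\kappa^-$) places $\Phi_1,\Psi_1$ within $\delta m$ of $1$ (resp.\ of $0$), shows that for each such $\delta$ there are $\kappa^-(\delta)\ll 0$ and $\kappa^+(\delta)\gg 0$ with
\[
\phi^-_\delta(x,0)\le\Phi_2(x,0)\le\phi^+_\delta(x,0),\qquad \psi^-_\delta(x,0)\le\Psi_2(x,0)\le\psi^+_\delta(x,0),\qquad x\in\mathbb{R};
\]
where $\phi^-_\delta$ becomes negative or $\psi^+_\delta$ exceeds $1$ I would first replace them by $\max\{0,\phi^-_\delta\}$ and $\min\{1,\psi^+_\delta\}$, as in the Remark after Lemma~\ref{upper and lower lemma}, so that Lemma~\ref{comparison} remains applicable.

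By Lemma~\ref{comparison} this ordering persists for all $t\ge 0$. Evaluating it at $t=nT$, using the $T$-periodicity of $\Phi_1,\Psi_1$ in $t$, the substitution $z=x+c_2nT$, $e^{-\rho nT}\to 0$, and the continuity of $\Phi_1,\Psi_1$ in $z$, and then letting $n\to\infty$, I would obtain
\[
\Phi_1\!\big(z+(c_1-c_2)nT+\kappa^-(\delta)-\sigma_1\delta,\,0\big)+o(1)\ \le\ \Phi_2(z,0)\ \le\ \Phi_1\!\big(z+(c_1-c_2)nT+\kappa^+(\delta)+\sigma_1\delta,\,0\big)+o(1)
\]
and the analogue for $\Psi$. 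If $c_1>c_2$, the left side tends to $1$, forcing $\Phi_2\equiv 1$ and contradicting $\Phi_2(-\infty,0)=0$; if $c_1<c_2$, the right side tends to $0$, contradicting $\Phi_2(+\infty,0)=1$. Hence the two speeds coincide, $c_1=c_2=:c$, and both $U_1$ and $U_2$ are traveling waves of (\ref{coo sys}) with the common speed $c$.

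With $c_1=c_2=c$, let $\zeta^*=\inf\{\zeta:\,U_1(\cdot+\zeta,0)\ge U_2(\cdot,0)\}$. This set is nonempty---both profiles decay to $\mathbf{o}$ as $z\to-\infty$ and to $\beta$ as $z\to+\infty$ at the \emph{same} exponential rates, a consequence of the strong stability of $\mathbf{o}$ and $\beta$ in (H5), so that a sufficiently large translate of $U_1$ dominates $U_2$ at $t=0$---and it is trivially bounded below, hence $\zeta^*\in\mathbb{R}$. By continuity $U_1(\cdot+\zeta^*,0)\ge U_2(\cdot,0)$, so $U_1(\cdot+\zeta^*,t)\ge U_2(\cdot,t)$ for all $t$ by Lemma~\ref{comparison}. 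I claim $U_1(\cdot+\zeta^*,\cdot)\equiv U_2$. Otherwise, a strong comparison (strong maximum) principle for the cooperative nonlocal $T$-periodic system (\ref{coo sys})---a contact in one component at one point forces, because $\operatorname{supp}J_i$ contains a nondegenerate interval and $f_1,f_2$ are cooperative, equality of both components everywhere and at all times---would make $W:=U_1(\cdot+\zeta^*,\cdot)-U_2$ strictly positive in both components everywhere. Since $W\to 0$ as $z\to\pm\infty$, the rate information from (H5) again shows that either the two waves have identical leading asymptotics at both ends, in which case monotonicity forces $U_1(\cdot+\zeta^*,\cdot)\equiv U_2$, or $W$ dominates the $O(\epsilon)$ deficit created by an infinitesimal further left-translation of $U_1$, giving $U_1(\cdot+\zeta^*-\epsilon,0)\ge U_2(\cdot,0)$ for small $\epsilon>0$ and contradicting the minimality of $\zeta^*$. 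Thus $U_1(\cdot+\zeta^*,0)=U_2(\cdot,0)$; since both are solutions of (\ref{coo sys}) with the same speed $c$ and identical initial data, uniqueness for the Cauchy problem of (\ref{coo sys}) (a consequence of Lemma~\ref{comparison}, or of a Gronwall estimate on the integral equation (\ref{integral solution})) yields $U_1(\cdot+\zeta^*,\cdot)\equiv U_2$, i.e.\ $(\Phi_2,\Psi_2)(z,t)=(\Phi_1,\Psi_1)(z+\zeta^*,t)$ for all $(z,t)$. This is uniqueness up to translation.

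The main obstacle is the last step---ruling out a persistent gap between two ordered bistable waves---which rests on two points needing care in this setting. First, a strong comparison principle for the nonlocal cooperative periodic system must be set up with no smoothing to rely on, and it interacts with the fact that $\partial_z\Phi,\partial_z\Psi$ need not be continuous when $c=0$ (cf.\ the Remark after Theorem~\ref{existence}); one works throughout with the integral formulation (\ref{integral solution}) rather than with pointwise PDE comparisons. Second, the behavior of the profiles near the stable terminal states $\mathbf{o}$ and $\beta$, governed by the strong stability hypothesis (H5), must be made quantitative enough to guarantee both that the translation shifts above are finite and that a ``touching at $\pm\infty$'' reduces to an ordinary interior contact argument. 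The remaining steps---the initial trapping, the propagation, and the identification of the speeds---are comparatively routine once Lemma~\ref{comparison} and Lemma~\ref{upper and lower lemma} are in hand.
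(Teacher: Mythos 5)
Your first half—trapping the second wave between the $\delta$-perturbed upper/lower solutions of Lemma~\ref{upper and lower lemma} built on the first wave, invoking Lemma~\ref{comparison}, and letting $t\to\infty$ to force $c_1=c_2$ and then the two-sided sandwich $\Phi_1(\cdot+\kappa^--\sigma_1\delta,\cdot)\le\Phi_2\le\Phi_1(\cdot+\kappa^++\sigma_1\delta,\cdot)$—is exactly the paper's argument for speed uniqueness, and it is correct as written. The gap is in your final step, where you identify the profiles. You rest it on two ingredients that are neither proved by you nor available in the paper. First, the ``strong comparison principle'' you invoke (a contact in one component at one point forces equality of both components everywhere and at all times) is genuinely delicate here: the cooperative coupling degenerates precisely near the terminal states, since $\partial f_1/\partial\psi=b_1(t)q(t)\phi$ vanishes at $\phi=0$ and $\partial f_2/\partial\phi=a_2(t)p(t)(1-\psi)$ vanishes at $\psi=1$, so propagation of a contact from one component to the other is not automatic, and nothing of this sort is established in the paper (which, moreover, warns that $\partial_z\Phi,\partial_z\Psi$ may fail to be continuous when $c=0$). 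Second, you extract ``the same exponential decay rates at $\pm\infty$'' from (H5); but (H5) is an abstract strong-stability hypothesis on fixed points of the Poincar\'e map acting on the spatially homogeneous space $\chi_\beta$, and it says nothing about the spatial asymptotics of wave profiles—the paper's Section 4 asymptotics are explicitly tentative (``may have a behavior''). Consequently your closing dichotomy (``identical leading asymptotics, or $W$ dominates the $O(\epsilon)$ deficit of a further translation'') is not a proof: both $W$ and the deficit $\approx\epsilon\,\partial_zU_1$ tend to $0$ as $z\to\pm\infty$, and comparing them requires exactly the quantitative tail information you have not supplied.

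The paper closes this step differently and more economically: having the sandwich with equal speeds, it follows the squeezing argument of Chen (1997, Step~2, p.~133), which slides the translate and, instead of a strong maximum principle and profile asymptotics, re-deploys the already constructed upper/lower solutions $\Phi_1(\cdot+\kappa\pm\sigma_1\delta(1-e^{-\rho t}),t)\pm\delta p_i(\cdot,t)e^{-\rho t}$ to absorb the tails near $\mathbf{o}$ and $\beta$ and to strictly decrease the minimal shift whenever the two profiles do not coincide. If you replace your strong-maximum-principle/decay-rate step by this squeezing mechanism (which needs only Lemma~\ref{upper and lower lemma}, Lemma~\ref{comparison}, and the positivity of $\partial_z\Phi_\pm,\partial_z\Psi_\pm$ on compact sets), your sliding scheme becomes the paper's proof; as it stands, the last step has a genuine gap.
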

\begin{proof}
We first prove the uniqueness of the bistable wave speed by contradiction and assume that (\ref{wave sys})-(\ref{1.8}) has
two solutions $(\Phi,\Psi)(x+ct,t)$  and $(\Phi_{1},\Psi_{1})(x+c_{1}t,t)$ with speeds $c$ and $ c_1$.
By Lemma \ref{upper and lower lemma} and the comparison principle, we have
\begin{equation}\label{27}
\begin{array}{lr}
\Phi(x+ct+\kappa^{-}-\sigma_{1}\delta(1-e^{-\rho t}),t)-Ce^{-\rho t}/2 =\phi^-(x,t)\leq\Phi_{1}(x+c_{1}t,t)
\end{array}
\end{equation}
for some  $\kappa^{-}\in \mathbb{R}$ and $C>0$ since (\ref{27}) is true at $t=0$.  It follows from the above formulas that $c\leqslant c_{1}$. Otherwise, assume that $\Phi_{1}(\eta,t)<1$ for some fixed value $\eta$ and $t\in (0,\infty)$. If $c\geqslant c_{1}$, then on the line $x+c_{1}t=\eta$ we have
\begin{equation*}
\Phi_{1}(\eta,t)=\Phi_{1}(x+c_{1}t,t)\geqslant \phi^-(x,t)\geqslant \Phi\left(x+c_{1}t+(c-c_1)t+\kappa^{-}-\sigma_{1}\delta(1-e^{-\rho t}),t\right)-Ce^{-\rho t}/2.
\end{equation*}
 It turns out that $\Phi_{1}(\eta,t)\geqslant 1$ as $t$ is sufficiently large.  This is a contradiction.  By using the same idea we can also prove $c_{}\geq c_{1}$. Therefore, we have $c=c_{1}$.

 Now from (\ref{27}) by letting $t\to \infty$, we get
 \[
 \Phi(\eta+\kappa^{-}-\sigma_1\delta,t )\leq\Phi_{1}(\eta,t).\]

 Similarly we can get

  \[
 \Phi_1(\eta,t)\leq\Phi(\eta+\kappa^{+}+\sigma_1\delta,t).\]
 As such, we can easily follow the idea in \cite{Chen1997} (see Step 2 on  page 133) to prove that the wave profile is unique up to translation.

\end{proof}

\subsection{Stability  }
This subsection is devoted to discussing the Liapunov stability of the bistable $T$-periodic traveling wave solution of system  (\ref{coo sys}).
\begin{theorem} \label{stability}
 Assume that (\ref{1.12a}) holds and there exists   $\Gamma(z,t)=(\Phi(z,t), \Psi(z,t)), z=x+ct$  as the bistable  $T$-periodic traveling wave profile of system (\ref{coo sys}) connecting $\textbf{o}$ and $\beta.$ Suppose  that $\omega(x,t)=(\phi(x,t),\psi(x,t))$ is the solution of system (\ref{coo sys}) with the initial data $\omega_{0}=(\phi_{0},\psi_{0})$ satisfying $(0,0)\le \omega_{0}\le (1,1)$.  Then the traveling wave $\Gamma(z,t)$ is stable in the sense that for arbitrary small $\epsilon>0$ there is a constant $\delta^*$,
such that
\begin{equation}\label{re}
\| \, \omega(x,t)-\Gamma(z,t) \, \| <\epsilon, \  (x,t)\in \mathbb{R}\times \mathbb{R}^+
\end{equation} as long as
$\omega_{0}$  satisfies
\begin{equation}\label{th1.2-cond}
\|\omega_{0}(x)-\Gamma(x,0)\|<\delta^*, \ x\in \mathbb{R}.
\end{equation}
\end{theorem}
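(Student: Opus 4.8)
The plan is to squeeze $\omega(x,t)$ between a pair of upper and lower solutions of the type constructed in Lemma~\ref{upper and lower lemma}, and then to show that the width of this sandwich, measured against the profile $\Gamma(z,t)=(\Phi,\Psi)(z,t)$, can be made less than $\epsilon$ uniformly in $(x,t)$. I will use two features of Lemma~\ref{upper and lower lemma}: its construction stays valid for \emph{every} choice of the phase constants $\kappa^{\pm}\in\mathbb{R}$ (only translation invariance is used) and for every $\delta$ below the threshold $C_1/(2C_3)$, with $\sigma_1,\rho,M$ unchanged; and the interpolating functions $p_i(x,t)=\zeta(x)p_i^{+}(t)+(1-\zeta(x))p_i^{-}(t)$ satisfy $0<m\le p_i(x,t)\le\bar m<\infty$ for constants $m,\bar m$ depending only on the $T$-periodic $p_i^{\pm}$. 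Given $\epsilon>0$, I would fix the constants in this order: since $(\Phi,\Psi)(z,t)\to\mathbf{o}$ as $z\to-\infty$ and $\to\beta$ as $z\to+\infty$ uniformly in $t\in[0,T]$, pick $L>0$ beyond which the profile stays within $\epsilon/8$ of its limits; apply Lemma~\ref{upper and lower lemma} to the given wave to obtain $\sigma_1,\rho,M$; by uniform continuity of $\Gamma$ on the compact window $[-L-1,L+1]\times[0,T]$ fix $h_0>0$ with $|\Gamma(z+h,t)-\Gamma(z,t)|<\epsilon/4$ there whenever $|h|\le h_0$; finally set $\kappa^{\pm}=0$, choose $\delta>0$ so small that $\delta<C_1/(2C_3)$, $\sigma_1\delta<\min\{h_0,1\}$ and $\delta\bar m<\epsilon/4$, and put $\delta^{*}:=\delta\,m$.

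Suppose $\mathbf{o}\le\omega_0\le\beta$ with $\sup_x\|\omega_0(x)-\Gamma(x,0)\|<\delta^{*}$. Since the shift term vanishes at $t=0$, the initial trapping is immediate: $\phi^{+}(x,0)=\Phi(x,0)+\delta p_1(x,0)\ge\Phi(x,0)+\delta m>\phi_0(x)$, and the analogous bounds hold for the other three functions. Set $\widehat\omega^{+}:=\min\{\omega^{+},\beta\}$ and $\widehat\omega^{-}:=\max\{\omega^{-},\mathbf{o}\}$ (componentwise); as $\omega_0\le\beta$ and $\omega_0\ge\mathbf{o}$, these still bracket $\omega_0$ at $t=0$, and they lie in $\mathcal{C}_\beta$. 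Because $\mathbf{o}$ and $\beta$ are constant equilibria (hence both upper and lower solutions), $\widehat\omega^{+}$ is the minimum of two upper solutions and $\widehat\omega^{-}$ the maximum of two lower solutions, so by the Remark following Lemma~\ref{upper and lower lemma} they are legitimate upper/lower solutions; Lemma~\ref{comparison}(ii) then gives for all $(x,t)\in\mathbb{R}\times[0,\infty)$
\[
\widehat\phi^{-}(x,t)\le\phi(x,t;\omega_0)\le\widehat\phi^{+}(x,t),\qquad \widehat\psi^{-}(x,t)\le\psi(x,t;\omega_0)\le\widehat\psi^{+}(x,t).
\]

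It remains to bound $\widehat\phi^{+}-\Phi$, $\Phi-\widehat\phi^{-}$ and the two $\psi$-analogues by $\epsilon$, with $z=x+ct$. Since $\kappa^{\pm}=0$, the shifted variable $\xi^{\pm}=z\pm\sigma_1\delta(1-e^{-\rho t})$ satisfies $|\xi^{\pm}-z|\le\sigma_1\delta$, and using that $\Phi(\cdot,t)$ is nondecreasing and $0\le\Phi\le1$ one gets $0\le\widehat\phi^{+}-\Phi\le[\Phi(z+\sigma_1\delta,t)-\Phi(z,t)]+\delta\bar m$; the bracket is $<\epsilon/4$ for $|z|\le L+1$ by the choice of $h_0$, and $<\epsilon/4$ for $|z|>L+1$ because there $\Phi(z,t)$ and $\Phi(z+\sigma_1\delta,t)$ lie within $\epsilon/8$ of the same endpoint, so that $0\le\widehat\phi^{+}-\Phi<\epsilon/2$. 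A symmetric estimate gives $0\le\Phi-\widehat\phi^{-}<\epsilon/2$, and likewise for $\widehat\psi^{\pm}$; together with the sandwich this yields (\ref{re}).

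I expect this last, uniform-in-$x$ estimate to be the main obstacle. Since the nonlocal semigroup does not regularize, one cannot absorb a spatial translation of the profile into its ($\epsilon$-small) derivative uniformly on all of $\mathbb{R}$; the bound has to be localised, resting only on continuity of $\Gamma$ on a compact interface window and, in the two tails, on the pinning of both the solution and the wave near the \emph{same} stable equilibrium --- which is exactly the effect that the exponentially weighted $T$-periodic eigenfunctions $p_i^{\pm}$ of Lemma~\ref{upper and lower lemma} are built to produce. A secondary point is the bookkeeping with the truncations, needed because $\phi^{+}$ exceeds $1$ and $\phi^{-},\psi^{-}$ may become negative near $z=\pm\infty$; one checks they stay ordered against $\omega_0$ and against the genuine solution, which rests on the invariance of $[0,1]^2$ and the Remark after Lemma~\ref{upper and lower lemma}. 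Finally, as the trapping functions converge in the moving frame to translates of $\Gamma$ that are $O(\delta^{*})$ apart, the same scheme in fact delivers asymptotic stability with an asymptotic phase, though only Liapunov stability is asserted in the statement.
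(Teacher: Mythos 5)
Your proposal is correct and follows essentially the same route as the paper's own proof: trap $\omega(x,t)$ between the upper/lower solutions of Lemma \ref{upper and lower lemma} with $\kappa^{\pm}=0$ and $\delta^{*}=\delta\cdot\inf_{x}\min\{p_{1}(x,0),p_{2}(x,0)\}$, invoke Lemma \ref{comparison}, and then bound $\|\omega-\Gamma\|$ by the $\delta p_i e^{-\rho t}$ term plus the phase shift $\sigma_{1}\delta(1-e^{-\rho t})$, finally shrinking $\delta$ with $\epsilon$. The only deviation is the last estimate: the paper controls the shifted profile $\Phi(z+\sigma_{1}\delta(1-e^{-\rho t}),t)-\Phi(z,t)$ via a mean-value bound with a global constant $\chi$ on $\partial_{z}\Phi$, whereas you use uniform continuity of $\Gamma$ on a compact interface window together with the tail limits, and you make explicit the truncation bookkeeping that the paper leaves to its remark after Lemma \ref{upper and lower lemma} --- harmless variants of the same argument.
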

\begin{proof} Let  $0<\delta\leq \frac{C_1}{2C_3}$  be defined in Lemma \ref{upper and lower lemma} with $\kappa^{\pm}=0$, $\delta_{0}=\inf_{x\in \mathbb{R}}\min \left\lbrace p_{1}(x,0),p_{2}(x,0)\right\rbrace $ and  $\delta^*=\delta \delta_0$.
Then condition (\ref{th1.2-cond}) means that
$$
\Phi(x,0)-\delta p_{1}(x,0)\leqslant \phi_{0} \leqslant \delta p_{1}(x,0)+\Phi(x,0), \ x\in \mathbb{R}
$$
and
$$
\Psi(x,0)-\delta p_{2}(x,0)\leqslant \psi_{0} \leqslant \delta p_{2}(x,0)+\Psi(x,0), \ x\in \mathbb{R}.
$$
By this and (\ref{special upper and lower}), we have
$$
\phi^{-}(x,0) \leqslant \phi_{0}(x)\leqslant \phi^{+}(x,0), \quad \psi^{-}(x,0) \leqslant \psi_0(x)\leqslant \psi^{+}(x,0).
$$
Then lemma \ref{comparison} implies that
\begin{equation}\label{re2}
\phi^{-}(x,t) \leqslant \phi(x,t) \leqslant \phi^{+}(x,t), \quad \psi^{-}(x,t) \leqslant \psi(x,t)\leqslant \psi^{+}(x,t),
\end{equation}
where $(\phi^{\pm}(x,t),\psi^{\pm}(x,t))$ are defined in lemma \ref{upper and lower lemma}. Thus we have
\begin{eqnarray*}
\begin{aligned}
&\vert \, \phi^{\pm}(x,t)-\Phi(z,t) \, \vert\\
 \leq &\vert \,  \delta p_1\left(z+ \sigma_1 \delta(1-e^{-\rho t}),t\right)e^{-\rho t} \, \vert+\vert \, \Phi\left(z+ \sigma_1 \delta(1-e^{-\rho t}),t\right)-\Phi(z,t) \, \vert\\
 \leq & \delta\vert \, p_1\left(z+ \sigma_1 \delta(1-e^{-\rho t}),t\right) \, \vert e^{-\rho t}+\sigma_1\delta\left\vert \, \frac{\partial }{\partial z}\Phi\left(z+ \theta\sigma_1 \delta(1-e^{-\rho t}),t\right) \, \right\vert (1-e^{-\rho t})\\
 \leq & \chi \delta,
\end{aligned}
\end{eqnarray*}
where $\theta\in (0,1)$ and $\chi>0$ does not depend on $\delta$. Likewise,  it is easy to check that
$$
\vert \, \psi^{\pm}(x,t)-\Psi(z,t) \, \vert\leq \chi \delta.
$$
By (\ref{re2}) and taking  $\delta< \min\{\frac{C_1}{2C_3}, \frac{\epsilon}{\sqrt{2}\chi}\}$, i.e., $\delta^*<\delta_0\min\{\frac{C_1}{2C_3}, \frac{\epsilon}{\sqrt{2}\chi}\}$, after a simple computation, we get
  $$
\| \, \omega(x,t)-\Gamma(z,t) \, \| \leq\sqrt{2} \chi\delta<\epsilon.
$$
The proof is complete.
\end{proof}

\section{The value interval of the bistable wave speed}
In this section, we focus on the study of the speed of the  bistable time-periodic traveling wave (if it exists) with a  value interval estimate and its sign determination. We will assume that only (\ref{A2}) (instead of (\ref{1.12a}) ) holds in the following sections.

\begin{theorem} \label{th3.1}   Let $c$ be the speed of the bistable $T$-periodic  traveling wave solution of (\ref{coo sys}), connecting $\mathbf{o}$ to $\beta $. Then we have
\begin{equation}\label{speed1}
		-c^{*}_{+}(0,\alpha_{i})\le c\le c^{*}_{-}(\alpha_{i},\beta),\;\;\;i=1,2.
\end{equation}
Particularly, for $i=1$ we have
\begin{equation}\label{speed2}
\inf_{0<\mu<\infty}\frac{\gamma_1(\mu)}{\mu}\le c\le \inf_{0<\mu<\infty}\frac{\gamma_2(\mu)}{\mu},
\end{equation}
where
$$
\gamma_1(\mu)= \dfrac{1}{T}\int_0 ^T d_{1}(t)\left(\int_R J_{1}(y,t)e^{-\mu y} dy-1\right)+a_{1}(t)p(t)dt
$$
and
$$
\gamma_2(\mu)= \frac{1}{T}\int_0 ^T d_{2}(t)\left(\int_R J_{2}(y,t)e^{\mu y} dy-1\right)+b_{2}(t)q(t) dt.
$$
\end{theorem}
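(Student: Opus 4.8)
The plan is to squeeze the bistable speed $c$ between the spreading speeds of the two monotone subsystems of (\ref{coo sys}) obtained by restricting the dynamics to the order intervals $[\mathbf{o},\alpha_i]$ and $[\alpha_i,\beta]$, using the wave profile itself, suitably truncated, as a sub/super-solution. The starting point is the standard fact that, because (\ref{coo sys}) is cooperative on the phase space $\mathbb{W}$ and each convolution operator is order preserving, the pointwise maximum of a solution and an equilibrium is a sub-solution while the pointwise minimum is a super-solution; hence $\overline{\Gamma}:=\max\{\Gamma,\alpha_i\}$ is a sub-solution of the $[\alpha_i,\beta]$-subsystem and $\underline{\Gamma}:=\min\{\Gamma,\alpha_i\}$ is a super-solution of the $[\mathbf{o},\alpha_i]$-subsystem, both order intervals being invariant since their endpoints are $T$-periodic equilibria. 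For $i=1$ these subsystems collapse to scalar nonlocal monostable $T$-periodic equations: on $[\alpha_1,\beta]$ the $\psi$-component is frozen at $1$ and $\phi_t=d_1(t)(J_1*\phi-\phi)+a_1(t)p(t)\phi(1-\phi)$, while on $[\mathbf{o},\alpha_1]$ the $\phi$-component vanishes and $\psi_t=d_2(t)(J_2*\psi-\psi)-b_2(t)q(t)\psi(1-\psi)$; moreover $\overline{\Gamma}=(\Phi,1)$ and $\underline{\Gamma}=(0,\Psi)$ because $\Phi\ge0$ and $\Psi\le1$, and substituting into (\ref{wave sys}) confirms directly that $\Phi(x+ct,t)$ is a sub-solution of the first scalar equation and $\Psi(x+ct,t)$ a super-solution of the second.

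Next I would run the classical speed-comparison argument. Let $v(x,t)$ solve the scalar monostable equation on $[\alpha_1,\beta]$ with $v(\cdot,0)=\Phi(\cdot,0)$; Lemma \ref{comparison} (applied in the invariant subsystem, where $\psi\equiv1$) gives $v(x,t)\ge\Phi(x+ct,t)$. Since $\phi=0$ (the state $\alpha_1$) is unstable and $\phi=1$ (the state $\beta$) is stable, $\beta$ invades $\alpha_1$ leftward at speed $c^*_-(\alpha_1,\beta)$; if $c>c^*_-(\alpha_1,\beta)$, pick $c'\in(c^*_-(\alpha_1,\beta),c)$: the spreading property of the associated Poincar\'e map forces $v(-c't,t)\to0$ as $t\to\infty$, whereas $v(-c't,t)\ge\Phi((c-c')t,t)\to1$, a contradiction, so $c\le c^*_-(\alpha_1,\beta)$. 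Symmetrically, letting $w(x,t)$ solve the $\psi$-equation on $[\mathbf{o},\alpha_1]$ with $w(\cdot,0)=\Psi(\cdot,0)$ gives $w(x,t)\le\Psi(x+ct,t)$; here $\psi=0$ (the state $\mathbf{o}$) is stable and $\psi=1$ (the state $\alpha_1$) is unstable, so $\mathbf{o}$ invades $\alpha_1$ rightward at speed $c^*_+(0,\alpha_1)$, and if $c<-c^*_+(0,\alpha_1)$, picking $c''\in(c^*_+(0,\alpha_1),-c)$ gives $w(c''t,t)\to1$ against $w(c''t,t)\le\Psi((c''+c)t,t)\to0$, hence $c\ge-c^*_+(0,\alpha_1)$. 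This is (\ref{speed1}) for $i=1$, and the explicit bounds (\ref{speed2}) follow by inserting into (\ref{speed1}) the representations $c^*_-(\alpha_1,\beta)=\inf_{\mu>0}\gamma_1(\mu)/\mu$ and $c^*_+(0,\alpha_1)=\inf_{\mu>0}\gamma_2(\mu)/\mu$ already established in (\ref{C-})--(\ref{C+}).

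For $i=2$ I would repeat the scheme with $\alpha_2=(\hat\phi(t),\hat\psi(t))$; the one structural difference is that $\alpha_2$ is strictly positive in both components, so the monotonicity of $\Gamma$ in $z$ makes $\max\{\Gamma(z,t),\alpha_2(t)\}\equiv\alpha_2(t)$ for $z$ below a finite threshold and $\min\{\Gamma(z,t),\alpha_2(t)\}\equiv\alpha_2(t)$ for $z$ above one, so the front-like initial data for the two (now genuinely two-component) cooperative monostable subsystems $[\mathbf{o},\alpha_2]$ and $[\alpha_2,\beta]$ already agree with the unstable end state on a half-line. No closed-form speed is available in this case, but the existence and variational characterization of $c^*_{\pm}$ from \cite{Liang07,Fang,Bao}, together with the comparison argument above, deliver (\ref{speed1}) for $i=2$ as well.

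The step I expect to be the main obstacle is the spreading assertion $v(-c't,t)\to0$ in the $i=1$ case: the datum $\Phi(\cdot,0)$ agrees with the unstable end state $0$ only in the limit $x\to-\infty$ (as $\Phi>0$ everywhere), not on a half-line, so one must control its exponential decay to ensure the scalar monostable semiflow propagates at the linearly predicted minimal speed rather than faster. I would handle this by linearizing (\ref{coo sys}) at $\mathbf{o}$, where the $\phi$-equation decouples, to obtain $\Phi(z,t)\le C(t)e^{\lambda_+ z}$ as $z\to-\infty$, with $\lambda_+>0$ the unique positive root of $c\lambda=\gamma_1(\lambda)-\overline{b_1q}$, and then to observe, using the convexity of $\gamma_1$ and $\overline{b_1q}>0$, that the contradiction hypothesis $c>c^*_-(\alpha_1,\beta)$ forces $\lambda_+$ to exceed the exponent at which $\gamma_1(\mu)/\mu$ attains its infimum, which is precisely the decay regime in which the spreading speed of the scalar equation equals $c^*_-(\alpha_1,\beta)$; the mirror estimate for $1-\Psi$ near $+\infty$ settles the lower inequality. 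A secondary bookkeeping point, since this section assumes only (\textbf{A2}) and not $J_k(x)=J_k(-x)$, is to keep the leftward and rightward spreading speeds paired with the correct one-sided transforms $\int_{\mathbb{R}}J_k(y)e^{\mp\mu y}\,dy$, exactly as in the derivation of (\ref{C-})--(\ref{C+}).
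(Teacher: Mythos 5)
Your route is genuinely different from the paper's. The paper never analyzes the tails of the wave: it takes nondecreasing front-like initial data (identically $0$ for $x<-L$, identically $1-\tau$ for $x>L$) chosen inside the $\delta^*$-neighborhood of $\Gamma(\cdot,0)$ demanded by Theorem \ref{stability}, so that the corresponding solution stays $\epsilon$-close to the bistable wave for all time, and then compares this solution from above (Lemma \ref{comparison}) with the $T$-periodic \emph{monostable traveling wave} of the $[\alpha_1,\beta]$ subsystem, which travels with speed $c^{*}_{-}(\alpha_1,\beta)$ by the monostable theory; evaluating along the ray $x+c^{*}_{-}(\alpha_1,\beta)t=\xi$ gives the contradiction, and (\ref{speed2}) then follows from the computation already done in Theorem \ref{existence}. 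So the paper's ingredients are Lyapunov stability plus existence of monostable waves, whereas yours are the wave profile used as a sub/super-solution of the frozen scalar subsystems plus linear exponential supersolutions controlling how fast those subsystems can spread. Much of your machinery is sound; in particular the convexity step is correct: with $G(\mu)=\gamma_1(\mu)-\overline{b_1q}-c\mu$, condition (\ref{A2}) gives $G(0)<0$, $G$ is convex with unique positive zero $\mu_1(c)$, and if $c>c^{*}_{-}(\alpha_1,\beta)$ then $G(\mu^*)=(c^{*}_{-}-c)\mu^*-\overline{b_1q}<0$ at the minimizer $\mu^*$ of $\gamma_1(\mu)/\mu$, hence $\mu^*<\mu_1(c)$, which is exactly the regime where the scalar equation spreads no faster than $c^{*}_{-}$.

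The genuine gap is the tail estimate itself. You assert $\Phi(z,t)\le C(t)e^{\mu_1(c)z}$ (and the mirror bound for $1-\Psi$ near $+\infty$) "by linearizing", but linearization only identifies the candidate rate; it does not yield a pointwise inequality for the actual wave, and nothing in the paper supplies one — Section 4 only records formal asymptotics ("may have a behavior"), and it is not cited as proved decay of the wave constructed in Theorem \ref{existence}. This is not a technicality in your scheme: if $\Phi(\cdot,0)$ decayed only at some rate $\mu<\mu^*$, the KPP subsystem started from it genuinely spreads leftward at speed $\gamma_1(\mu)/\mu>c^{*}_{-}(\alpha_1,\beta)$, and the contradiction with $v\ge\Phi(x+ct,t)$ evaporates; so without a proved bound at rate at least $\mu^*$ the argument does not close. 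To repair it you would need an honest tail lemma, e.g. for $\nu\le\mu^*$ (where $I_1(\nu,c)<0$) build a supersolution of the form $\min\{1,A\eta(t)e^{\nu z}\}$ for the $\Phi$-equation on a left half-line, absorbing the coupling term $b_1(t)q(t)\Phi\Psi$ by first showing $\Psi$ is uniformly small there, all uniformly in $t\in[0,T]$ — a real piece of work. Alternatively, adopt the paper's device: the stability theorem lets you run the comparison with truncated data that coincides with the relevant equilibrium on a half-line, so no decay information about the wave is ever needed; your own $i=2$ discussion works precisely because the truncation by $\alpha_2$ produces such data automatically.
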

\begin{proof}
We shall prove $c\le c^*_{-}(\alpha_1,\beta)$. The remainder of  (\ref{speed1}) can be proved by means of a similar method.

Let $(\Phi _1,\Psi _1)(z,t)$ be the $T$-periodic monostable traveling wave profile of (\ref{wave sys}) satisfying
\[
(\Phi _1,\Psi _1)(-\infty,t)=\alpha_1,\;\;(\Phi _1,\Psi _1)(\infty,t)=\beta
\;
\]
with the wave speed $c_{-}^{*}(\alpha _1,\beta)$. Then, $(\Phi_1,\Psi _1)(x+c_{-}^{*}(\alpha _1,\beta)t,t)$ is an exact solution of (\ref{coo sys}) with the initial data as $(\Phi _1,\Psi _1)(x,0)$. To proceed, we give another initial functions $(\phi,\psi)(x,0)$ of (\ref{coo sys}), which is continuous, nondecreasing and satisfies
\begin{equation}\label{initial_data1}
\phi(x,0)=\psi(x,0)=
\begin{cases}
~0,~~~x<-L,\\
1-\tau,~x>L,
\end{cases}
\end{equation}
for some $L>0$ and $\tau\in(0,1)$ such that (\ref{th1.2-cond}) in Theorem \ref{stability} holds. It is possible (by shift if necessary) to assume that
\[
\Phi _1(x,0)\geqslant \phi (x,0)~~\text{and}\ ~~\psi (x,0)\geqslant \psi (x,0), \ x\in\mathbb{ R}.
\]
By applying the comparison principle, we then obtain
\begin{equation}\label{eq6}
(\Phi _1,\Psi _1)(x+c_{-}^{*}(\alpha _1,\beta )t,t)\geqslant (\phi,\psi)(x,t), \  (x, t)\in \mathbb{R} \times \mathbb{R}^+.
\end{equation}
By Theorem\ref{stability}, we know that $(\phi,\psi)(x,t)$ is sufficiently close to the  $T$-periodic bistable traveling wave profile $\Gamma(z,t)=(\Phi,\Psi)(z,t)$ with $z=x+ct$. Then it follows that
\begin{equation}\label{eq66}
\Phi _1(x+c_{-}^{*}(\alpha _1,\beta )t,t)\geq \phi(x,t)\geq\Phi(x+ct,t)-\epsilon,
\end{equation}
for any  $\epsilon>0$. Let $\Phi_1(\xi, t)<1$ for the fixed points $(\xi,t), t>0$. By (\ref{eq66}), on the line $\xi=x+c_{-}^{*}(\alpha _1,\beta )t$, if $c>c_{-}^{*}(\alpha _1,\beta )$, then we have
$$
\Phi_1(\xi,t)\geq \Phi(\xi+(c-c_{-}^{*}(\alpha _1,\beta )t),t)-\epsilon\rightarrow 1-\epsilon  \   \text{as} \  \ t\rightarrow \infty.
$$
This is a contradiction since $\epsilon$ is arbitrary small. Thus $c\le c_{-}^{*}(\alpha _1,\beta )$.

The inequality in (\ref{speed2}) is a straightforward consequence of (\ref{speed1}) together with the proof process of Theorem \ref{existence}. Then the proof is complete.
\end{proof}

We now  establish the relationship between the bistable wave speed  and the wave speeds of upper/lower solutions of (\ref{wave sys}). The proofs of the following two theorems can be proceeded in the same way as in Theorem \ref{th3.1} and are omitted here.
\begin{theorem}\label{th3.2}
Assume that (\ref{wave sys}) has a nonnegative upper solution $(\overline{\Phi}(z,t),\overline{\Psi}(z,t))$ with speed $\bar{c},$, nondecreasing in $z$, $T$-periodic in $t$, and satisfying
\[
(\overline{\Phi},\overline{\Psi})(-\infty,t)<(1,1),\qquad (\overline{\Phi},\overline{\Psi})(\infty,t)\geqslant(1,1).
\]
Then the speed $c$ of the  bistable $T$-periodic traveling wave of (\ref{wave sys}) satisfies
\begin{equation}\label{upper_bound_of_c}
c\le \bar{c}.
\end{equation}
In particular, if  $\bar{c}<0$, then the bistable wave speed $c$ is negative.
\end{theorem}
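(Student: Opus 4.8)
The plan is to run the argument of Theorem~\ref{th3.1} essentially verbatim, with the prescribed upper solution $(\overline{\Phi},\overline{\Psi})$ taking over the role played there by the monostable wave $(\Phi_1,\Psi_1)$. First I would observe that, by the convention preceding Definition~\ref{upper_solution_def1}, the function $(\overline{\phi},\overline{\psi})(x,t):=(\overline{\Phi}(x+\bar c t,t),\overline{\Psi}(x+\bar c t,t))$ is an upper solution of the cooperative system (\ref{coo sys}), nondecreasing in $x$ and $T$-periodic in $t$. If it leaves the monotone phase space $\mathbb{W}$, I would replace it by $(\min\{1,\overline{\Phi}\},\min\{1,\overline{\Psi}\})$; since $\beta=(1,1)$ is an equilibrium and $\psi\le 1$ there, this truncation is still an upper solution — exactly the device used in the remark following Lemma~\ref{upper and lower lemma} — so I may assume $(\overline{\Phi},\overline{\Psi})$ is $\chi_\beta$-valued, with $(\overline{\Phi},\overline{\Psi})(\infty,t)=(1,1)$ and $(\overline{\Phi},\overline{\Psi})(-\infty,t)<(1,1)$ preserved. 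Translation invariance of (\ref{coo sys}) then makes $(\overline{\phi},\overline{\psi})(x+s,t)$ an upper solution for every $s\in\mathbb{R}$.

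Next, fix $\xi\in\mathbb{R}$ with $\overline{\Phi}(\xi,0)<1$ (possible since $\overline{\Phi}(-\infty,0)<1$) and put $\eta_0:=1-\overline{\Phi}(\xi,0)>0$. Take $\epsilon:=\eta_0/2$ and let $\delta^*=\delta^*(\epsilon)$ be the constant from Theorem~\ref{stability}. Choose a continuous, nondecreasing initial datum $(\phi_0,\psi_0)$ equal to $\mathbf{o}$ for $x<-L$ and to a constant in $(0,1)^2$ for $x>L$, with $\|(\phi_0,\psi_0)-\Gamma(\cdot,0)\|<\delta^*$; such a datum exists because $\Gamma(\cdot,0)$ is itself nondecreasing with limits $\mathbf{o}$ and $\beta$. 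Since the value of $(\phi_0,\psi_0)$ at $+\infty$ is strictly below $(1,1)=(\overline{\Phi},\overline{\Psi})(\infty,t)$ and $(\phi_0,\psi_0)\equiv\mathbf{o}$ near $-\infty$, monotonicity of $\overline{\Phi},\overline{\Psi}$ in their first argument shows that a sufficiently large shift $s$ gives $(\phi_0,\psi_0)(x)\le(\overline{\phi},\overline{\psi})(x+s,0)$ for all $x$, together with $\mathbf{o}\le(\phi_0,\psi_0)\le\beta$. The comparison principle (Lemma~\ref{comparison}) and Theorem~\ref{stability}, applied to the resulting solution $(\phi,\psi)(x,t)$ of (\ref{coo sys}), then yield
\begin{equation*}
\Phi(x+ct,t)-\epsilon\le\phi(x,t)\le\overline{\Phi}(x+\bar c t+s,t),\qquad (x,t)\in\mathbb{R}\times\mathbb{R}^+.
\end{equation*}

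Finally I argue by contradiction. Suppose $c>\bar c$. Evaluating the chain above along the line $x+\bar c t+s=\xi$ at the times $t=nT$, $n\in\mathbb{N}$, and using the $T$-periodicity in $t$ of both $\overline{\Phi}$ and $\Phi$, I obtain
\begin{equation*}
1-\eta_0=\overline{\Phi}(\xi,0)=\overline{\Phi}(\xi,nT)\ \ge\ \Phi\!\left(\xi-s+(c-\bar c)nT,\,0\right)-\epsilon .
\end{equation*}
Because $c>\bar c$, the argument $\xi-s+(c-\bar c)nT\to+\infty$ as $n\to\infty$, so by the asymptotic condition $\Phi(+\infty,0)=1$ the right-hand side tends to $1-\epsilon=1-\eta_0/2$; hence $1-\eta_0\ge 1-\eta_0/2$, which is absurd. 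Therefore $c\le\bar c$, and in particular $\bar c<0$ forces $c<0$. The only delicate point I anticipate is the one in the first paragraph: one must make sure the truncated pair really is an admissible upper solution for Lemma~\ref{comparison} (it stays in $\chi_\beta$ and the truncation does not break the differential inequalities), and it is precisely the hypothesis $(\overline{\Phi},\overline{\Psi})(\infty,t)\ge(1,1)$ that is then needed, since it is what lets the (truncated) upper solution dominate the step-like initial datum near $+\infty$. Everything else is a routine transcription of the proof of Theorem~\ref{th3.1}.
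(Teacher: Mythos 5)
Your proposal is correct and is essentially the paper's intended argument: the paper omits the proof of Theorem \ref{th3.2}, stating only that it "can be proceeded in the same way as in Theorem \ref{th3.1}", and your write-up is precisely that transcription (truncation into the monotone phase space as in the remark after Lemma \ref{upper and lower lemma}, comparison with a step-like initial datum chosen within $\delta^*$ of $\Gamma(\cdot,0)$, stability from Theorem \ref{stability}, and the contradiction along the line $x+\bar c t+s=\xi$ using $T$-periodicity). The only cosmetic caveat is that if the strict inequality at $-\infty$ holds only in the $\Psi$-component, one runs the same contradiction with $\overline{\Psi}$ and $\Psi$ instead of $\overline{\Phi}$ and $\Phi$.
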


\begin{theorem}\label{th3.3}
 Suppose that (\ref{wave sys}) has a nonnegative lower solution $(\underline{\Phi}(z,t),\underline{\Psi}(z,t))$ with  speed $\underline{c}$, nondecreasing in $z$, $T$-periodic in $t$ and satisfying
\[
(\underline{\Phi},\underline{\Psi})(-\infty,t)=(0,0)< (\underline{\Phi},\underline{\Psi})(\infty,t)\le(1,1).
\]
Then the speed $c$ of the bistable $T$-periodic traveling wave of (\ref{wave sys}) satisfies
\begin{equation}\label{lower_bound_of_c}
		c\geqslant\underline{c}.
\end{equation}
In particular, if  $\bar{c}>0$, then the bistable wave speed $c$ is positive.
\end{theorem}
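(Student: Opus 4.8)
Proof proposal for Theorem~\ref{th3.3}.

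The plan is to mirror the comparison-plus-stability argument already used for Theorem~\ref{th3.1}, with the lower solution $(\underline\Phi,\underline\Psi)$ now playing the role that the monostable profile $(\Phi_1,\Psi_1)$ played there, but with all the inequalities reversed. Since $(\underline\Phi(z,t),\underline\Psi(z,t))$ is a $T$-periodic lower solution of~(\ref{wave sys}) with speed $\underline c$ and~(\ref{wave sys}) is translation invariant in $z$, the shifted pair $(\underline\Phi(x+\underline c t+s,t),\underline\Psi(x+\underline c t+s,t))$ is, for every $s\in\mathbb R$, a $T$-periodic lower solution of~(\ref{coo sys}); moreover the constant pair $(1,1)$ is a stationary, hence an upper, solution of~(\ref{coo sys}). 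I argue by contradiction, assuming $c<\underline c$.

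Because $(\underline\Phi,\underline\Psi)(\infty,t)>(0,0)$, one of the two limit functions is not identically zero; say $\underline\Phi(\infty,t_*)>0$ for some $t_*\in[0,T)$ (otherwise exchange the two components throughout). Fix $\epsilon_0$ with $0<\epsilon_0<\tfrac12\underline\Phi(\infty,t_*)$, let $\delta^*=\delta^*(\epsilon_0)$ be the stability radius of Theorem~\ref{stability}, and pick $0<\eta_0<\delta^*/\sqrt2$. I would then take the initial datum
\[
\omega_0(x)=(\phi_0,\psi_0)(x)=\bigl(\min\{\Phi(x,0)+\eta_0,\,1\},\ \min\{\Psi(x,0)+\eta_0,\,1\}\bigr),
\]
which is continuous, nondecreasing, lies in $\mathcal C_\beta$, satisfies $\|\omega_0-\Gamma(\cdot,0)\|\le\sqrt2\,\eta_0<\delta^*$, and is bounded below by $\eta_0$ in each component. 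The point of this particular choice is that the required ordering $(\underline\Phi,\underline\Psi)(\cdot+s,0)\le\omega_0$ can be achieved by a single shift $s$: for $x$ large, $\phi_0(x)=\psi_0(x)=1$ and the inequality is automatic since $\underline\Phi,\underline\Psi\le(1,1)$; on the remaining (bounded-above) set one uses $\underline\Phi(y,0),\underline\Psi(y,0)\to0$ as $y\to-\infty$ together with monotonicity, choosing $s<0$ so negative that $\underline\Phi(R+s,0)<\eta_0$ and $\underline\Psi(R+s,0)<\eta_0$, where $R$ is large enough that $\phi_0\equiv\psi_0\equiv1$ on $[R,\infty)$.

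With this $\omega_0$, Lemma~\ref{comparison}(ii)---applied with the lower solution $(\underline\Phi(x+\underline c t+s,t),\underline\Psi(x+\underline c t+s,t))$ and the upper solution $(1,1)$---gives $\underline\Phi(x+\underline c t+s,t)\le\phi(x,t;\omega_0)$ for all $(x,t)\in\mathbb R\times\mathbb R^+$, while Theorem~\ref{stability} gives $\phi(x,t;\omega_0)\le\Phi(x+ct,t)+\epsilon_0$. Reading both inequalities along the line $x+\underline c t=\eta$ and using $T$-periodicity in $t$, evaluation at $t=t_*+nT$ yields, for every $\eta\in\mathbb R$,
\[
\underline\Phi(\eta+s,t_*)\ \le\ \Phi\bigl(\eta-(\underline c-c)(t_*+nT),\,t_*\bigr)+\epsilon_0\ \longrightarrow\ \Phi(-\infty,t_*)+\epsilon_0=\epsilon_0\quad(n\to\infty),
\]
because $\underline c-c>0$. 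Letting $\eta\to+\infty$ then forces $\underline\Phi(\infty,t_*)\le\epsilon_0<\tfrac12\underline\Phi(\infty,t_*)$, a contradiction. Hence $c\ge\underline c$; in particular $\underline c>0$ implies $c>0$, which is the stated corollary.

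The remaining verifications---that $\omega_0$ satisfies the hypotheses of Theorem~\ref{stability} and of Lemma~\ref{comparison}, and that a $T$-periodic lower solution of~(\ref{coo sys}) may legitimately be compared on all of $\mathbb R\times[0,\infty)$---go exactly as in the proofs already given, and the obvious modification using the second component covers the case where only $\underline\Psi(\infty,\cdot)$ is nontrivial. The one genuinely new point, and the step I expect to be the main obstacle, is securing the initial ordering $\omega_0\ge(\underline\Phi,\underline\Psi)(\cdot+s,0)$ while keeping $\omega_0$ inside the stability radius of $\Gamma(\cdot,0)$: in Theorem~\ref{th3.1} this was painless because the Heaviside-type datum there stayed strictly below $\beta=(1,1)$, whereas here the lower solution may itself approach $(1,1)$ as $z\to\infty$, so one cannot cap $\omega_0$ from above and must instead lift it off $(0,0)$ by $\eta_0$ and translate the lower solution far to the right.
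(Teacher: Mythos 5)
Your proof is correct and follows essentially the route the paper intends: the paper omits the proof of Theorem \ref{th3.3}, saying it proceeds as in Theorem \ref{th3.1}, and your argument is exactly that mirrored comparison-plus-stability contradiction read along the lines $x+\underline{c}\,t=\eta$ with $T$-periodicity. Your replacement of the capped Heaviside-type initial datum used in Theorem \ref{th3.1} by $\min\{\Gamma(\cdot,0)+\eta_0,(1,1)\}$ is the right adaptation needed here, since the initial data must lie above a shift of the lower solution (hence be lifted off $(0,0)$) while remaining within the stability radius of $\Gamma(\cdot,0)$.
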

Therefore, based on these two theorems, we could find explicit conditions for determining the sign of the bistable wave speed by seeking the formulas of upper/lower solutions of (\ref{wave sys})

\section{Result on the propagation direction}
In this section, we shall give some explicit  criteria to determine the sign of the bistable wave speed. To this end,
we first  discuss the  characteric equation of the bistable traveling wave  of system (\ref{wave sys})-(\ref{1.8}) near the equilibrium points $\textbf{o}$ and $\beta,$ which will be used to precisely construct upper solutions and lower solutions of (\ref{wave sys}).
\subsection{Eigenvalue  problem near $\textbf{o}$ and $\beta$}
Linearizing system (\ref{wave sys}) at $\textbf{o}$ yields
\begin{equation}\label{eigen_zero}
\begin{cases}
d_{1}(t)\left(\int_\mathbb{R }J_{1}(y)\Phi(z-y,t) dy -\Phi\right)-{c}{\Phi}_{z}-\Phi_{t}+{\Phi}[a_{1}(t)p(t)-b_{1}(t)q(t)]=0,\\
d_{2}(t)\left(\int_\mathbb{R} J_{2}(y)\Psi(z-y,t) dy -\Psi\right)-{c}{\Psi}_z-\Psi_{t}+a_{2}(t)p(t){\Phi}-b_{2}(t)q(t){\Psi}=0.\\
\end{cases}
\end{equation}
Let $(\Phi(z,t),\Psi(z,t))=(\varphi_{1}(t)e^{\mu z},\nu_{1}(t)e^{\mu z})$  solve (\ref{eigen_zero}). Then this leads to an eigenvalue problem
\begin{equation}\label{Separation of variables}
\begin{cases}
\varphi_{1}'(t)=\left[d_{1}(t)\left(\int_\mathbb{R} J_{1}(y)e^{-\mu y} dy -1\right)-{c}\mu+a_{1}(t)p(t)-b_{1}(t)q(t)\right]\varphi_{1}(t),\\
\nu_{1}'(t)=\left[d_{2}(t)\left(\int_\mathbb{R} J_{2}(y)e^{-\mu y} dy -1\right)-{c}\mu-b_{2}(t)q(t)\right]\nu_{1}(t)+a_{2}(t)p(t)\varphi_{1}(t),\\
\varphi_1(t)=\varphi_1(t+T),~\nu_{1}(t)=\nu_{1}(t+T).
\end{cases}
\end{equation}
By the first equation in (\ref{Separation of variables}), we have
\begin{equation}\label{rr}
I_1(\mu,c):=\int_0 ^T \left\{d_{1}(t)\left(\int_\mathbb{R} J_{1}(y)e^{-\mu y} dy -1\right)-{c}\mu+a_{1}(t)p(t)-b_{1}(t)q(t)\right\} dt=0.
\end{equation}
It is easy to check that $\frac{\partial^2 I_1}{\partial{\mu^2}}(\mu,c)\geq 0$ and $I_1(0,c)<0$, where (\ref{A2}) is used. Then $I_1(\mu,c)=0$ has only one positive root denoted by $\mu_1$ or $\mu_1(c)$.
If we let $\nu_{1}(t)=\rho_{1}(t)\varphi_{1}(t)$, then the second equation in (\ref{Separation of variables}) is changed into
\begin{equation}\label{tejie1}
\rho'_{1}(t)-\left( g_{1}(\mu_{}, t)-\dfrac{\varphi_{1}'(t)}{\varphi_{1}(t)}\right)\rho_{1}(t)=a_{2}(t)p(t),
\end{equation}
where
\begin{equation}\label{g1}
g_{1}(\mu,t)=d_{2}(t)\left(\int_\mathbb{R} J_{2}(y)e^{-\mu y} dy -1\right)-{c}\mu-b_{2}(t)q(t).
\end{equation}
Hence the linearized system has a solution such that
\begin{equation}\label{ap1}
(\Phi, \Psi) (z,t)\sim (\varphi_{1}(t), \  \rho_{1}(t)\varphi_{1}(t))e^{\mu_{1}z}  \  \text{as} \ z \rightarrow -\infty.
\end{equation}

However, let  $(\Phi(z,t),\Psi(z,t))=\left(\varphi_{1}(t)e^{\mu_1 z},\nu_{1}(t)e^{\mu_{2} z}+\rho_{1}(t)\varphi_{1}(t)e^{\mu_{1}z}\right)$ solve (\ref{eigen_zero}), where $\mu_2$ or $\mu_2(c)$ is another positive eigenvalue of (\ref{Separation of variables}) with $\mu_1\neq\mu_2$. Then we have that $\rho_{1}(t)$ still satisfies (\ref{tejie1}) and $\mu_2$ solves the following equation
\begin{equation}\label{b1}
h_1(\mu, c):=\int_0 ^T \Big\{d_{2}(t)\left(\int_\mathbb{R} J_{2}(y)e^{-\mu y} dy -1\right)-{c}\mu-b_{2}(t)q(t)\Big\} dt=0.
\end{equation}
Thus we have a solution with
\begin{equation}\label{ap2}
 (\Phi, \Psi) (z,t)\sim \left(\varphi_{1}(t)e^{\mu_1 z}, \ \nu_{1}(t)e^{\mu_{2} z}+\rho_{1}(t)\varphi_{1}(t)e^{\mu_{1}z}\right) \  \text{as} \ z \rightarrow -\infty.
 \end{equation}

Next linearizing  system (\ref{wave sys}) around the equilibrium $\beta$, we have
\begin{equation}\label{eigen_one}
\begin{cases}
d_{1}(t)\left(\int_\mathbb{R} J_{1}(y)\Phi(z-y,t) dy -\Phi\right)-{c}{\Phi}_{z}-\Phi_{t}-a_{1}(t)p(t){\Phi}+b_{1}(t)q(t)\Psi=0,\\
d_{2}(t)\left(\int_\mathbb{R} J_{2}(y)\Psi(z-y,t) dy -\Psi\right)-{c}{\Psi}_z-\Psi_{t}+[b_{2}(t)q(t)-a_{2}(t)p(t)]{\Psi}=0.\\
\end{cases}
\end{equation}
Let  (\ref{eigen_one}) have solutions in the form of $(\Phi(z,t),\Psi(z,t))=(\varphi_{2}(t)e^{-\mu z},\nu_{2}(t)e^{-\mu z})$. Then the eigenvalue problem associated with it is
\begin{equation}\label{Separation of variables2}
\begin{cases}
\nu_{2}'(t)=\left[d_{2}(t)\left(\int_\mathbb{R} J_{2}(y)e^{\mu y} dy -1\right)+{c}\mu+b_{2}(t)q(t)-a_{2}(t)p(t)\right]\nu_{2}(t),\\
\varphi_{2}'(t)=\left[d_{1}(t)\left(\int_\mathbb{R} J_{1}(y)e^{\mu y} dy -1\right)+{c}\mu-a_{1}(t)p(t)\right]\varphi_{2}(t)+b_{1}(t)q(t)\nu_{2}(t),\\
\varphi_2(t)=\varphi_2(t+T),~\nu_{2}(t)=\nu_{2}(t+T).
\end{cases}
\end{equation}
Integrating the first equation over the interval $[0,T]$ produces
\begin{equation}\label{ee}
I_{2}(\mu,c):=\int_0 ^T\left\{ d_{2}(t)\left(\int_\mathbb{R} J_{2}(y)e^{\mu y} dy -1\right)+{c}\mu+b_{2}(t)q(t)-a_{2}(t)p(t) \right\}dt=0.
\end{equation}
By a simple computation, we have $\frac{\partial^2I_2}{\partial \mu^2}(\mu,c)\geq0$. Furthermore,  (\ref{A2}) implies $I_{2}(0,c)<0$. Hence (\ref{ee}) has only one positive root denoted by $\mu_{4}$ or $\mu_{4}(c)$. If assume  $\varphi_{2}(t)=\rho_{2}(t)\nu_2(t)$, then the second equation in (\ref{Separation of variables2}) yields
\begin{equation}\label{tejie2}
\rho'_{2}(t)-\left( g_{2}(\mu_{},t)-\dfrac{\nu_{2}'(t)}{\nu_{2}(t)}\right)\rho_{2}(t)= b_{1}(t)q(t),
\end{equation}
where
\begin{equation}\label{g2}
g_{2}(\mu,t)=d_{1}(t)\left(\int_\mathbb{R} J_{1}(y)e^{\mu y} dy -1\right)+{c}\mu-a_{1}(t)p(t).
\end{equation}
This means that the solution of   system (\ref{wave sys}) may have a behavior
\begin{equation}\label{ap3}
(\Phi, \Psi)(z,t)\sim (1,1)+( \rho_{2}(t)\nu_2(t), \  \nu_{2}(t))e^{-\mu_{4}z} \  \text{as} \ z\rightarrow\infty.
\end{equation}

 However, we suppose that  $(\Phi(z,t), \Psi(z,t))=\left(\varphi_{2}(t)e^{-\mu_{3} z}+\rho_{2}(t)\nu_{2}(t)e^{-\mu_{4}z}, \nu_{2}(t)e^{-\mu_{4} z}\right)$ is a solution of  (\ref{Separation of variables2}), where $\mu_3$ or $\mu_3(c)$ is another positive eigenvalue of (\ref{Separation of variables2}) and $\mu_3\neq\mu_4$. Then it is easy to verify that $\rho_{2}$ still solves (\ref{tejie2}) and $\mu_3$ satisfies
\begin{equation}\label{h2}
h_2(\mu,c):=\int_0 ^T \left\{d_{1}(t)\left(\int_\mathbb{R} J_{1}(y)e^{\mu y} dy -1\right)+{c}\mu-a_{1}(t)p(t)\right\}dt=0.
\end{equation}
Then the asymptotical behavior of the bistable traveling wave to system (\ref{wave sys}) near $\beta$ may become
\begin{equation}\label{ap4}
(\Phi, \Psi)(z,t)\sim \left(1+\varphi_{2}(t)e^{-\mu_{3} z}+\rho_{2}(t)\nu_{2}(t)e^{-\mu_{4}z}, \  1+\nu_{2}(t)e^{-\mu_{4} z}\right) \  \text{as} \  z\rightarrow\infty.
\end{equation}
 The positivity of   $\varphi_1(t)$ and $\nu_2(t)$  is easy to verify, which  shall be used to construct upper/lower solutions  of (\ref{wave sys}).
%

\subsection{Explicit condition for propagation direction}
According to the results in the last subsection and Theorems \ref{th3.2} and \ref{th3.3}, under condition (\ref{A2}),
we shall derive explicit conditions for determining the sign of the bistable wave speed by constructing upper/lower solutions. By (\ref{def}), it is well known that the bistable traveling wave with positive (negative) speed propagates to the left (right). We begin with introducing two functions
\begin{equation}\label{y1}
Y_{1}(\mu_1(c),t)=d_{1}(t)\left(\int_{\mathbb{R}}J_{1}(y)e^{\mu_{1}(c)y}dy-1\right)-d_{2}(t)\left(\int_{\mathbb{R}}J_{2}(y)e^{\mu_{1}(c)y}dy-1\right)
\end{equation}
and
\begin{equation}\label{y2}
Y_{2}(\mu_1(c),t)=-d_{1}(t)\int_{0} ^{\infty}  J_{1}(y)(1+e^{\mu_{1}(c)y})(e^{\frac{1}{2}\mu_{1}(c)y}-e^{-\frac{1}{2}\mu_{1}(c)y})^2 dy.
\end{equation}
\begin{theorem}\label{TH1}
Assume that the T-periodic coefficients $d_i(t),  a_{i}(t), b_{i}(t) (i=1,2)$ satisfy
\begin{equation}\label{TH1-cond1}
0<\dfrac{Y_{1}(\mu_1(0),t)+a_{1}(t)p(t)-b_{1}(t)q(t)+b_{2}(t)q(t)}{a_{2}(t)p(t)}<\dfrac{a_{ 1}(t)p(t)+Y_{2}(\mu_1(0),t)}{a_{1}(t)p(t)}, \  t\in [0,T],
\end{equation}
where $ p(t), q(t)$ are defined in (\ref{p(t)}) and $\mu_1(0)$ solves
\begin{equation}\label{rr1}
I_1(\mu_1(0),0):=\int_0 ^T \left\{d_{1}(t)\left(\int_\mathbb{R} J_{1}(y)e^{-\mu_1(0) y} dy -1\right)+a_{1}(t)p(t)-b_{1}(t)q(t)\right\} dt=0.
\end{equation}
Then the bistable wave speed of system (\ref{wave sys}) is positive.
\end{theorem}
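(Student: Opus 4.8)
The plan is to invoke Theorem \ref{th3.3}: it suffices to exhibit, for some constant $\underline c>0$, a nonnegative lower solution $(\underline\Phi(z,t),\underline\Psi(z,t))$ of (\ref{wave sys}) in the sense of Definition \ref{upper_solution_def1}, with speed $\underline c$, nondecreasing in $z$, $T$-periodic in $t$, and satisfying $(\underline\Phi,\underline\Psi)(-\infty,t)=(0,0)<(\underline\Phi,\underline\Psi)(\infty,t)\le(1,1)$; Theorem \ref{th3.3} then yields $c\ge\underline c>0$. The hypothesis (\ref{TH1-cond1}) is stated at $c=0$ on purpose: the eigenvalue $\mu_1(c)$ (the unique positive root of $I_1(\mu,c)=0$ from (\ref{rr})), the functions $Y_1(\mu_1(c),t)$ and $Y_2(\mu_1(c),t)$ in (\ref{y1})--(\ref{y2}), and the $T$-periodic coefficients all depend continuously on $c$, so the two strict inequalities in (\ref{TH1-cond1}) persist, uniformly in $t\in[0,T]$, for all sufficiently small $\underline c>0$; it is for such a $\underline c$ that the lower solution is built, so $\mu_1(\underline c)$ is its decay rate at $-\infty$.

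For the ansatz I would follow the eigenvalue analysis of the preceding subsection. Near $z\to-\infty$ the linearization of (\ref{wave sys}) at $\mathbf o$ carries the solution $(\varphi_1(t),\rho_1(t)\varphi_1(t))e^{\mu_1(\underline c)z}$ of (\ref{ap1}), with $\varphi_1(t)>0$ and $\rho_1(t)$ as in (\ref{ap1})--(\ref{tejie1}); near $z\to+\infty$ the linearization at $\beta$ decays like $(\rho_2(t)\nu_2(t),\nu_2(t))e^{-\mu_4 z}$, cf.\ (\ref{ap4}). Accordingly I would take $(\underline\Phi,\underline\Psi)$ to coincide, up to a small negative correction of order $e^{2\mu_1(\underline c)z}$, with $(\varphi_1(t),\rho_1(t)\varphi_1(t))e^{\mu_1(\underline c)z}$ for $z$ to the left of a cutoff $z_0$, and to be capped for $z>z_0$ at a constant state below $\beta$ but close enough to it that the reaction terms $f_1,f_2$ there have the stabilizing sign guaranteed by the stability of $\beta$ (which holds already under (\ref{A2}), $\beta$ corresponding to the stable semitrivial state $(p(t),0)$). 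The second-order correction is essential: the principal exponential solves only the \emph{linearized} equation, and it is this correction that converts the linearized identity into a strict sub-solution inequality. Any piece leaving the monotone region $\mathbb W=\{\phi\ge0,\ \psi\le1\}$ is replaced by its truncation $\max\{0,\underline\Phi\}$, $\min\{1,\underline\Psi\}$, as in the Remark after Lemma \ref{upper and lower lemma}, so that Lemma \ref{comparison} still applies.

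The verification of Definition \ref{upper_solution_def1} then splits into three $z$-ranges. For $z$ large (the cap) the nonlocal term $J_i\ast(\cdot)-(\cdot)$ at a constant is $\le0$ and the reaction dominates with the correct sign because $\beta$ is stable. For $z$ very negative (cutoff inactive) one substitutes the ansatz, uses $I_1(\mu_1(\underline c),\underline c)=0$ to cancel the $O(e^{\mu_1(\underline c)z})$ part of the $\underline\Phi$-inequality and (\ref{tejie1}) for the $\underline\Psi$-inequality, and collects the $O(e^{2\mu_1(\underline c)z})$ terms; after dividing by $a_2(t)p(t)e^{2\mu_1(\underline c)z}$ the requirement becomes \emph{precisely} the two-sided inequality (\ref{TH1-cond1}) with $\mu_1(0)$ replaced by $\mu_1(\underline c)$. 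Here $Y_1(\mu_1(\underline c),t)$ encodes the discrepancy between the dispersal operators $d_1[J_1\ast(\cdot)-(\cdot)]$ and $d_2[J_2\ast(\cdot)-(\cdot)]$ acting on the common exponential scale $e^{\mu_1(\underline c)z}$, while $-Y_2(\mu_1(\underline c),t)\ge0$ is the concavity (Jensen-type) deficit of the nonlocal operator produced by the $e^{2\mu_1(\underline c)z}$ correction, namely $d_1(t)\int_0^\infty J_1(y)(1+e^{\mu_1(\underline c)y})(e^{\frac12\mu_1(\underline c)y}-e^{-\frac12\mu_1(\underline c)y})^2\,dy$; the continuity argument of the first paragraph makes (\ref{TH1-cond1}) at $c=0$ imply the reduced inequality for small $\underline c>0$. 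On the intermediate zone one shifts $f_1,f_2$ by large constants $\beta_1,\beta_2$ as in (\ref{1.12abc}) to make the reaction monotone and controls the nonlocal terms on this bounded zone by estimates of the type in Lemma \ref{upper and lower lemma}; enlarging $z_0$ absorbs the resulting $O(1)$ errors into the exponentially small reserve.

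The hard part is this intermediate matching zone: since $J_i\ast(\cdot)$ does not commute with capping, one cannot simply splice an exponential tail onto a constant, and the $e^{2\mu_1 z}$ correction (with its nonpositive contribution $Y_2$) is the device that secures a genuine sub-solution inequality on the exponential part, while one must still check that the glued profile stays nondecreasing in $z$, $T$-periodic in $t$, and inside $\mathbb W$ after truncation. Once a lower solution with speed $\underline c>0$ is obtained, Theorem \ref{th3.3} gives $c\ge\underline c>0$, which is the assertion.
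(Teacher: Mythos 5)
Your overall strategy -- build a nondecreasing, $T$-periodic lower solution with small positive speed $\underline c$ and invoke Theorem \ref{th3.3}, using continuity of $\mu_1(c)$, $Y_1$, $Y_2$ in $c$ to transfer the hypothesis from $c=0$ to small $\underline c>0$ -- is the paper's framework. But your construction is not the paper's, and as described it has genuine gaps. The paper uses the single smooth global profile (\ref{lower01}), $\underline\Phi(z,t)=k_1\varphi_1(t)/(\varphi_1(t)+e^{-\mu_1(\underline c)z})$ and $\underline\Psi=\underline\Phi/k_1$, where the constant $k_1$ is exactly what condition (\ref{TH1-cond1}) furnishes (see (\ref{TH1-cond2})); both sub-solution inequalities are then checked for \emph{all} $z$ at once via the elementary bounds (\ref{SS}) on $S$, and $Y_1$, $Y_2$ arise from those bounds -- no gluing, no cap, no $e^{2\mu_1 z}$ correction. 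Your ansatz instead pins the ratio of the two components at $-\infty$ to $\rho_1(t)$ from (\ref{tejie1}), thereby discarding the one free parameter (the constant ratio $\underline\Psi/\underline\Phi=1/k_1$) that (\ref{TH1-cond1}) is designed to supply. With the ratio forced to $\rho_1(t)$ and a correction at rate $2\mu_1$, the order-$e^{2\mu_1(\underline c)z}$ balance involves $\rho_1(t)$, the dispersal integrals evaluated at $2\mu_1(\underline c)$, and your unspecified correction coefficients; you give no computation showing this collapses to (\ref{TH1-cond1}), and the claim that it does so ``precisely'' is unsupported -- in the paper $Y_1$ and $Y_2$ come from the sigmoid and the $S$-estimates, not from a second-order expansion of the linearized tail.

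The right-hand piece and the matching zone also do not work as stated. A lower solution must satisfy $d_1(t)[J_1*\underline\Phi-\underline\Phi]-\underline c\,\underline\Phi_z-\underline\Phi_t+f_1(\underline\Phi,\underline\Psi,t)\ge 0$ (and the analogue for $\underline\Psi$) pointwise. At a constant cap strictly below $\beta$, say $(1-\epsilon_1,1-\epsilon_2)$, one has $f_1=(1-\epsilon_1)[a_1(t)p(t)\epsilon_1-b_1(t)q(t)\epsilon_2]$ and $f_2=\epsilon_2[a_2(t)p(t)(1-\epsilon_1)-b_2(t)q(t)(1-\epsilon_2)]$, whose signs are \emph{not} guaranteed pointwise in $t$ by ``stability of $\beta$'': under (\ref{A2}) that stability is only an averaged statement (the paper sidesteps this by letting $\underline\Psi\to1$, so that $f_2\equiv0$ and $f_1\to k_1a_1p(1-k_1)>0$ at $+\infty$). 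Moreover, near the gluing point the nonlocal term $J_i*\underline\Phi-\underline\Phi$ is negative by an $O(1)$ amount, since the profile drops to exponentially small values on the left; this cannot be ``absorbed into an exponentially small reserve'' by enlarging $z_0$, and shifting $f_i$ by $\beta_i$ as in (\ref{1.12abc}) does not help at all -- adding $\beta_i\phi$ to both sides leaves the inequality unchanged (that device serves monotone comparison, not sub-solution verification). As written, then, the lower solution is not established; to repair the argument you would either have to carry out the second-order matching honestly and see what condition it actually yields, or adopt the paper's global sigmoidal ansatz with the free constant $k_1$.
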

\begin{proof}
By (\ref{TH1-cond1}) and $Y_{2}(\mu_1(0), t)<0$, there exists a real number $k_{1}$ satisfying
\begin{equation}\label{TH1-cond2}
\dfrac{Y_{1}(\mu_1(0),t)+a_{1}(t)p(t)-b_{1}(t)q(t)+b_{2}(t)q(t)}{a_{2}(t)p(t)}<k_{1}<\dfrac{a_{1}(t)p(t)+Y_{2}(\mu_1(0),t)}{a_{1}(t)p(t)}<1.
\end{equation}
Define a pair of functions $(\underline{\Phi},\underline{\Psi})$ by
\begin{equation}\label{lower01}
	\underline{\Phi}(z,t) = \dfrac{k_1 \varphi_{1}(t)}{\varphi_{1}(t)+e^{-\mu_{1}(\underline{c})z}},~~
\underline{\Psi}(z,t)=\dfrac{1}{k_{1}}\underline{\Phi}
\end{equation}
with speed $ 0<\underline{c}\ll 1$. If $(\underline{\Phi},\underline{\Psi})$ can be proved to be a lower solution of (\ref{wave sys}), then Theorem \ref{th3.3} implies the desired result.

Indeed, substituting $(\underline{\Phi},\underline{\Psi})$ into (\ref{wave sys}), from the first equation it follows that
\begin{equation*}
\begin{split}
&d_{1}(t)\left(\int_\mathbb{R} J_{1}(y)\underline{\Phi}(z-y,t) dy -\underline{\Phi}\right)-\underline{c}\underline{\Phi}_{z}-\underline{\Phi}_{t}+\underline{\Phi}\left[a_{1}(t)p(t)(1-\underline{\Phi})
-b_{1}(t)q(t)(1-\underline{\Psi})\right] \\
&=\underline{\Phi}(1-\dfrac{\underline{\Phi}}{k_{1}})
\bigg\{\dfrac{d_{1}(t)\left(\int_R J_{1}(y)\underline{\Phi}(z-y,t) dy -\underline{\Phi}\right)}{\underline{\Phi}(1-\dfrac{\underline{\Phi}}{k_{1}})}-\underline{c}\mu_{1}(\underline{c})-\dfrac{\varphi_{1}'(t)}{\varphi_{1}(t)} +H_{1}(z,t)
\bigg\}\\
&\stackrel{def}{=}\Lambda_1, \\
\end{split}
\end{equation*}
where
\[
H_{1}(z,t)=\dfrac{a_{1}(t)p(t)(1-\underline{\Phi})-b_{1}(t)q(t)(1-\dfrac{\underline{\Phi}}{k_{1}})}{1-\dfrac{\underline{\Phi}}{k_1}}.
\]
Now applying  the first equation in (\ref{Separation of variables}), we have
\begin{equation*}
\begin{split}
\Lambda_1&=\dfrac{\underline{\Phi}^{2}}{k_{1}}(1-\dfrac{\underline{\Phi}}{k_{1}})
\bigg\{\dfrac{d_{1}(t)\int_\mathbb{R} J_{1}(y)\left(\underline{\Phi}(z-y,t) -\underline{\Phi}-(e^{-\mu_{1}(\underline{c})y}-1)\underline{\Phi}(1-\dfrac{\underline{\Phi}}{k_{1}})\right)dy}{\dfrac{\underline{\Phi}^{2}}{k_{1}}(1-\dfrac{\underline{\Phi}}{k_{1}})}\\
&~~~~~~~~~~~~~~~~~~~+\dfrac{a_{1}(t)p(t)(1-k_{1})}{1-\dfrac{\underline{\Phi}}{k_{1}}}
\bigg\} \\
&=\dfrac{\underline{\Phi}^{2}}{k_{1}}(1-\dfrac{\underline{\Phi}}{k_{1}})
\bigg\{ d_{1}(t)\int_\mathbb{R} J_{1}(y)S(\mu_{1}(\underline{c}),z,y,t)(2-e^{\mu_{1}(\underline{c}) y}-e^{-\mu_{1}(\underline{c}) y})dy+\dfrac{a_{1}(t)p(t)(1-k_{1})}{1-\dfrac{\underline{\Phi}}{k_{1}}} \bigg\}, \\
\end{split}
\end{equation*}
where $ S(\mu_{1}(\underline{c}),z,y,t)=\frac{\varphi_{1}(t)e^{\mu_{1}(\underline{c})z}+1}{\varphi_{1}(t)e^{\mu_{1}(\underline{c})z}+e^{\mu_{1}y}}$.  It is easy to verify
\begin{equation}\label{SS}
\left\{
\begin{array}{lr}
e^{-\mu_1(\underline{c}) y} <S(\mu_{1}(\underline{c}),z,y,t)< 1 \   \text{for} \  y\geq0, \\[2mm]
1< S(\mu_{1}(\underline{c}),z,y,t)< e^{-\mu_1(\underline{c}) y} \  \text{for} \  y\leq0.
\end{array}
\text{$(z,t) \in \mathbb{R} \times \mathbb{R}_+$}
\right.
\end{equation}
By this,  the first equation in (\ref{Separation of variables}) and (\ref{TH1-cond2}), we have
\begin{equation}\label{bb1}
\begin{split}
\Lambda_1&\geqslant\dfrac{\underline{\Phi}^{2}}{k_{1}}(1-\dfrac{\underline{\Phi}}{k_{1}})\left[Y_2(\mu_1(\underline{c}),t)+a_{1}(t)p(t)-a_{1}(t)p(t)k_1\right]\\
&\rightarrow \dfrac{\underline{\Phi}^{2}}{k_{1}}(1-\dfrac{\underline{\Phi}}{k_{1}})\left[Y_2(\mu_1(0),t)+a_{1}(t)p(t)-a_{1}(t)p(t)k_1\right]>0 \ \text{as} \ \underline{c} \rightarrow 0^+.
\end{split}
\end{equation}
For the second equation in (\ref{wave sys}), by (\ref{SS}) and  (\ref{TH1-cond2}), we have
\begin{equation}\label{bb2}
\begin{split}
&d_{2}(t)\left(\int_\mathbb{R} J_{2}(y)\underline{\Psi}(z-y,t) dy -\underline{\Psi}\right)-{\underline{c}}\underline{\Psi}_{z}-\underline{\Psi}_{t}+(1-\underline{\Psi})\left[a_{2}(t)p(t)\underline{\Phi}-b_{2}(t)q(t)\underline{\Psi}\right]\\
=&\dfrac{\underline{\Phi}}{k_{1}}(1-\dfrac{\underline{\Phi}}{k_{1}}) \bigg\{d_{2}(t)\int_\mathbb{R} J_{2}(y)S(\mu_{1}(\underline{c}),z,y,t)(1-e^{\mu_{1}(\underline{c})y})dy-\underline{c}\mu_{1}(\underline{c})-\dfrac{\varphi'_{1}(t)}{\varphi_{1}(t)}\\
&~~~~~~~~~~~~~~~~+a_{2}(t)p(t)k_{1}-b_{2}(t)q(t) \bigg\}\\
>&\dfrac{\underline{\Phi}}{k_{1}}(1-\dfrac{\underline{\Phi}}{k_{1}}) \left\{\int_\mathbb{R} J_{2}(y)(1-e^{\mu_{1}(\underline{c})y})dy-\underline{c}\mu_{1}(\underline{c})-\dfrac{\varphi'_{1}(t)}{\varphi_{1}(t)}+a_{2}(t)p(t)k_{1}-b_{2}(t)q(t) \right\}\\
=& \dfrac{\underline{\Phi}}{k_{1}}(1-\dfrac{\underline{\Phi}}{k_{1}}) \bigg\{\int_\mathbb{R} J_{2}(y)(1-e^{\mu_{1}(\underline{c})y})dy-d_{1}(t)\left(\int_\mathbb{R } J_{1}(y)e^{-\mu_{1}(\underline{c})y}dy-1\right)\\
&~~~~~~~~~~~~~~~~-a_{1}(t)p(t)+b_{1}(t)q(t)-b_{2}(t)q(t)+a_{2}(t)p(t)k_{1} \bigg\}\\
=&\dfrac{\underline{\Phi}}{k_{1}}(1-\dfrac{\underline{\Phi}}{k_{1}}) \bigg\{a_{2}(t)p(t)k_{1}-[Y_1(\mu_1(\underline{c}),t)+a_{1}(t)p(t)-b_{1}(t)q(t)+b_{2}(t)q(t)]\bigg\}\\
\rightarrow& \dfrac{\underline{\Phi}}{k_{1}}(1-\dfrac{\underline{\Phi}}{k_{1}}) \bigg\{a_{2}(t)p(t)k_{1}-[Y_1(\mu_1(0),t)+a_{1}(t)p(t)-b_{1}(t)q(t)+b_{2}(t)q(t)]\bigg\}>0   \ \text{as} \ \underline{c} \rightarrow 0^+.
\end{split}
\end{equation}
Thus, by (\ref{bb1}) and (\ref{bb2}),  as $\underline{c}$ is sufficiently close to $0$,  $(\underline{\Phi},\underline{\Psi})$ is a lower solution of (\ref{wave sys}), and the proof is complete.
\end{proof}

In order to obtain conditions for the negative wave speed, we next construct an explicit upper solution which possesses two piecewise continuous components.
\begin{theorem}\label{TH2}
		Let
		\begin{equation}\label{con1}
			Y_{3}(\mu_1(c),t)=-d_{1}(t)\int_{-\infty} ^{0} J_{1}(y)(1+e^{\mu_{1}(c)y})(e^{\frac{1}{2}\mu_{1}(c)y}-e^{-\frac{1}{2}\mu_{1}(c)y})^2 dy.
		\end{equation}
and
        \begin{equation}\label{F}
            F\left(\mu_{1}(c),s_{0},t\right)=d_{2}(t)\int_\mathbb{R} J_{2}(y)\left(2+\dfrac{(1-s_{0})(1-e^{\mu_{1}(c)y})}{s_{0}+(1-s_{0})e^{\mu_{1}(c)y}}\right)(1-e^{\mu_{1}(c)y})dy,
        \end{equation}
		where $s_{0}\in(0,1)$ is a constant.
 Suppose that there exists $s_0$ such that the T-periodic coefficients $ d_i(t)$, $a_{i}(t)$, $ b_{i}(t)$ $(i=1,2)$ satisfy
		\begin{equation}\label{TH2-cond1}
            \max\left\{\dfrac{a_{2}(t)p(t)}{b_{2}(t)q(t)},1\right\}
            <\min\left\{\dfrac{1-\frac{a_{1}(t)p(t)}{b_{1}(t)q(t)}(1-s_{0})}{s_{0}},\dfrac{-Y_{3}(\mu_1(0),t)}{b_{1}(t)q(t)}\right\} , \  t\in [0,T],
        \end{equation}
and
        \begin{equation}\label{TH2-cond2}
		      F\left(\mu_{1}(0),s_{0},t\right)< Y_1(\mu_1(0), t)+a_1(t)p(t)-b_1(t)q(t), \  t\in [0,T],
        \end{equation}
		where  $ p(t), q(t)$, $Y_1$ and $\mu_1(0)$  are defined in (\ref{p(t)}), (\ref{y1}) and (\ref{rr}), respectively.  Then the bistable wave speed  of system (\ref{wave sys}) is negative.
	\end{theorem}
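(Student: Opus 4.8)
The plan is to apply Theorem \ref{th3.2}: it suffices to produce a nonnegative pair $(\overline{\Phi}(z,t),\overline{\Psi}(z,t))$, nondecreasing in $z$ and $T$-periodic in $t$, that is an upper solution of (\ref{wave sys}) with some negative speed $\overline{c}$ and satisfies $(\overline{\Phi},\overline{\Psi})(-\infty,t)<(1,1)$ and $(\overline{\Phi},\overline{\Psi})(\infty,t)\geq (1,1)$. Since any negative $\overline{c}$ is enough, I would let $\overline{c}=\overline{c}(s_0)\to 0^{-}$, so that the positive root $\mu_1(\overline{c})$ of the characteristic equation (\ref{rr}) tends to $\mu_1(0)$; then all the limiting inequalities will be governed precisely by the $\overline{c}=0$ quantities $Y_1(\mu_1(0),t)$, $Y_3(\mu_1(0),t)$ and $F(\mu_1(0),s_0,t)$ occurring in (\ref{TH2-cond1})--(\ref{TH2-cond2}).

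First I would fix the free constant: the first inequality in (\ref{TH2-cond1}) guarantees a constant $k>1$, independent of $t$, with
\begin{equation*}
\max\Big\{\tfrac{a_2(t)p(t)}{b_2(t)q(t)},\,1\Big\}<k<\min\Big\{\tfrac{1-\frac{a_1(t)p(t)}{b_1(t)q(t)}(1-s_0)}{s_0},\ \tfrac{-Y_3(\mu_1(0),t)}{b_1(t)q(t)}\Big\},\qquad t\in[0,T].
\end{equation*}
This $k$ is the analogue of $k_1$ in the proof of Theorem \ref{TH1}, but now larger than $1$ so that the profile may overshoot $\beta$. Next, following the logistic ansatz of Theorem \ref{TH1} but \emph{truncated} at the equilibrium level, I would take, with $\mu:=\mu_1(\overline{c})$ and $\varphi_1(t)$ the positive $T$-periodic solution of the first equation in (\ref{Separation of variables}),
\begin{equation*}
g(z,t)=\frac{\varphi_1(t)}{s_0\varphi_1(t)+(1-s_0)e^{-\mu z}},\qquad \overline{\Phi}(z,t)=\min\{1,\,g(z,t)\},\qquad \overline{\Psi}(z,t)=\min\{1,\,k\,g(z,t)\},
\end{equation*}
so that each component is piecewise continuous (a logistic-type piece on the left, the constant value $1$ on the right, glued $C^0$ at a possibly $t$-dependent junction $z=z_*(t)$), is nondecreasing in $z$ and $T$-periodic in $t$, and $(\overline{\Phi},\overline{\Psi})(-\infty,t)=(0,0)$, $(\overline{\Phi},\overline{\Psi})(\infty,t)=(1,1)$.

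The verification would proceed on the two regions. Where $g\geq 1$ (resp.\ $kg\geq 1$) the corresponding component equals the equilibrium value $1$, so the inequality holds trivially because $\beta=(1,1)$ solves (\ref{wave sys}); moreover, since (\ref{wave sys}) is cooperative, the componentwise minimum of the upper solution $\beta$ and the pair $(g,kg)$ is again an upper solution, so it remains only to check the inequalities on the set where $g<1$. There I would substitute $(g,kg)$ into (\ref{wave sys}), factor out $g(1-g/k)$ exactly as in (\ref{bb1})--(\ref{bb2}), and use the first equation in (\ref{Separation of variables}) to cancel $\varphi_1'(t)/\varphi_1(t)$ and the linear terms. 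The convolutions $J_i\ast\overline{\Phi}$, $J_i\ast\overline{\Psi}$ still pick up the capped value $1$ on the part of the $y$-integration where the shifted argument lies to the right of $z_*(t)$; splitting the integral accordingly and using sign/monotonicity estimates of the type (\ref{SS}) would collapse the $\Phi$-inequality, in the limit $\overline{c}\to 0^{-}$, to (\ref{TH2-cond1}) and the $\Psi$-inequality to (\ref{TH2-cond2}). As both are assumed with strict slack, $(\overline{\Phi},\overline{\Psi})$ is then a genuine upper solution for $\overline{c}$ close enough to $0^{-}$, and Theorem \ref{th3.2} yields $c\leq\overline{c}<0$.

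The main obstacle will be the convolution bookkeeping near the junction: the profile is only piecewise $C^1$ there, and its convolution mixes the logistic piece with the flat piece, so one must organize the $y$-integral into the subregions where $z-y$ falls on either side of $z_*(t)$ — and it is exactly this organization that produces $Y_3(\mu_1(0),t)$ (see (\ref{con1})) out of the $\Phi$-equation and $F(\mu_1(0),s_0,t)$ (see (\ref{F})) out of the $\Psi$-equation in the limit. This in turn requires the limit $\overline{c}\to 0^{-}$ to be \emph{uniform} in $(z,t)$ over a noncompact set; that uniformity, together with the passage across the corner, rests on the strict slack in (\ref{TH2-cond1})--(\ref{TH2-cond2}), on the monotonicity in $z$ of the $S$-type ratios as in (\ref{SS}), and on the standard fact that a componentwise minimum of upper solutions of a cooperative system is an upper solution.
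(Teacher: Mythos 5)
Your overall strategy coincides with the paper's (a piecewise, nondecreasing, $T$-periodic upper solution built from the $\mu_1(\overline{c})$-eigenpair of (\ref{Separation of variables}), Theorem \ref{th3.2}, and the limit $\overline{c}\to 0^-$), but the profile you choose is not the paper's, and the decisive step — that your regionwise inequalities ``collapse to (\ref{TH2-cond1}) and (\ref{TH2-cond2})'' — is asserted rather than computed, and it fails in the left tail. In the paper's construction $\overline{\Phi}$ equals the \emph{constant} $s_0$ for $z<z_1(t)$ and the unscaled logistic $\varphi_1/(\varphi_1+e^{-\mu_1(\overline{c})z})$ for $z\ge z_1(t)$, with $\overline{\Psi}=k_4\overline{\Phi}$ capped at $1$; thus $\overline{\Phi}\ge s_0$ everywhere, and on $z<z_1(t)$ the verification is purely algebraic, reducing exactly to $a_1p(1-s_0)<b_1q(1-k_4s_0)$ and $a_2p<b_2qk_4$, i.e.\ to the two quantities in (\ref{TH2-cond1}); the terms $Y_3$ and $F$ come from the middle and right regions, $F$ being $d_2\int J_2(S+1)(1-e^{\mu_1 y})dy$ evaluated at the junction $z_1(t)$ where $\varphi_1e^{\mu_1 z_1}=s_0/(1-s_0)$ (this is how $s_0$ enters (\ref{F})). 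Your ansatz $g=\varphi_1/(s_0\varphi_1+(1-s_0)e^{-\mu z})$, $\overline{\Phi}=\min\{1,g\}$, $\overline{\Psi}=\min\{1,kg\}$ instead has $\overline{\Phi}\to 0$ and $\overline{\Psi}=k\overline{\Phi}$ with $k>1$ as $z\to-\infty$, so a genuinely new small-amplitude regime appears that the paper's profile deliberately avoids.

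In that regime the linear terms cancel through (\ref{Separation of variables}), and with $h=s_0g$ the $\Phi$-inequality reduces to
\begin{equation*}
\frac{h^2}{s_0}\Big\{(1-h)D+a_1(t)p(t)\Big(1-\tfrac{1}{s_0}\Big)+b_1(t)q(t)\Big(\tfrac{k}{s_0}-1\Big)\Big\}\le 0,
\qquad D=d_1(t)\int_{\mathbb{R}}J_1(y)S\,(2-e^{\mu y}-e^{-\mu y})\,dy\le 0,
\end{equation*}
so you need the nonlocal correction $(1-h)|D|$ to dominate $\frac{1}{s_0}\bigl[b_1q(k-s_0)-a_1p(1-s_0)\bigr]$, a quantity which is strictly positive under (\ref{TH2-cond1}) (it exceeds $\frac{1}{s_0}b_1q(k-1)(1+s_0)$ because $k>1$). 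Nothing in (\ref{TH2-cond1})--(\ref{TH2-cond2}) supplies this domination: the only comparable bound they give, $b_1qk<-Y_3(\mu_1(0),t)$, involves a half-line integral and is short by the factor $s_0$, so for small $s_0$ your profile is simply not an upper solution under the stated hypotheses. The min-of-upper-solutions shortcut does not repair this: it would require $(g,kg)$ to be an upper solution globally, including where $kg>1$, where the system loses its cooperative structure (see the paper's remark on the phase space $\mathbb{W}$ after Lemma \ref{upper and lower lemma}), so one must verify regionwise in any case — and then the left-tail obstruction above remains. The fix is exactly the paper's: truncate from below at the level $s_0$ (constant left piece glued to the unscaled logistic at the point where it equals $s_0$), which eliminates the small-amplitude regime and makes the left-region inequalities precisely the algebraic ones encoded in (\ref{TH2-cond1}).
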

	\begin{proof}
		By condition (\ref{TH2-cond1}), we can take a constant $k_{4}$ such that
		\begin{equation}\label{TH2-cond2a}
			\max\left\{\dfrac{a_{2}(t)p(t)}{b_{2}(t)q(t)},1\right\}<k_{4}<\min\left\{\dfrac{1-\frac{a_{1}(t)p(t)}{b_{1}(t)q(t)}(1-s_{0})}{s_{0}},\dfrac{-Y_{3}(\mu_1(0),t)}{b_{1}(t)q(t)}\right\}, \  t\in [0,T].
		\end{equation}
			Now let a pair of nondecreasing functions be defined by
		\begin{equation*}
			\overline{\Phi} (z,t) =\left\{
				\begin{array}{lr}
					s_{0}, & z< z_{1}(t),\\
					\dfrac{\varphi_{1}(t)}{\varphi_{1}(t)+e^{-\mu_{1}(\bar{c})z}}, &z\geq z_{1}(t)
				\end{array}
			\right. ,~~~~~~~~
			\overline{\Psi}(z,t)=\left\{
			\begin{array}{lr}
				k_{4}\overline{\Phi}, & z\le z_{2}(t),\\
				1, &z>z_{2}(t)
			\end{array}
			\right.
		\end{equation*}
		with $-1\ll\overline{c}<0$, $z_{1}(t)<z_{2}(t)$ such that $ \overline{\Phi}(z_1(t),t)=s_{0} $, $k_{4}\overline{\Phi}(z_2(t),t)=1$. For  $z>z_{1}(t) $, it is easy to check that
		\begin{equation}\label{shiji-1}
			\overline{\Phi}_{t}=\dfrac{\varphi_{1}'(t)}{\varphi_{1}(t)}\overline{\Phi}(1-\overline{\Phi}),
~~~\overline{\Phi}_{z}=\mu_{1}(\overline{c})\overline{\Phi}(1-\overline{\Phi}),   \   z>z_{1}(t).
		\end{equation}
		
		If we prove that $(\overline{\Phi},\overline{\Psi})$ is an upper solution of system (\ref{wave sys}), then we can get the desired result. To this end, we substitute $(\overline{\Phi},\overline{\Psi})$ into (\ref{wave sys}).
		When $z< z_1(t)$, by using the first equation of (\ref{Separation of variables}) and (\ref{TH2-cond2a}), the first equation in (\ref{wave sys}) becomes
		\begin{eqnarray*}
			\begin{aligned}
				&d_{1}(t)\left(\int_\mathbb{R} J_{1}(y)\overline{\Phi}(z-y,t) dy -\overline{\Phi}\right)-\overline{c}\overline{\Phi}_{z}-\overline{\Phi}_{t}+\overline{\Phi}\left[a_{1}(t)p(t)(1-\overline{\Phi})
				-b_{1}(t)q(t)(1-\overline{\Psi})\right] \\
				=&s_{0}\left[a_{1}(t)p(t)(1-s_{0})
				-b_{1}(t)q(t)(1-k_4s_{0})\right]<0,
			\end{aligned}
		\end{eqnarray*}
and the second equation in (\ref{wave sys}) becomes
		\begin{equation*}
			\begin{aligned}
				&d_{2}(t)\left(\int_\mathbb{R} J_{2}(y)\overline{\Psi}(z-y,t) dy -\overline{\Psi}\right)-\overline{c}\overline{\Psi}_{z}-\overline{\Psi}_{t}+(1-\overline{\Psi})\left[a_{2}(t)p(t)\overline{\Phi}-b_{2}(t)q(t)\overline{\Psi}\,\right]\\
				=&(1-k_{4}s_{0})s_{0}\left[a_{2}(t)p(t)-b_{2}(t)q(t)k_{4}\right]<0
			\end{aligned}
		\end{equation*}
		When $z_{1}(t)\leqslant z\leqslant z_2(t),$   the first equation in (\ref{wave sys}) becomes
		\begin{eqnarray*}
			\begin{aligned}
				&d_{1}(t)\left(\int_\mathbb{R} J_{1}(y)\overline{\Phi}(z-y,t) dy -\overline{\Phi}\right)-\overline{c}\overline{\Phi}_{z}-\overline{\Phi}_{t}+\overline{\Phi}\left[a_{1}(t)p(t)(1-\overline{\Phi})
				-b_{1}(t)q(t)(1-\overline{\Psi})\right] \\
				=&\overline{\Phi}^{2}(1-\overline{\Phi})
				\bigg\{\dfrac{d_{1}(t)\left(\int_\mathbb{R} J_{1}(y)\overline{\Phi}(z-y,t) dy -\overline{\Phi}\right)}{\overline{\Phi}^{2}(1-\overline{\Phi})}-\dfrac{d_{1}(t)\left(\int_\mathbb{R }J_{1}(y)e^{-\mu_{1}(\overline{c})y}dy-1\right)}{\overline{\Phi}}\\
				&~~~~~~~~~~~~~~~~~+\dfrac{k_{4}b_{1}(t)q(t)-b_{1}(t)q(t)}{1-\overline{\Phi}}\bigg\}\\
				=&\overline{\Phi}^{2}(1-\overline{\Phi})
				\bigg\{ d_{1}(t)\int_\mathbb{R }J_{1}(y)S(\mu_{1}(\overline{c}),z,y,t)(2-e^{-\mu_{1}(\overline{c}) y}-e^{\mu_{1} y})dy+\dfrac{b_{1}(t)q(t)(k_{4}-1)}{1-\overline{\Phi}}  \bigg\}\\
				<&\overline{\Phi}^{2}(1-\overline{\Phi})
				\bigg\{ d_{1}(t)\int_\mathbb{R} J_{1}(y,t)S(\mu_{1}(\overline{c}),z,y,t)(2-e^{\mu_{1}(\overline{c}) y}-e^{-\mu_{1}(\overline{c}) y})dy+b_{1}(t)q(t)k_{4}       \bigg\} \\
				<& \overline{\Phi}^{2}(1-\overline{\Phi}) \bigg\{Y_{3}(\mu_1(\overline{c}),t)+b_{1}(t)q(t)k_{4}   \bigg\}\\
				\rightarrow& \overline{\Phi}^{2}(1-\overline{\Phi}) \bigg\{Y_{3}(\mu_1(0),t)+b_{1}(t)q(t)k_{4}   \bigg\}<0 \   \  \text{as} \  \  \overline{c}\rightarrow 0^-.
			\end{aligned}
		\end{eqnarray*}
To proceed, we give the estimates $ S(\mu_{1}(\bar{c}),z,y,t) $ for $z\in[z_{1}(t), z_{2}(t)]$ and $t>0$ by
		\begin{equation}\label{SSS}
			\left\{
			\begin{array}{lr}
				&1+\dfrac{1-e^{\mu_{1}(\bar{c}) y}}{\varphi_{1}(t)e^{\mu_{1}(\bar{c})z_{1}(t)}+e^{\mu_{1}(\bar{c})y}}
				\leqslant S(\mu_{1}(\bar{c}),z,y,t)\leqslant 1+\dfrac{1-e^{\mu_{1}(\bar{c}) y}}{\varphi_{1}(t)e^{\mu_{1}(\bar{c})z_{2}(t)}+e^{\mu_{1}(\bar{c})y}}
				 \   \text{for} \  y\geq0, \\[8mm]
				&1+\dfrac{1-e^{\mu_{1}(\bar{c}) y}}{\varphi_{1}(t)e^{\mu_{1}(\bar{c})z_{2}(t)}+e^{\mu_{1}(\bar{c})y}}
				\leqslant S(\mu_{1}(\bar{c}),z,y,t)\leqslant 1+\dfrac{1-e^{\mu_{1}(\bar{c}) y}}{\varphi_{1}(t)e^{\mu_{1}(\bar{c})z_{1}(t)}+e^{\mu_{1}(\bar{c})y}} \  \text{for} \  y\leq0.
			\end{array}
			\right.
		\end{equation}
		Thus for the second equation, we have
		\begin{eqnarray*}
			\begin{aligned}
				&d_{2}(t)\left(\int_\mathbb{R} J_{2}(y)\overline{\Psi}(z-y,t) dy -\overline{\Psi}\right)-\overline{c}\overline{\Psi}_{z}-\overline{\Psi}_{t}+(1-\overline{\Psi})\left[a_{2}(t)p(t)\overline{\Phi}-b_{2}(t)q(t)\overline{\Psi}\right] \\
				=&d_{2}(t)\left(\int_\mathbb{R} J_{2}(y)k_{4}\overline{\Phi}(z-y,t) dy -k_{4}\overline{\Phi}\right)-\overline{c}k_{4}\overline{\Phi}_{z}-k_{4}\overline{\Phi}_{t}+\overline{\Phi}(1-k_{4}\overline{\Phi})\left[a_{2}(t)p(t)
				-b_{2}(t)q(t)k_{4}\right]\\
				<& \overline{\Phi}(1-\overline{\Phi})\bigg\{\dfrac{k_{4}d_{2}(t)\left(\int_\mathbb{R} J_{2}(y)\overline{\Phi}(z-y,t) dy -\overline{\Phi}\right)}{\overline{\Phi}(1-\overline{\Phi})}-k_{4}\overline{c}\mu_{1}(\overline{c})-k_{4}\dfrac{\varphi_{1}'(t)}{\varphi_{1}(t)}
				\bigg\} \\
				=& \overline{\Phi}(1-\overline{\Phi})k_{4}\bigg\{ d_{2}(t)\int_\mathbb{R} J_{2}(y)S(\mu_{1}(\bar{c}),z,y,t)(1-e^{\mu_{1}(\overline{c})y})dy -\overline{c}\,\mu_{1}(\overline{c})-\dfrac{\varphi_{1}'(t)}{\varphi_{1}(t)}\bigg\} \\
				=& \overline{\Phi}(1-\overline{\Phi})k_{4}\bigg\{-Y_1(\mu_1(\overline{c}), t)-a_1(t)p(t)+b_1(t)q(t)\\
&~~~~~~~~~~~~~~~~+d_{2}(t)\int_\mathbb{R} J_{2}(y)\left(S(\mu_{1}(\bar{c}),z,y,t)+1\right)(1-e^{\mu_{1}(\overline{c})y})dy\bigg\} \\
				\leqslant&
				\overline{\Phi}(1-\overline{\Phi})k_{4}\bigg\{-Y_1(\mu_1(\overline{c}), t)-a_1(t)p(t)+b_1(t)q(t)\\
&~~~~~~~~~~~~~~~~+d_{2}(t)\int_\mathbb{R} J_{2}(y)\left(S(\mu_{1}(\bar{c}),z_{1}(t),y,t)+1\right)(1-e^{\mu_{1}(\overline{c})y})dy\bigg\}\\
				=&
				\overline{\Phi}(1-\overline{\Phi})k_{4}\bigg\{-Y_1(\mu_1(\overline{c}), t)-a_1(t)p(t)+b_1(t)q(t)+F\left(\mu_{1}(\overline{c}),s_{0},t\right)\bigg\}\\
				\rightarrow& \overline{\Phi}(1-\overline{\Phi})k_{4}\bigg\{-Y_1(\mu_1(0), t)-a_1(t)p(t)+b_1(t)q(t)+F\left(\mu_{1}(0),s_{0},t\right)\bigg\}<0 \  \text{as} \  \  \overline{c}\rightarrow 0^-.
			\end{aligned}
		\end{eqnarray*}
		where $F\left(\mu_{1}(\overline{c}),s_{0},t\right)$ is defined in (\ref{F}).
		For $z>z_2(t)$, we have $\overline{\Psi}=1$. Then from the first equation in (\ref{wave sys}) it follows that
		\begin{eqnarray*}
			\begin{aligned}
				&d_{1}(t)\left(\int_\mathbb{R} J_{1}(y)\overline{\Phi}(z-y,t) dy -\overline{\Phi}\right)-\overline{c}\overline{\Phi}_{z}-\overline{\Phi}_{t}+\overline{\Phi}\left[a_{1}(t)p(t)(1-\overline{\Phi})
				-b_{1}(t)q(t)(1-\overline{\Psi})\right] \\
				=&\overline{\Phi}(1-\overline{\Phi})
				\bigg\{\dfrac{d_{1}(t)\left(\int_\mathbb{R} J_{1}(y)\underline{\Phi}(z-y,t) dy -\overline{\Phi}\right)}{\overline{\Phi}(1-\overline{\Phi})}-\overline{c}\mu_{1}(\overline{c})-\dfrac{\varphi_{1}'(t)}{\varphi_{1}(t)}+a_{1}(t)p(t)\bigg\}\\
				=&\overline{\Phi}(1-\overline{\Phi})
				\bigg\{\dfrac{d_{1}(t)\left(\int_\mathbb{R} J_{1}(y)\overline{\Phi}(z-y,t) dy -\overline{\Phi}\right)}{\overline{\Phi}(1-\overline{\Phi})}-d_{1}(t)\left(\int_\mathbb{R } J_{1}(y)e^{-\mu_{1}(\overline{c})y}dy-1\right)+b_{1}(t)q(t)\bigg\}\\
				=&\overline{\Phi}^{2}(1-\overline{\Phi})
				\bigg\{\dfrac{d_{1}(t)\int_\mathbb{R} J_{1}(y)\left[\overline{\Phi}(z-y,t) -\overline{\Phi}-(e^{-\mu_{1}(\overline{c})y}-1)\overline{\Phi}(1-\overline{\Phi})\right]dy}{\overline{\Phi}^{2}(1-\overline{\Phi})} +\dfrac{b_{1}(t)q(t)}{\overline{\Phi}}\bigg\}\\
				<&\overline{\Phi}^{2}(1-\overline{\Phi})
				\bigg\{d_{1}(t)\int_\mathbb{R} J_{1}(y)S(\mu_{1}(\bar{c}),z,y,t)(2-e^{-\mu_{1}(\overline{c}) y}-e^{\mu_{1}(\overline{c}) y})dy +b_{1}(t)q(t)k_{4}\bigg\}\\
				<& \overline{\Phi}^{2}(1-\overline{\Phi}) \bigg\{Y_{3}(\mu_1(\overline{c}),t)+b_{1}(t)q(t)k_{4}   \bigg\}\\
				\rightarrow&  \overline{\Phi}^{2}(1-\overline{\Phi}) \bigg\{Y_{3}(\mu_1(0),t)+b_{1}(t)q(t)k_{4}   \bigg\} <0 \  \text{as} \  \  \overline{c}\rightarrow 0^-.
			\end{aligned}
		\end{eqnarray*}
		and the second equation in (\ref{wave sys}) becomes
		\begin{eqnarray}
				&d_{2}(t)\left(\int_\mathbb{R} J_{2}(y)\overline{\Psi}(z-y,t) dy -\overline{\Psi}\right)-\overline{c}\overline{\Psi}_{z}-\overline{\Psi}_{t}+(1-\overline{\Psi})\left[a_{2}(t)p(t)\overline{\Phi}-b_{2}(t)q(t)\overline{\Psi}\,\right]=0.\nonumber\\
		\end{eqnarray}
		Hence $(\overline{\Phi}, \overline{\Psi})$ is an upper solution of  (\ref{wave sys}) as $\overline{c}$ is sufficiently close to $0$. The proof is complete.
\end{proof}

\section{Examples and simulations}
In this section we present two examples to demonstrate the results of Theorems \ref{TH1} and \ref{TH2} when the condition (A2) is satisfied, but the condition (\ref{1.12a}) is not.

 Theorem \ref{TH1} indicates that the condition (\ref{TH1-cond1}) can guarantee that the bistable wave  speed is positive, that is, the bistable traveling wave connecting $(0, q(t) )$ to $(p(t),0)$ propagates to the left,  which means the stable state ($ p(t) , 0 $) wins the competition, and thus the species $ u $ will tend to the periodic state $ p(t) $ and the species $ v $ will tend to  extinction as time increases.

 Theorem \ref{TH2} shows that if there exists a constant $s_0$ such that both (\ref{TH2-cond1}) and (\ref{TH2-cond2}) are satisfied, the bistable wave  speed is negative. Therefore, with the increase of the time, the species $ u $ will become extinct and the species $ v $ will approach the periodic state $ q(t)$.

 In the two examples, the kernel functions $J_i,i=1,2$ are taken as
 \begin{equation}\label{J}
        J_{1}(y)=J_{2}(y)=\dfrac{1}{\sqrt{2\pi}}e^{-\frac{y^2}{2}},\quad -\infty<y<+\infty.
    \end{equation}
 It is easy to verify that $J_i,i=1,2$  satisfy (\textbf{A1})-(\textbf{A3}). The simulation is to directly integrate the full system  (\ref{original_model}) with the initial data
   \begin{equation}
	u(x,0)=\dfrac{p_{0}}{1+e^{-x}},\quad v(x,0)=\dfrac{q_{0}}{1+e^{x}},
\end{equation}
where $ p_{0} $ and $ q_{0} $ are defined in (\ref{p(t)}).

 In Example 1,  the coefficient functions are taken as
\begin{equation}\label{coeff_1}
	\begin{aligned}
		&d_{1}(t)=10, \quad r_{1}(t)=3.5, \quad a_{1}(t)=3\sin(2t)+5, \quad \;\; b_{1}(t)=3\sin(2t)+10;\\
		&d_{2}(t)=15, \quad r_{2}(t)=3, \quad\;\;\, a_{2}(t)=3\cos(2t)+15, \quad b_{2}(t)=3\cos(2t)+8.\\
	\end{aligned}
\end{equation}
Then it is easy to check that (\ref{A2})(not (\ref{1.12a})) and the condition (\ref{TH1-cond1}) in Theorem \ref{TH1} are satisfied. The propagation behavior of the $\pi$-periodic bistable traveling wave  is displayed in Fig 1.
\begin{figure}[H]
	\centering
	\includegraphics[scale=0.60]{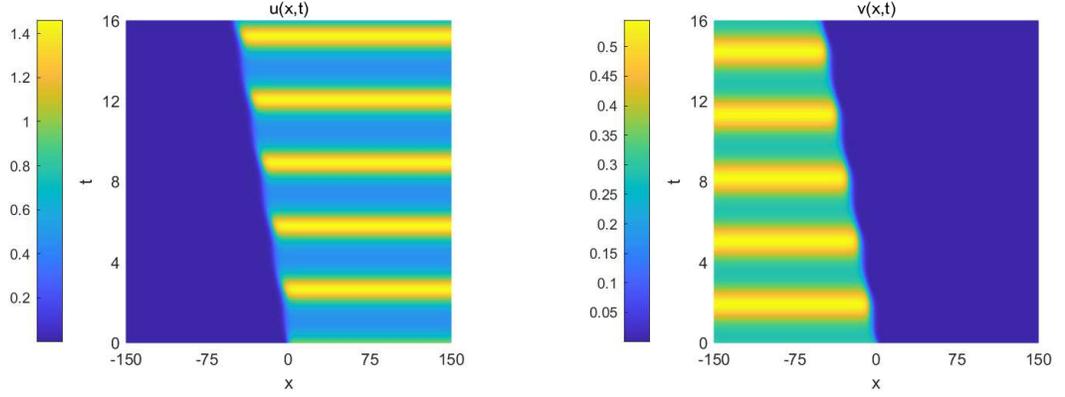}
    \captionsetup{font={small},labelfont=bf,labelformat={default},name={Fig.},labelsep=space}
	\caption{Top views of the $\pi$-periodic bistable wave of (\ref{original_model}). The coefficient functions are the same as in (\ref{coeff_1}). The initial data are $u(x,0)=\frac{0.7660}{1+e^{-x}}$ and $v(x,0)=\frac{0.2977}{1+e^{x}}$.}
	\label{Fig.1}
\end{figure}

 In Example 2,  the coefficient functions are chosen as
\begin{equation}\label{coeff_2}
	\begin{aligned}
		&d_{1}(t)=100, \quad r_{1}(t)=3, \quad \;\;\: a_{1}(t)=3\sin(2t)+14, \quad\;\;\: b_{1}(t)=5\sin(2t)+35;\\
		&d_{2}(t)=120, \quad r_{2}(t)=3.4, \quad a_{2}(t)=1.5\cos(2t)+16, \quad b_{2}(t)=1.5\cos(2t)+6.\\
	\end{aligned}
\end{equation}
If we take $s_{0}=0.81$, then we can verify that (\ref{A2}) (not (\ref{1.12a})), and the conditions (\ref{TH2-cond1}) and (\ref{TH2-cond2}) in Theorem \ref{TH2} hold. The dynamical behavior of the $\pi$-periodic bistable traveling wave is displayed in Fig. 2.
\begin{figure}[H]
	\centering
	\includegraphics[scale=0.60]{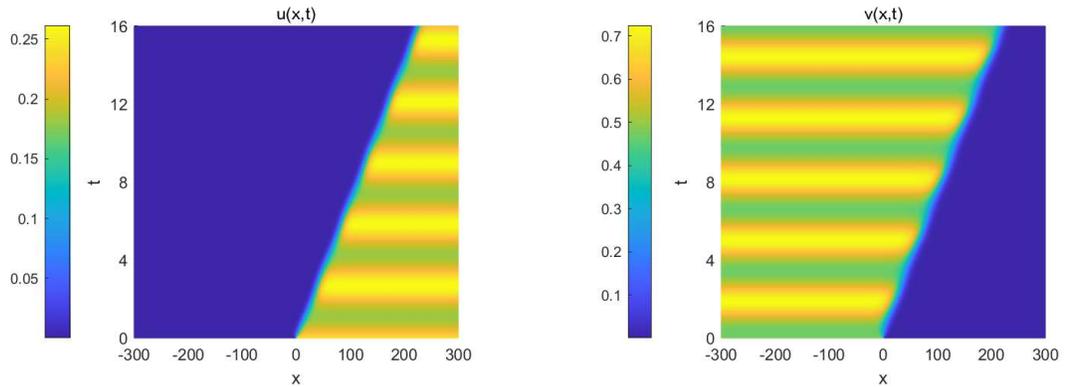}
    \captionsetup{font={small},labelfont=bf,labelformat={default},name={Fig.},labelsep=space}
	\caption{Top views of the $\pi$-periodic bistable wave of (\ref{original_model}). The coefficient functions are shown in (\ref{coeff_2}) and $s_0=0.81$. The initial data are $u(x,0)=\frac{0.2378}{1+e^{-x}}$ and $v(x,0)=\frac{0.4779}{1+e^{x}}$.}
	\label{2}
\end{figure}

\section{Conclusion and discussion}
In this work, we have studied  the Lotka-Volterra type of  competition model with nonlocal dispersal and time periodicity.
By applying the theory of monotone dynamical systems, we prove the existence and monotonicity of the bistable $T$-periodic traveling wave solution. The uniqueness, Lyapunov stability and the value range of the wave speed  have been established mainly by means of the comparison principle  (the upper and lower solution method). Based on these generic results and the characteristics of the bistable waves, we derive explicit conditions for the speed sign, i.e., Theorems \ref{TH1} and \ref{TH2} guarantee the positive and negative  wave speeds, respectively. Moreover, numerical simulations demonstrate our theoretical results even under weak bistable conditions, which reveal the effects of dispersal rate, competition strength, growth rate, seasonality and carrying capacity on the propagation direction of the bistable traveling wave.

It should be pointed out that the monotonicity of the wave speed in terms of the function $b_1(t)$ and $a_2(t)$ can be easily shown by way of comparison principle. However, a complete classification of the speed sign in terms of all parameters is a challenge. As such, we are particularly interested in obtaining analytic and esay-to-apply formulas for determining the speed sign.  Our explicit results are derived by constructing  upper/lower solutions with the asymptotical behavior (\ref{ap1}) which can be seen as case studies,  sheding light on further studies and improvement. We expect that different explicit conditions could be obtained by finding different formulas of upper/lower solutions with the asymptotical behaviors similar to (\ref{ap2}), (\ref{ap3}) and (\ref{ap4}), respectively. The exponential stability of the bistable traveling wave of the system (\ref{original_model}) has been presented in another work. In addition, the condition (\ref{1.12a}) is required only for the existence of traveling waves, while the weaker condition (\ref{A2}) (i.e, the bistable condition) is sufficient for other results. Hence we presume that the existence result developed in  \cite{Fang} (i.e., Lemma \ref{existence lemma}) could be improved.

\section{Data availability}
The simulation code and data are available upon request from the authors. No other data are used. 

\section{Conflict of interest statement}
All authors declare that they have no conflicts of interest.

\vspace{0.5cm}
\textbf{Acknowledgement}.
The work of Manjun Ma, Wentao Meng and Jiajun Yue was supported  by the National Natural Science Foundation of China (No.~12071434,  No.~11671359).
The work of Chunhua Ou  was  supported by the NSERC discovery grants(RGPIN-2016-04709 and  RGPIN-2022-03842).

\end{document}